\documentclass[10pt]{siamltex}
\usepackage{amsmath,amsfonts,amscd,amssymb,bm,cite}
\usepackage{mathrsfs,listings,url}
\usepackage{graphics,color,ulem}
\usepackage{float}
\usepackage[titletoc,page]{appendix}
\usepackage{subfigure}
\usepackage[colorlinks=true, bookmarksopen,
pdfauthor={CST},
pdfcreator={pdftex},
pdfsubject={algorithms},
linkcolor={blue},
anchorcolor={black},
citecolor={firebrick},
filecolor={magenta},
menucolor={black},
pagecolor={red},
plainpages=false,pdfpagelabels,
urlcolor={db} ]{hyperref}
\usepackage[capitalise]{cleveref}
\usepackage{comment}
\usepackage{verbatim}
\usepackage{marginnote}
\usepackage{float}
\usepackage[makeroom]{cancel}
\usepackage[pdftex]{graphicx}
\usepackage[section]{placeins}
\usepackage{mathptmx}
\usepackage{cases}
\usepackage{subeqnarray}
\usepackage{hhline}
\usepackage[linesnumbered,ruled,vlined]{algorithm2e}
\usepackage{xcolor}
\usepackage{tikz}
\usepackage{caption} 
\usepackage{lmodern}

\usepackage{comment}
\usepackage{comment}
\excludecomment{confidential}
\usepackage{mathtools}
\usepackage[inline]{enumitem}

\counterwithin{table}{section}
\counterwithin{figure}{section}
\graphicspath{{figures/}}

\newcommand{\vertnorm}[1]{{\big|\kern-0.25ex\big|\kern-0.25ex\big| #1 
    \big|\kern-0.25ex\big|\kern-0.25ex\big|}}

\setcounter{MaxMatrixCols}{10}

\usetikzlibrary{arrows}
\usetikzlibrary{positioning,shapes,snakes,calc,decorations,decorations.markings}

\restylefloat{table}
\allowdisplaybreaks
\overfullrule=0pt

\setcounter{topnumber}{2}
\setcounter{bottomnumber}{2}
\setcounter{totalnumber}{4}
\setcounter{dbltopnumber}{2}

\definecolor{red}{rgb}{0.0, 0.0, 0.0}
\definecolor{db}{rgb}{0.0470,0,0.5294}
\definecolor{dg}{rgb}{0.0,0.392,0.0}
\definecolor{firebrick}{rgb}{0.698,0.133,0.133}
\definecolor{bl}{rgb}{0.0,0.0,0.0}
\definecolor{linen}{rgb}{0.980,0.941,0.902}
\definecolor{ivory}{rgb}{1.0,1.0,0.941}
\definecolor{aliceblue}{rgb}{0.941,0.973,1.0}
\definecolor{beige}{rgb}{0.961,0.961,0.863}
\definecolor{tan}{rgb}{0.824,0.706,0.549}
\definecolor{lightsteelblue}{rgb}{0.690,0.769,0.871}
\definecolor{paleturquoise}{rgb}{0.686,0.933,0.933}
\definecolor{lightblue}{rgb}{0.678,0.847,0.902}
\definecolor{skyblue}{rgb}{0.529,0.808,0.922}
\definecolor{palegoldenrod}{rgb}{0.933,0.910,0.667}
\definecolor{lightgoldenrod}{rgb}{0.933,0.867,0.510}
\definecolor{lightyellow}{rgb}{1.0,1.0,0.878}
\definecolor{yellow}{rgb}{1.0,1.0,0.0}
\definecolor{lightyellow1}{rgb}{1.0,1.0,0.878}
\definecolor{lemonchiffon}{rgb}{1.0,0.980,0.804}
\definecolor{myyellow}{rgb}{1,1,.9}
\definecolor{darkgreen}{rgb}{0.0,0.392,0.0}
\definecolor{darkviolet}{rgb}{0.580,0.0,0.827}
\definecolor{lightsalmon}{rgb}{1.0,0.627,0.478}
\definecolor{orange}{rgb}{1.0,0.647,0.0}
\definecolor{darkblue}{rgb}{0.00,0.00,0.55}
\numberwithin{equation}{section}

\crefname{table}{table}{tables}
\Crefname{table}{Table}{Tables}

\crefname{figure}{figure}{figures}
\Crefname{figure}{Figure}{Figures}

\begin{document}
	
\title{Partitioned Conservative, Variable Step, Second-Order Method for Magneto-hydrodynamics In Els\"asser Variables} 
\author{Zhen Yao\thanks{Department of Mathematics, University of Pittsburgh, Pittsburgh, PA 15260, USA. Email: \href{mailto:zhy@pitt.edu}{zhy@pitt.edu}.}
\and
Catalin Trenchea\thanks{Department of Mathematics, University of Pittsburgh, Pittsburgh, PA 15260, USA. Email: \href{mailto:trenchea@pitt.edu}{trenchea@pitt.edu}.}
\and
Wenlong Pei\thanks{Department of Mathematics, The Ohio State University, Columbus, OH 43210,USA. Email: \href{mailto:pei.176@osu.edu}{pei.176@osu.edu}.} 
}
\date{\emty}
\maketitle
	
\begin{abstract}
 Magnetohydrodynamics (MHD) describes the interaction between electrically conducting fluids and electromagnetic fields. 
		We propose and analyze a symplectic
		, second-order 
		algorithm for the evolutionary MHD system in Els\"asser variables.
        We reduce the computational cost of the iterative non-linear solver, at each time step, 
        by partitioning the coupled system into two subproblems of half size, solved in parallel. 
	We prove that the 
	iterations converge linearly
	, under a 
	time step restriction similar to the one required in the full space-time error analysis. 
The variable step algorithm 
unconditionally conserves the energy, cross-helicity and magnetic helicity, and numerical solutions are second-order accurate 
in the $L^{2}$ and $H^{1}$-norms.
        The time adaptive mechanism, based on a local truncation error criterion, helps the variable step algorithm balance accuracy and time efficiency. 
        Several numerical tests 
        support the theoretical findings and verify the advantage of time adaptivity.		
\end{abstract}

\begin{keywords}
Magnetohydrodynamics, Els\"asser variables, partitioned algorithms, iterative methods, second-order accurate, variable steps, time adaptivity
\end{keywords}
	
\begin{AMS}
76W05, 65M12, 65M60, 35Q30, 35Q61 
\end{AMS}
	
\section{Introduction}
    The equations of magnetohydrodynamics \cite{Alfv42} (MHD) 
    describe the motion of electrically conducting, incompressible flows in the presence of magnetic fields. 
    The understanding of the MHD system is essential to numerous applications in science and engineering, 
    astrophysics \cite{MR2328834,MR2549136,MR2729936}, 
    geophysics \cite{MR2360172,SHA15_BPAS}, sea water propulsion \cite{lin1990sea}, cooling system design \cite{Barleon1991401,Hidetoshi20061431} and process metallurgy \cite{MR1825486}. 
    Given a bounded domain $\Omega \subset \mathbb{R}^{d}$ ($d = 2,3$) and time interval $[0,T]$, the fluid velocity field $u$, magnetic field $B$ and the 
    pressure $p$ 
    satisfy
    \begin{align}
        &\partial_{t}u + (u \cdot \nabla) u - (B \cdot \nabla) B - \nu \Delta u + \nabla p = f, \qquad \nabla \cdot u = 0, 
        \label{eq:NSE-MHD} \\
        &\partial_{t}B + (u \cdot \nabla) B - (B \cdot \nabla) u - \nu_{m} \Delta B = 0, \qquad
        \ \ \ \ \nabla \cdot B = 0,
        \label{eq:Maxwell-MHD}
    \end{align}
with 
Dirichlet boundary conditions: $u|_{\partial{\Omega}} = 0$, $B|_{\partial{\Omega}} = B_{\circ}$.
Here 
$\nu$ is the kinematic viscosity, $\nu_{m}$ is the magnetic resistivity, 
$B_{\circ}$ is 
a constant external magnetic field,
and $f$ is an external force. 
The MHD flows entail two distinct physical processes:  the motion of fluid - 
governed by the hydrodynamics equations \eqref{eq:NSE-MHD}, 
and its interaction with the magnetic fields - governed by the Maxwell equations \eqref{eq:Maxwell-MHD}. 
Splitting the magnetic field $B$ into its mean and fluctuation $B = B_{\circ} + b$, 
the Els\"asser variables \cite{0037.28802}
    \begin{gather*}
        z^{+} = u + b, \qquad \qquad z^{-} = u - b, 
    \end{gather*}
merge the physical properties of the Navier-Stokes 
and Maxwell equations. 
The Els\"asser variables, used in plasma turbulence, differentiate between the MHD waves propagating parallel or anti-parallel to the main magnetic field $B_\circ$.
The MHD equations \eqref{eq:NSE-MHD}-\eqref{eq:Maxwell-MHD} are equivalently written in terms of the Els\"asser variables as
    \begin{align}
        \begin{cases}
            \partial_{t} z^{\pm} \mp ( B_{\circ} \cdot \nabla ) z^{\pm} 
            + (z^{\mp} \cdot \nabla) z^{\pm} - \nu^{+} \Delta z^{\pm} 
            - \nu^{-} \Delta z^{\mp} + \nabla p = f ,
\\
\nabla \cdot z^{\pm} = 0,
        \end{cases}
        \label{eq:MHD-Elsa}
    \end{align}
    and $z^{\pm} = 0$ on $\partial{\Omega}$. 
Here $\nu^{+} = (\nu + \nu_{m})/2$ and $\nu^{-} = (\nu - \nu_{m})/2$.
The nonlinear interactions between the Alfvénic fluctuations $z^\pm$,
via the cross-coupling term $(z^{\mp} \cdot \nabla) z^{\pm}$
,
are the basis of the Alfv\'en effect, a fundamental interaction process \cite{MR575264,galtier-2000,1995ApJ...438..763G,Krai65}. 

The numerical simulation of the fully-coupled system \eqref{eq:MHD-Elsa} is computationally challenging.
The monolithic methods or implicit fully-coupled algorithms in the $u,B$ variables \cite{MR3011862}, which assemble the coupled system at each time step and solve it iteratively,
are stable and accurate, but quite demanding in computational speed and storage.

The implicit-explicit (IMEX) partitioned methods \cite{MR3831959,MR4176075}, at each time step, decouple the computations and treat the subphysics/subdomain 
variables implicitly in time,  while the coupling terms are evaluated explicitly in time (by lagging or extrapolating values), hence solving two subproblems in parallel. 
The 
IMEX decoupling highly reduces the computational complexity, but often introduces a Lyapunov-type instability.
  
Inhere we propose 
an algorithm which, at each time step, partitions the 
computations 
via an iterative process, and prove that the iterations linearly converge to the solution of the fully-coupled implicit monolithic method.
%
The fully-coupled implicit monolithic method is the variable step, one-step, second-order, symplectic, midpoint method.
\\
Given $z_{n}^{\pm}$ at time $t_{n}$, the fully-coupled scheme solves for $z^{\pm}$ at time $t_{n+1}$ by
    \begin{align}
    	\begin{cases}
    		\displaystyle \frac{z_{n+1}^{\pm} - z_{n}^{\pm}}{\tau_{n}} \mp \big( B_{\circ} \cdot \nabla \big) z_{n+1/2}^{\pm} 
    		+ \big( z_{n+1/2}^{\mp} \cdot \nabla \big) z_{n+1/2}^{\pm}  
\\ 
\qquad
    		- \nu^{+} \Delta z_{n+1/2}^{\pm}  
		- \nu^{-} \Delta z_{n+1/2}^{\mp} 
    		+ \nabla p_{n+1/2}^{\pm} = f(t_{n+1/2}),  
\\
\nabla \cdot z_{n+1/2}^{\pm} = 0, 
    	\end{cases}
    	\label{eq:CN-MHD}
    \end{align}
where $t_{n+1/2} = (t_{n+1}+t_{n})/2$, 
$\tau_{n} = t_{n+1} - t_{n}$ is the local time step,
and 
$z_{n+1/2}^{\pm}$, 
$p_{n+1/2}^{\pm}$ approximate  $z^{\pm}$, $p$ at $t_{n+1/2}$, respectively. 
The implementation of 
the midpoint method 
\eqref{eq:CN-MHD} can be simplified by 
refactorization 
into a
backward Euler (BE) solve on the first half-interval, 
and a linear extrapolation (post-process):  \\
    \textit{Step 1}. BE solver for $ z_{n+1/2}^{\pm}$ on $[t_{n}, t_{n+1/2}]$ 
    \begin{align}
    \begin{cases}
    \displaystyle \frac{z_{n+1/2}^{\pm} - z_{n}^{\pm}}{\tau_{n}/2} \mp \big( B_{\circ} \cdot \nabla \big) z_{n+1/2}^{\pm} 
    + \big( z_{n+1/2}^{\mp} \cdot \nabla \big) z_{n+1/2}^{\pm}  
\\ 
\qquad
- \nu^{+} \Delta z_{n+1/2}^{\pm}  
- \nu^{-} \Delta z_{n+1/2}^{\mp} 
    + \nabla p_{n+1/2}^{\pm} = f(t_{n+1/2}),  
\\
\nabla \cdot z_{n+1/2}^{\pm} = 0.
    \end{cases}
    \label{eq:BE-MHD}
    \end{align}
    \textit{Step 2}. Post-process for $ z_{n+1}^{\pm}$: $z_{n+1}^{\pm} = 2 z_{n+1/2}^{\pm} - z_{n}^{\pm}$.\\
The post-process is equivalent to the forward Euler (FE) solver on $[t_{n+1/2}, t_{n+1}]$
\cite{MR4092601}.


We 
decouple
the implicit monolithic midpoint method \eqref{eq:CN-MHD} by 
partitioning
via iterations in the refactorized implicit BE solver \eqref{eq:BE-MHD} for $z_{n+1/2}^{\pm}$. 
    With an initial guess $z^{\pm}_{(0)} = \frac{3}{2}z^{\pm}_{n} - \frac{1}{2}z^{\pm}_{n-1}$,
    we seek
    $z^{\pm}_{(k)}$ and $p_{(k)}$ such that
    \begin{align}
        \begin{cases}
        \displaystyle \frac{z_{(k)}^{\pm} - z_{n}^{\pm}}{\tau_{n}/2} \mp \big( B_{\circ} \cdot \nabla \big) z_{(k)}^{\pm} 
        + \big( z_{(k-1)}^{\mp} \cdot \nabla \big) z_{(k)}^{\pm}  
        \\ 
        \displaystyle 
     \qquad
     - \nu^{+} \Delta z_{(k)}^{\pm}  
     - \nu^{-} \Delta z_{(k-1)}^{\mp} 
        + \nabla p_{(k)}^{\pm} = f(t_{n+1/2}),
    \\
    \nabla \cdot z_{(k)}^{\pm} = 0.
        \end{cases}
        \label{eq:BE-iter}
    \end{align}
In Theorem \ref{themreo_1}
we prove 
    that the sequence $\{ z_{(k)}^{\pm} \}_{k=0}^{\infty}$ in the BE partitioned iteration \eqref{eq:BE-iter} 
converges linearly
to $z_{n+1/2}^{\pm}$, strongly in $H^1(\Omega)$,
    provided the time step $\tau_{n}$ is small enough.
The BE partitioned iteration \eqref{eq:BE-iter} is 
    computationally efficient \cite{MR3111615}, as it decouples the cross-coupling terms $(z^{\mp} \cdot \nabla) z^{\pm}$.
    Due to the fact that the symplectic, second-order accurate, one-step
    implicit midpoint method conserves all quadratic Hamiltonians \cite{MR1755178,MR1429934,MR4092601}, the 
    algorithm 
    \eqref{eq:CN-MHD}
    conserves 
\begin{itemize}
        \item the model energy: $\displaystyle \mathcal{E}(t) =  \frac{1}{2}\int_{\Omega} (|u|^2 + |B|^{2}) \, dx$,
        \item the cross-helicity: $\displaystyle \mathcal{H}_{C}(t) = \frac{1}{2}\int_{\Omega} u \cdot B \, dx$,
        \item the magnetic-helicity: $\displaystyle \mathcal{H}_{M}(t) = \frac{1}{2}\int_{\Omega} \mathbb{A} \cdot B \, dx$ for the vector potential $\mathbb{A}$ such that $B = \nabla \times \mathbb{A}$,
\end{itemize}
    and its numerical solutions 
are second-order accurate in time.
The midpoint rule
is a special member of the robust Dahlquist-Liniger-Nevanlinna (DLN) family of one-leg second-order variable step  methods
\cite{MR714701,MR4342399,LPT23DLNadaptive,Pei24_NM,MR4501942}, 
which are unconditionally stable for any arbitrary sequence of time steps.
Also having the smallest error constant in the DLN family makes the midpoint method an ideal choice for time adaptivity, 
based on various criteria (minimum numerical dissipation, global error 
or local truncation error (LTE)). 
    Time adaptivity is an effective approach 
    which
    balances 
    computational costs and accuracy in the simulation of many stiff ordinary differential equations and fluid models \cite{pub:27216,MR4501942,LPT23DLNadaptive,Pei24_NM,Pei24_arXiv,SP23_arXiv}.  
    Here we utilize the explicit AB2-like method (a 
    variant of the two-step Adams-Bashforth method 
    \cite{MR4092601}) to estimate the LTE of the algorithm \eqref{eq:CN-MHD}, with 
    negligible extra computational costs. 
	This adaptive mechanism achieves outstanding performance in numerous highly stiff evolutionary equations \cite{MR4432611}
	 and thus deserves a fair trial in the simulations of MHD flows. 

    The rest of report is organized as follows. 
In Section \ref{sec:preliminaries} we give some  necessary notations and preliminary results.
In Section \ref{sec:num-analysis} we present detailed numerical analysis of the implicit fully-decoupled algorithm \eqref{eq:CN-MHD} with the partitioned BE iterations \eqref{eq:BE-iter}.
In subsection \ref{subsec:Parti-Iter} we prove that the sequence $\{z_{(k)}^{\pm} \}_{k=0}^{\infty}$ in \eqref{eq:BE-iter} converges linearly to  $z_{n+1/2}^{\pm}$ in \eqref{eq:BE-MHD}, strongly in $H^1(\Omega)$. 
We show in subsection \ref{subsec:Conservation} 
	that the implicit algorithm \eqref{eq:CN-MHD} is unconditionally stable, and 
	(in the ideal case)
conserves the quadratic Hamitonians: 
energy, cross-helicity and magnetic-helicity. 
The variable step error analysis for 
the Els\"asser variables $z^{\pm}$ 
is provided in subsection \ref{subsec:Err-analy}. 
%
Several numerical tests are presented in Section \ref{sec:Num-Tests} to support main results of the report. 
We verify that the algorithm \eqref{eq:CN-MHD}-\eqref{eq:BE-iter}  is second-order accurate and 
conserves over long time 
the model energy, the cross-helicity and the magnetic-helicity,
on the
two-dimensional travelling wave problem \cite{WiLaTr15}, and
    Hartmann flows \cite{MR2289481}. 

    The efficiency of time adaptivity using a LTE criterion is confirmed on these two examples, endowed a with 
    Lindberg type time component \cite{Lin74_BIT}.

    \section{Preliminaries and Notations}
	\label{sec:preliminaries}
Let $\Omega \subset \mathbb{R}^{d}$ ($d = 2,3$) be a bounded domain, and denote by
    $L^{p}(\Omega)$ the Banach space of Lebesgue measurable function $f$ such that $|f|^{p}$ is integrable. 
We denote by $H^{\ell}(\Omega)$ 
    is the usual Sobolev space $W^{\ell,2}(\Omega)$ with norm $\| \cdot \|_{\ell}$ and semi-norm $| \cdot |_{\ell}$.
In the case $\ell = 0$, $H^0(\Omega)$ reduces to the $L^{2}(\Omega)$ Hilbert space, with norm $\| \cdot \|$ and inner product 
	$( \cdot, \cdot )$. 
The function spaces $X$ for Els\"asser variables $z^{\pm}$ and $Q$ for pressure $p$ are 
\begin{gather*}
X = \big(H^1_0(\Omega) \big)^d 
= \Big\{v \in \big( H^1(\Omega) \big)^d : v|_{\partial \Omega} = 0 \Big\}, \qquad Q = \Big\{q \in L^2(\Omega) : (q, 1) = 0 \Big\}.
\end{gather*}
The divergence-free space for $X$ is 
    \begin{gather*}
        V = \Big\{v \in X : \big( \nabla \cdot v , q \big) = 0, \quad \forall q \in Q \Big\},
    \end{gather*}
 and let    
$X'$ denote the dual space of $X$ with the dual norm 
    \begin{gather}
        \big\|f\big\|_{-1} := \sup_{v \in X, \ v \neq 0} \frac{ (f,v)}{\big\|\nabla v\big\|}, \quad \forall f \in X'.
        \label{eq:dual-norm}
    \end{gather}
For spatial discretization we use the finite element method.
    Let $\{ \mathcal{T}_{h} \}$ be a family of edge-to-edge triangulation with diameter $h \in (0,1)$.  
    We denote $X^h \subset X$ and $Q^h \subset Q$ as certain finite element spaces for Els\"asser variables $z^{\pm}$ and pressure $p$ respectively. 
    The divergence-free space for $X^{h}$ is 
    \begin{gather*}
		V^{h} := \Big\{ v^{h} \in X^{h}: \big( q^{h}, \nabla \cdot v^{h} \big) = 0, \ \ 
		\forall q^{h} \in Q^{h} \Big\}.
	\end{gather*}
We assume that $X^{h}$ is the $C^{m}$-space containing polynomials of highest degree $r$
, 
$Q^{h}$ is the $C^{m}$-space containing polynomials of highest degree $s$, 
and we have the following approximations for $v \in (H^{r+1})^{d} \cap X$ 
and $q \in H^{s+1} \cap Q$ 
    \begin{equation}
        \label{eq:approx-thm}
	    \begin{aligned}
		\inf_{v^{h} \in X^{h}} \| v - v^{h} \|_{\ell_{1}} &\leq C_{A}^{\ell_{1},r} h^{r+1-\ell_{1}} |v|_{r+1}, \ \ 
		0 \leq  \ell_{1} \leq \min\{ m+1, r+1 \},  \\
		\inf_{q^{h} \in Q^{h}} \| q - q^{h} \|_{\ell_{2}} &\leq C_{A}^{\ell_{2},s} h^{s+1-\ell_{2}} |q|_{s+1}, \ \ \ 
		0 \leq  \ell_{2} \leq \min\{ m+1, s+1 \},
	    \end{aligned}
    \end{equation}
where the constants $C_{A}^{\ell_{1},r}, C_{A}^{\ell_{2},s}>0$ are independent of $h$
(see e.g., \cite{MR1278258,MR1930132}).
We also recall the inverse inequality for $X^{h}$ is 
	\begin{gather}
		|v^{h}|_{1} \leq C_{I} h^{-1} \| v^{h} \|, \qquad \forall v^{h} \in X^{h}.
		\label{eq:inv-inequal}
	\end{gather}
for $C_{I} > 0$ independent of $h$. 
To ensure the robustness of the fully discrete algorithm, we assume that $X^h$ and $Q^h$ satisfy the discrete inf-sup condition (Ladyzhenskaya-Babu\v{s}ka-Brezzi condition)
    \begin{gather}
		\inf_{q^{h} \in Q^{h}} \sup_{v^{h} \in X^{h}} \frac{ \big( \nabla \cdot v^{h}, q^{h} \big)}{ \| \nabla v^{h} \| \| q^{h} \| } \geq \beta_{\tt{is}} > 0,
		\label{eq:inf-sup-cond}
	\end{gather}
    where $\beta_{\tt{is}}$ is independent of $h$.
The Taylor-Hood ($\mathbb{P}$2-$\mathbb{P}$1) space and the Mini element space are typical examples meeting this criterion.
For any pair $(u,p) \in V \times Q$, the Stokes projection $\big( I_{\rm{St}} u, I_{\rm{St}} p \big) \in V^{h} \times Q^{h}$ is defined to be the unique solution to the following Stokes problem 
	\begin{gather}
		\begin{cases}
			\big( \nabla u - \nabla I_{\rm{St}} u, \nabla v^{h} \big) = \big( p - I_{\rm{St}} p , \nabla \cdot v^{h} \big) \\
			\qquad \quad \  - \big( q^{h}, \nabla \cdot I_{\rm{St}} u \big) = 0
		\end{cases}, 
		\quad \forall (v^{h}, q^{h}) \in X^{h} \times Q^{h}.
		\label{eq:Stokes-def}
	\end{gather}
We recall  that if the pair $(X^{h}, Q^{h})$ satisfies the discrete inf-sup condition in \eqref{eq:inf-sup-cond}, 
the following approximation property of the Stokes projection 
 \cite{MR851383,MR3561143} holds
    \begin{equation}
        \label{eq:Stoke-Approx}
        \begin{aligned}
            | u - I_{\rm{St}} u |_{1} \leq& 
            2 \Big( 1 + \frac{1}{\beta_{\rm{is}}} \Big) \inf_{v^{h} \in X^{h}} | u - v^{h} |_{1}
            + \inf_{q^{h} \in Q^{h}} \| p - q^{h} \|.
        \end{aligned}
    \end{equation}
In the error analysis we will make use of the skew-symmetric nonlinear operator 
    \begin{gather}
        \mathcal{N}(u,v,w) = \frac{1}{2} \big(u \cdot \nabla v, w \big) - \frac{1}{2} \big(u \cdot \nabla w, v \big), \qquad \forall u, v, w \in \big( H^1(\Omega) \big)^d,
        \label{eq:N-form}
    \end{gather}
which can also be written as
    \begin{align}
        \mathcal{N}(u,v,w) = \big(u \cdot \nabla v, w \big) + \frac{1}{2} \big( (\nabla \cdot u) v, w \big), \quad \forall u, v, w \in X.
        \label{eq:N-form-equi}
    \end{align}
Finally, we 
    recall the following 
    Sobolev embedding
and
Ladyzhenskaya inequality 
     \cite[p. 52]{MR2808162}
        \begin{align}
            \| u \|_{L^{3}(\Omega)} &\leq \Big( \frac{ 1}{d^{1/2}} \Big)^{d/6} 
            \| u \|^{1-d/6} \| \nabla u \|^{d/6},
            \label{eq:Ladyzh-L3}  \\
            \| u \|_{L^{4}(\Omega)} &\leq \Big( \frac{ 2(d-1) }{d^{3/2}} \Big)^{d/4} 
            \| u \|^{1-d/4} \| \nabla u \|^{d/4}, 
            \qquad \forall u \in \big( H^{1}(\Omega) \big)^{d}.
            \label{eq:Ladyzh} 
        \end{align}

\begin{confidential}
    \color{darkblue}
    For $u \in X$, 
        \begin{align*}
            \| u \|_{L^{6}} &\leq \Big( \frac{ 3(d-1) }{d^{3/2}} \Big)^{d/4}
            \| u \|^{1-d/3} \| \nabla u \|^{d/3} \\
            &\leq \Big( \frac{ 3(d-1) }{d^{3/2}} \Big)^{d/4} \big( C_{P} \| \nabla u \| \big)^{1-d/3} \| \nabla u \|^{d/3} \\
            &\leq C_{P}^{1-d/3} \Big( \frac{ 3(d-1) }{d^{3/2}} \Big)^{d/4} \| \nabla u \|,
        \end{align*}
        where $C_{P}$ is coefficient of Poincar\'e inequality. 
        \begin{align*}
            \| \nabla u \|_{L^{3}} &\leq \Big( \frac{ 1}{d^{1/2}} \Big)^{d/6}
            \| \nabla u \|^{1-d/6} \| \nabla ( \nabla u ) \|^{d/6} 
            \leq \Big( \frac{ 1}{d^{1/2}} \Big)^{d/6} \| u \|_{2}.
        \end{align*}
        Then we prove \eqref{eq:Ladyzh-Linfty}. For simplicity, we let $\Omega = \mathbb{R}^{d}$ and prove the case $u \in C_{c}^{1} (\mathbb{R}^{d})$ first. Let $Q \subset \mathbb{R}^{d}$ be an open cube containing $0$, whose sides of length $r$ are parallel to the coordinate axes. For any $x \in Q$, and any $p>d$,
        \begin{align*}
            | u(x) - u(0)| =& \Big| \int_{0}^{1} \frac{d}{dt} u(tx) dt \Big|
            \leq \int_{0}^{1} \sum_{i=1}^{d} |x_{i}| | \partial_{i} u(tx)| dt  
            \leq \int_{0}^{1} |x| |\nabla u (tx)| dt  \\
            \leq& \sqrt{d} r \int_{0}^{1} |\nabla u (tx)| dt, \\
            | \bar{u} - u(0) | =& \Big| \frac{1}{|Q|} \int_{Q} u(x) dx  
            - \frac{1}{|Q|} \int_{Q} u(0) dx \Big| 
            \leq \frac{1}{|Q|} \Big| \int_{Q} | u(x) - u(0)| dx \Big| \\
            \leq& \frac{\sqrt{d} r}{|Q|} \int_{0}^{1} \Big( \int_{Q} |\nabla u (tx)| dx \Big) dt 
            = \frac{\sqrt{d} r}{|Q|} \int_{0}^{1} \frac{1}{t^{d}} \Big( \int_{tQ} |\nabla u (y)| dy \Big) dt \\
            \leq& \frac{\sqrt{d} r}{r^{d}} \int_{0}^{1} \frac{1}{t^{d}} 
            \Big[ \Big(\int_{tQ} 1^{p'} dy \Big)^{1/p'} \Big(\int_{tQ} |\nabla u (y)|^{p} dy \Big)^{1/p} \Big] dt \\
            \leq& \frac{\sqrt{d} }{r^{d-1}} \int_{0}^{1} \frac{1}{t^{d}} \Big[ (tr)^{d/p'} \| \nabla u \|_{L^{p}(Q)} \Big] dt \qquad \big( t \in (0,1)  \big) \\
            \leq& \frac{\sqrt{d} r^{d/p'} }{r^{d-1}} \| \nabla u \|_{L^{p}(Q)} 
            \int_{0}^{1} t^{d (1/p' - 1)} dt 
            = \frac{\sqrt{d} r^{d/p'} }{r^{d-1}} \| \nabla u \|_{L^{p}(Q)}  
            \int_{0}^{1} t^{-d/p} dt \\
            \leq&  \frac{\sqrt{d} r^{1 - d/p}}{1 - d/p} \| \nabla u \|_{L^{p}(Q)}  
        \end{align*}
        By translation, the inequality remains true for all cubes $Q$ whose sides of length $r$ are parallel to the coordinate axes, i.e. 
        \begin{align*}
            | \bar{u} - u(x) | = \frac{\sqrt{d} r^{1 - d/p}}{1 - d/p} \| \nabla u \|_{L^{p}(Q)}, \qquad \forall x \in Q.
        \end{align*}
        Now for $u \in C_{c}^{1} (\mathbb{R}^{d})$ and any $x \in \mathbb{R}^{d}$, we choose open cube $Q$ whose sides of length $r$ are parallel to coordinate axes and contains $x$
        \begin{align*}
            |u(x)| \leq& | \bar{u} | + |u(x) - \bar{u} |
            \leq \frac{1}{|Q|} \Big( \int_{Q} 1^{p'} dy \Big)^{1/p'}  \Big( \int_{Q} |u|^{p} dy \Big)^{1/p} + \frac{\sqrt{d} r^{1 - d/p}}{1 - d/p} \| \nabla u \|_{L^{p}(Q)} \\
            \leq& \frac{r^{d/p'}}{r^{d}} \| u \|_{L^{p}(Q)}
            + \frac{\sqrt{d} r^{1 - d/p}}{1 - d/p} \| \nabla u \|_{L^{p}(Q)}
            = \frac{1}{r^{d/p}} \| u \|_{L^{p}(Q)} + \frac{\sqrt{d} r^{1 - d/p}}{1 - d/p} \| \nabla u \|_{L^{p}(Q)} \\
            \leq& \Big[ \| u \|_{L^{p}(Q)}^{p} + \| \nabla u \|_{L^{p}(Q)}^{p} \Big]^{1/p} 
            \Big[ \Big(\frac{1}{r^{d/p}} \Big)^{p'} + \Big( \frac{\sqrt{d} r^{1 - d/p}}{1 - d/p} \Big)^{p'} \Big]^{1/p'} \\
            =& \| u \|_{W^{1,p}(\Omega)} \Big[ \frac{1}{r^{dp'/p}} + \Big( \frac{\sqrt{d}}{1 - d/p} \Big)^{p'} r^{p'(1 - d/p)} \Big]^{1/p'}
        \end{align*}
        We minimize 
        \begin{align*}
            F(r) &= a r^{b} + \frac{1}{r^{c}}, \ \  a,b,c > 0. \\
            F'(r) &= ab r^{b-1} - c r^{-c-1} = 0, \ \ ab r^{b-1} = c r^{-c-1}, \ \ 
            r^{b+c} = \frac{c}{ab}, \ \ r = \Big( \frac{c}{ab} \Big)^{1/(b+c)}, \\
            F_{\rm{min}} &= a^{1 - b/(b+c)} \Big( \frac{c}{b} \Big)^{b/(b+c)}
            + \Big( \frac{ab}{c} \Big)^{c/(b+c)}
            = a^{c/(b+c)} \Big[ \Big( \frac{c}{b} \Big)^{b/(b+c)} + \Big( \frac{b}{c} \Big)^{c/(b+c)} \Big]
        \end{align*}
        We have 
        \begin{align*}
            &a = \Big( \frac{\sqrt{d}}{1 - d/p} \Big)^{p'}, \quad 
            b = p'(1 - d/p), \quad c = dp'/p, \quad \\
            &c/b = (dp'/p)/(p'(1 - d/p)) = (d/p)/((1 - d/p)) = d/(p - d), \quad 
            b/c = (p - d)/d,    \\
            &b + c = p' - dp'/p + dp'/p = p', \quad b/(b+c) = 1 - d/p, 
            \quad c/(b+c) = d/p \\
            & \min_{r > 0} \Big\{ \frac{1}{r^{dp'/p}} + \Big( \frac{\sqrt{d}}{1 - d/p} \Big)^{p'} r^{p'(1 - d/p)} \Big\}
            = \Big[ \Big( \frac{\sqrt{d}}{1 - d/p} \Big)^{p'} \Big]^{d/p}
            \Big[ \big( \frac{d}{p - d} \big)^{1 - d/p} + \big( \frac{p - d}{d} \big)^{d/p} \Big] \\
            &= \Big( \frac{\sqrt{d}}{1 - d/p} \Big)^{d/(p-1)} 
            \Big[ \big( \frac{d}{p - d} \big) \big( \frac{p - d}{d} \big)^{d/p} 
            + \big( \frac{p - d}{d} \big)^{d/p} \Big]
            = \Big( \frac{\sqrt{d}}{1 - d/p} \Big)^{d/(p-1)} \Big[ \big( \frac{d}{p - d} \big) + 1 \Big] \big( \frac{p - d}{d} \big)^{d/p} \\
            &= \Big( \frac{\sqrt{d}}{1 - d/p} \Big)^{d/(p-1)} \big( \frac{p}{p-d} \big)
            \big( \frac{p - d}{d} \big)^{d/p} 
        \end{align*}
        Since $1/p' = (p-1)/p$, we have 
        \begin{align*}
            & \min_{r > 0} \Big\{ \frac{1}{r^{dp'/p}} + \Big( \frac{\sqrt{d}}{1 - d/p} \Big)^{p'} r^{p'(1 - d/p)} \Big\}^{1/p'} \\
            =& \Big( \frac{\sqrt{d}}{1 - d/p} \Big)^{d/p} \big( \frac{p}{p-d} \big)^{(p-1)/p}
            \big( \frac{p - d}{d} \big)^{d(p-1)/p^{2}}:= C_{\infty,p}.
        \end{align*}
        For $u \in \big( C_{c}^{1} (\mathbb{R}^{d}) \big)^{d}$, 
    \normalcolor
\end{confidential}

    \section{Partitioned, 
    conservative, variable step, second-order method}
    \label{sec:num-analysis}

In this section, 
first we prove
the linear
convergence of the iterative solution of the partitioned method
\eqref{eq:BE-iter}, 
to the solution at time $t_{n+1/2}$ of the fully coupled midpoint method \eqref{eq:CN-MHD}. 
Secondly, we prove the 
stability of \eqref{eq:CN-MHD}, and the conservation of three quadratic invariants: 
the energy in the original variables $u$ and $B$, the cross-helicity and the magnetic-helicity.
Finally, we perform error analysis of the full time-space discretization, using the finite element method.

\subsection{Convergence of the subiterates} 
\label{subsec:Parti-Iter} 
To simplify the presentation, we
introduce the following notations related to the errors at each iteration
    \begin{align*}
        &a_{(k)} = z^+_{n+1/2} - z^+_{(k)}, \qquad b_{(k)} = z^-_{n+1/2} - z^-_{(k)},
        \\
        & \gamma_n = \max \big\{\|\nabla z^+_{n+1/2}\|, \|\nabla z^-_{n+1/2}\| \big\}, \qquad
        \delta = \Big( \frac{ 2(d-1) }{d^{3/2}} \Big)^{d/4}.
    \end{align*}
and a weighted $H^1(\Omega)$ norm of the errors $a_{(k)}, b_{(k)}$
    \begin{align*}
        {\cal G}_{(k)} 
        &= 
        \frac{4-d}{8} \big( \delta^2\gamma_n \big)^{\frac{4}{4-d}} 
        \Big( \frac{d}{2} \frac{ \nu+\nu_m }{\nu\nu_m} \Big)^{\frac{d}{4-d}}
        \big( \|a_{(k)}\|^{2} + \|b_{(k)}\|^{2}
        \big)   \\
        & \qquad
        + \frac{\nu^2 - \nu\nu_m + \nu_m^2 }{ 4(\nu+\nu_m) } \big(  \|\nabla a_{(k)} \|^2 
        + \|\nabla b_{(k)}\|^2 \big).
    \end{align*}
The next result shows the $H^1(\Omega)$ strong convergence of the iterates $z_{(k)}^{\pm}$, 
under the following time step restriction 
\begin{align}
\label{time_cond}
\tau_n 
\leq
        \frac{8}{4-d}
        \frac{1}{(  \delta^2 \gamma_n ) ^{4/(4-d)} }
        \frac{\nu^2 - \nu\nu_m + \nu_m^2 }{ \nu^2 + \nu_m^2  }
        \Big(\frac{2\nu\nu_m}{ d( \nu+\nu_m ) } \Big)^{d/(4-d)}.
\end{align}
    \begin{theorem}\label{themreo_1}
Asume that the time step $\tau_n$ satisfies condition \eqref{time_cond}.
Then, the \eqref{eq:BE-iter} sequence of iterates $\{z_{(k)}^{\pm}\}_{k \geq 0}$ 
        converges linearly to $z_{n+1/2}^{\pm}$ in the $H^1(\Omega)$ norm
        \begin{align}
        \label{eq:linear conv}
        {\cal G}_{(k)} 
        \leq \Big( 1 -  \frac{2\nu\nu_m}{\nu^2 + \nu\nu_m + \nu_m^2} \Big)  \, {\cal G}_{(k-1)}.
        \end{align}
    \end{theorem}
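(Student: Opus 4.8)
The plan is to subtract the partitioned iteration \eqref{eq:BE-iter} from the coupled backward Euler step \eqref{eq:BE-MHD} for each Els\"asser component, test the resulting error equation against the current error, add the two identities, and show that under \eqref{time_cond} the weighted quantity $\mathcal{G}_{(k)}$ contracts with factor $\rho=1-2\nu\nu_m/(\nu^2+\nu\nu_m+\nu_m^2)$. First I would form the $z^+$ error equation. Using $z^-_{(k-1)}=z^-_{n+1/2}-b_{(k-1)}$ and $z^+_{(k)}=z^+_{n+1/2}-a_{(k)}$, the difference of the convection terms splits as
\begin{equation*}
(z^-_{n+1/2}\cdot\nabla)z^+_{n+1/2}-(z^-_{(k-1)}\cdot\nabla)z^+_{(k)}=(z^-_{n+1/2}\cdot\nabla)a_{(k)}+(b_{(k-1)}\cdot\nabla)z^+_{n+1/2}-(b_{(k-1)}\cdot\nabla)a_{(k)},
\end{equation*}
so that, writing $\pi=p^+_{n+1/2}-p^+_{(k)}$, the error $a_{(k)}$ solves
\begin{equation*}
\frac{a_{(k)}}{\tau_n/2}-(B_\circ\cdot\nabla)a_{(k)}+(z^-_{n+1/2}\cdot\nabla)a_{(k)}-(b_{(k-1)}\cdot\nabla)a_{(k)}+(b_{(k-1)}\cdot\nabla)z^+_{n+1/2}-\nu^+\Delta a_{(k)}-\nu^-\Delta b_{(k-1)}+\nabla\pi=0,
\end{equation*}
and symmetrically for $b_{(k)}$ under $+\leftrightarrow-$, $a\leftrightarrow b$.

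Next I would test the $a$-equation with $a_{(k)}\in V$ and the $b$-equation with $b_{(k)}\in V$. The pressure gradients drop out by divergence-freeness; the constant transport $(B_\circ\cdot\nabla)a_{(k)}$ and both advective terms in which $a_{(k)}$ is differentiated are skew-symmetric (since $B_\circ$, $z^-_{n+1/2}$ and $b_{(k-1)}$ are divergence-free) and vanish. Adding the two identities leaves
\begin{align*}
&\frac{2}{\tau_n}\big(\|a_{(k)}\|^2+\|b_{(k)}\|^2\big)+\nu^+\big(\|\nabla a_{(k)}\|^2+\|\nabla b_{(k)}\|^2\big) \\
&\qquad+\nu^-\big[(\nabla b_{(k-1)},\nabla a_{(k)})+(\nabla a_{(k-1)},\nabla b_{(k)})\big] \\
&\quad=-\big((b_{(k-1)}\cdot\nabla)z^+_{n+1/2},a_{(k)}\big)-\big((a_{(k-1)}\cdot\nabla)z^-_{n+1/2},b_{(k)}\big).
\end{align*}

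The remaining work is to bound the right-hand side together with the cross-diffusion. I would control the cross-diffusion by Cauchy--Schwarz and a weighted Young inequality, splitting each product between level $k$ (absorbed into $\nu^+\|\nabla\cdot\|^2$) and level $k-1$. For the nonlinear terms I would use H\"older in the form $L^4$--$L^2$--$L^4$, bound $\|\nabla z^\pm_{n+1/2}\|\le\gamma_n$, and invoke the Ladyzhenskaya inequality \eqref{eq:Ladyzh} to obtain
\begin{equation*}
\big|\big((b_{(k-1)}\cdot\nabla)z^+_{n+1/2},a_{(k)}\big)\big|\le\delta^2\gamma_n\,\|b_{(k-1)}\|^{1-d/4}\|\nabla b_{(k-1)}\|^{d/4}\|a_{(k)}\|^{1-d/4}\|\nabla a_{(k)}\|^{d/4}.
\end{equation*}
A further Young inequality with conjugate exponents $p=4/d$, $q=4/(4-d)$ turns each $L^4$ factor into a gradient part of power two and an $L^2$ part of power two: the level-$k$ gradient parts are absorbed into the diffusion, the level-$k$ term $\|a_{(k)}\|^2$ into $\tfrac{2}{\tau_n}\|a_{(k)}\|^2$, and the level-$(k-1)$ parts are retained for the right-hand side.

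Collecting terms, the left-hand side is bounded below by $\mathcal{G}_{(k)}$ and the right-hand side above by $\rho\,\mathcal{G}_{(k-1)}$, which is \eqref{eq:linear conv}. The main obstacle is the constant-matching in this last step: the residual nonlinear terms couple the current error $a_{(k)}$ with the lagged error $b_{(k-1)}$ inside a single product of four norms, and the Young parameters must be tuned so that (i) the level-$k$ gradient and $L^2$ contributions are absorbable on the left with precisely the coefficients $c_2=(\nu^2-\nu\nu_m+\nu_m^2)/(4(\nu+\nu_m))$ and the $L^2$-weight appearing in $\mathcal{G}_{(k)}$, and (ii) the leftover level-$(k-1)$ contributions, together with the lagged cross-diffusion, reassemble exactly into $\rho\,\mathcal{G}_{(k-1)}$. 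Matching these constants---rather than merely proving some linear decay---is what fixes the weights in $\mathcal{G}_{(k)}$ and produces the explicit step-size bound \eqref{time_cond}, whose power $d/(4-d)$ and prefactor $\tfrac{8}{4-d}(\delta^2\gamma_n)^{-4/(4-d)}$ mirror the $L^2$-weight of $\mathcal{G}_{(k)}$ and arise exactly from absorbing the lower-order terms into $\tfrac{2}{\tau_n}\|a_{(k)}\|^2$.
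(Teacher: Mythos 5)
Your proposal follows essentially the same route as the paper's proof: subtract the coupled BE solve \eqref{eq:BE-MHD} from the iteration \eqref{eq:BE-iter}, test with the current errors $a_{(k)},b_{(k)}$ (so the pressure and all skew-symmetric transport terms vanish), treat the lagged cross-diffusion $\nu^-\big[(\nabla a_{(k)},\nabla b_{(k-1)})+(\nabla a_{(k-1)},\nabla b_{(k)})\big]$ by a weighted Young/polarization split between levels $k$ and $k-1$, and bound the convective terms by H\"older--Ladyzhenskaya--Young with exponents $4/d$ and $4/(4-d)$, after which the time-step condition \eqref{time_cond} is exactly what allows the zeroth-order terms to be absorbed into $\tfrac{2}{\tau_n}\|a_{(k)}\|^2+\tfrac{2}{\tau_n}\|b_{(k)}\|^2$ and yields the contraction factor $1-2\nu\nu_m/(\nu^2+\nu\nu_m+\nu_m^2)$. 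The only discrepancy is bookkeeping in the final step: with the paper's weights the right-hand side assembles to exactly $\mathcal{G}_{(k-1)}$ while the left-hand side dominates $\rho^{-1}\mathcal{G}_{(k)}$, rather than the left dominating $\mathcal{G}_{(k)}$ and the right being $\le\rho\,\mathcal{G}_{(k-1)}$ as you phrase it --- an equivalent normalization of the same argument.
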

\begin{proof}
First, we subtract \eqref{eq:BE-iter} from \eqref{eq:BE-MHD},
\begin{confidential}
            \color{red}
            gives, respectively,
            \begin{align}\label{minus}
                & \frac{z_{n+1/2}^+ - z_{(k)}^+}{\tau_n/2} - (B_0 \cdot \nabla) \left(z_{n+1/2}^+ - z_{(k)}^+\right) + \left(z_{n+1/2}^-\cdot \nabla\right) z_{n+1/2}^+ - \left(z_{(k - 1)}^-\cdot \nabla\right) z_{(k)}^+ \notag \\
                & - \nu^+ \Delta \left(z_{n+1/2}^+ - z_{(k)}^+\right) - \nu^- \Delta \left(z_{n+1/2}^- - z_{(k-1)}^-\right)
                + \nabla \left(p_{n+1/2}^+ - p_{(k)}^+\right) = 0,  \notag \\
                & \frac{z_{n+1/2}^- - z_{(k)}^-}{\tau_n/2} - (B_0 \cdot \nabla) \left(z_{n+1/2}^- - z_{(k)}^-\right) + \left(z_{n+1/2}^+\cdot \nabla\right) z_{n+1/2}^- - \left(z_{(k - 1)}^+\cdot \nabla\right) z_{(k)}^- \notag \\
                & - \nu^+ \Delta \left(z_{n+1/2}^- - z_{(k)}^-\right) - \nu^- \Delta \left(z_{n+1/2}^+ - z_{(k-1)}^+\right)
                + \nabla \left(p_{n+1/2}^- - p_{(k)}^-\right) = 0.
            \end{align}
            Second, multiplying the first equation of \eqref{minus} with $z_{n+1/2}^+ - z_{(k)}^+$ and integrating it over $\Omega$ yields
            \begin{align}\label{integral_1}
                & \frac{2}{\tau_n} \big\|z_{n+1/2}^+ - z_{(k)}^+\big\|^2 + \int_{\Omega} \left(z_{n+1/2}^- - z_{(k - 1)}^-\right) \cdot \nabla z_{n+1/2}^+ \left(z_{n+1/2}^+ - z_{(k)}^+\right) \notag \\
                & + \nu^+ \big\|\nabla (z_{n+1/2}^+ - z_{(k)}^+) \big\|^2 + \nu^- \left \langle \nabla \left(z_{n+1/2}^- - z_{(k-1)}^-\right), \nabla \left(z_{n+1/2}^+ - z_{(k)}^+\right)  \right \rangle = 0, 
            \end{align}
            where $\left \langle \cdot , \cdot \right \rangle$ denotes the $L^2(\Omega)$ inner product and we used the equality
            \begin{align*}
                & \left(z_{n+1/2}^-\cdot \nabla\right) z_{n+1/2}^+ - \left(z_{(k - 1)}^-\cdot \nabla\right) z_{(k)}^+ \\
                = & \left(z_{n+1/2}^- - z_{(k - 1)}^-\right) \cdot \nabla z_{n+1/2}^+ + z_{(k - 1)}^- \cdot \nabla \left(z_{n+1/2}^+ - z_{(k)}^+\right).
            \end{align*}
            Similarly, multiplying the second equation of \eqref{minus} with $z_{n+1/2}^- - z_{(k)}^-$ and integrating it over $\Omega$ yields
            \\
            \normalcolor
\end{confidential}
and test the result with 
            $a_{(k)}$ and $b_{(k)}$, respectively,
to obtain
\begin{confidential}
\color{red}
\begin{align*}\label{integral_2}
    & \frac{2}{\tau_n} \big\|z_{n+1/2}^- - z_{(k)}^-\big\|^2 + \int_{\Omega} \left(z_{n+1/2}^+ - z_{(k - 1)}^+ \right) \cdot \nabla z_{n+1/2}^- \left(z_{n+1/2}^- - z_{(k)}^-\right) \notag \\
    & + \nu^+ \big\|\nabla (z_{n+1/2}^- - z_{(k)}^-) \big\|^2 + \nu^-  \left(\nabla \left(z_{n+1/2}^+ - z_{(k-1)}^+\right), \nabla \left(z_{n+1/2}^- - z_{(k)}^-\right) \right) = 0.
\end{align*}
Adding \eqref{integral_1} and the above we obtain
\begin{align*}
& \frac{2}{\tau_n} \Big(  \big\|z_{n+1/2}^+ - z_{(k)}^+\big\|^2 
+ \big\|z_{n+1/2}^- - z_{(k)}^-\big\|^2
\Big)
\\
& \quad
+ 
\int_{\Omega} \big(z_{n+1/2}^- - z_{(k - 1)}^-\big) \cdot \nabla z_{n+1/2}^+ \big(z_{n+1/2}^+ - z_{(k)}^+\big) 
\notag
\\
& \quad
+ 
\int_{\Omega} \big(z_{n+1/2}^+ - z_{(k - 1)}^+ \big) \cdot \nabla z_{n+1/2}^- \big(z_{n+1/2}^- - z_{(k)}^-\big)
\notag 
\\
& \quad
+ 
\frac{\nu+\nu_m}{2} \Big(  \big\|\nabla (z_{n+1/2}^+ - z_{(k)}^+) \big\|^2 
+ \big\|\nabla (z_{n+1/2}^- - z_{(k)}^-) \big\|^2 \Big)
\notag
\\
& 
\quad
+ 
\frac{\nu - \nu_m}{2} \Big( \nabla \big(z_{n+1/2}^- - z_{(k-1)}^-\big), \nabla \big(z_{n+1/2}^+ - z_{(k)}^+\big)  \Big)
\notag
\\
& 
\quad
+ 
\frac{\nu - \nu_m}{2} \Big(\nabla \big(z_{n+1/2}^+ - z_{(k-1)}^+\big), \nabla \big(z_{n+1/2}^- - z_{(k)}^-\big) \Big)
= 0,
\notag
\end{align*}
i.e.,
\normalcolor
\end{confidential}
        \begin{align*} 
            & \frac{2}{\tau_n} \big(  \| a_{(k)} \|^2 + \| b_{(k)}\|^2
            \big)
            + 
            \frac{\nu+\nu_m}{2} \big(  \|\nabla a_{(k)} \|^2 
            + \|\nabla b_{(k)}\|^2 \big)
            \\
            & 
            \quad
            +
            \frac{\nu - \nu_m}{2} 
            \Big( 
            ( \nabla a_{(k)}  ,  \nabla b_{(k-1)}) +  (\nabla a_{(k-1)}, \nabla b_{(k)})
            \Big)
            \\
            & \quad
            + 
            \int_{\Omega}  b_{(k-1)}  \cdot \nabla z_{n+1/2}^+ \, a_{(k)} 
            + 
            \int_{\Omega} a_{(k)} \cdot \nabla z_{n+1/2}^- \, b_{(k)}
            = 0.
            \notag
        \end{align*}
Then, using the polarization identity on the mixed diffusion terms, we have
        \begin{align*}
            & 
            \frac{\nu - \nu_m}{2} 
            \Big( 
            ( \nabla a_{(k)}  ,  \nabla b_{(k-1)}) +  (\nabla a_{(k-1)}, \nabla b_{(k)})
            \Big)
            \\
            & 
            = 
            - \frac{\nu + \nu_m}{4} \big( \| \nabla a_{(k)} \|^2 + \| \nabla b_{(k)} \|^2  \big)
            - \frac{(\nu - \nu_m)^2}{4(\nu+\nu_m)} \big( \| \nabla a_{(k-1)} \|^2 + \| \nabla b_{(k-1)} \|^2 \big) 
            \\
            & \qquad
            + \frac{|\nu - \nu_m|}{4} \bigg\| \sqrt{ \frac{ \nu + \nu_m}{ |\nu-\nu_m| }} \nabla a_{(k)}  + \mbox{\rm sign} (\nu-\nu_m)  \sqrt{\frac{|\nu-\nu_m|}{\nu+\nu_m}} \nabla b_{(k-1)}
            \bigg\|^2
            \\
            & \qquad
            + \frac{|\nu - \nu_m|}{4} \bigg\| \sqrt{ \frac{ \nu + \nu_m}{ |\nu-\nu_m| }} \nabla b_{(k)} + \mbox{\rm sign} (\nu-\nu_m)  \sqrt{\frac{|\nu-\nu_m|}{\nu+\nu_m}} \nabla a_{(k-1)}
            \bigg\|^2,
        \end{align*}
        where  
        \begin{align*}
            \text{sign}(x) \coloneqq \begin{cases}
                -1, & \text{if} \,\, x < 0,\\
                0, & \text{if} \,\, x = 0,\\
                1, & \text{if} \,\, x > 0,
            \end{cases}
        \end{align*}
and therefore
\begin{confidential}
        \color{red}
        \begin{align*}
        & \frac{2}{\tau_n} \big(  \| a_{(k)} \|^2 + \| b_{(k)}\|^2
        \big)
        + 
        \frac{\nu+\nu_m}{2} \big(  \|\nabla a_{(k)} \|^2 
        + \|\nabla b_{(k)}\|^2 \big)
        \\
        & 
        \qquad
        - 
        \frac{\nu + \nu_m}{4} \big( \| \nabla a_{(k)} \|^2 + \| \nabla b_{(k)} \|^2  \big)
        - \frac{(\nu - \nu_m)^2}{4(\nu+\nu_m)} \big( \| \nabla a_{(k-1)} \|^2 + \| \nabla b_{(k-1)} \|^2 \big) 
        \\
        & \qquad
        + \frac{|\nu - \nu_m|}{4} \bigg\| \sqrt{ \frac{ \nu + \nu_m}{ |\nu-\nu_m| }} \nabla a_{(k)}  + \mbox{\rm sign} (\nu-\nu_m)  \sqrt{\frac{|\nu-\nu_m|}{\nu+\nu_m}} \nabla b_{(k-1)}
        \bigg\|^2
        \\
        & \qquad
        + \frac{|\nu - \nu_m|}{4} \bigg\| \sqrt{ \frac{ \nu + \nu_m}{ |\nu-\nu_m| }} \nabla b_{(k)} + \mbox{\rm sign} (\nu-\nu_m)  \sqrt{\frac{|\nu-\nu_m|}{\nu+\nu_m}} \nabla a_{(k-1)}
        \bigg\|^2
        \\
        & \quad
        + 
        \int_{\Omega}  b_{(k-1)}  \cdot \nabla z_{n+1/2}^+ \, a_{(k)} 
        + 
        \int_{\Omega} a_{(k)} \cdot \nabla z_{n+1/2}^- \, b_{(k)}
        = 0
        ,
        \notag
        \end{align*}
        or,
        \normalcolor
\end{confidential}
        \begin{align*} 
            & \frac{2}{\tau_n} \big(  \| a_{(k)} \|^2 + \| b_{(k)}\|^2
            \big)
            +
            \frac{\nu+\nu_m}{4} \big(  \|\nabla a_{(k)} \|^2 
            + \|\nabla b_{(k)}\|^2 \big)
            \\
            & \qquad
            + \frac{|\nu - \nu_m|}{4} \bigg\| \sqrt{ \frac{ \nu + \nu_m}{ |\nu-\nu_m| }} \nabla a_{(k)}  + \mbox{\rm sign} (\nu-\nu_m)  \sqrt{\frac{|\nu-\nu_m|}{\nu+\nu_m}} \nabla b_{(k-1)}
            \bigg\|^2
            \\
            & \qquad
            + \frac{|\nu - \nu_m|}{4} \bigg\| \sqrt{ \frac{ \nu + \nu_m}{ |\nu-\nu_m| }} \nabla b_{(k)} + \mbox{\rm sign} (\nu-\nu_m)  \sqrt{\frac{|\nu-\nu_m|}{\nu+\nu_m}} \nabla a_{(k-1)}
            \bigg\|^2
            \\
            & \qquad
            + 
            \int_{\Omega}  b_{(k-1)}  \cdot \nabla z_{n+1/2}^+ \, a_{(k)} 
            + 
            \int_{\Omega} a_{(k)} \cdot \nabla z_{n+1/2}^- \, b_{(k)}
            \\
            &
            =
            \frac{(\nu - \nu_m)^2}{4(\nu+\nu_m)} \big( \| \nabla a_{(k-1)} \|^2 + \| \nabla b_{(k-1)} \|^2 \big).
            \notag
        \end{align*}
Now we apply the Ladyzhenskaya inequality \eqref{eq:Ladyzh} to the convective terms
\begin{confidential}
            \color{red}
            Next, we estimate the following term
            \begin{align*}
                \int_{\Omega} \left(z_{n+1/2}^{\mp} - z_{(k - 1)}^{\mp} \right) \cdot \nabla z_{n+1/2}^{\pm} \left(z_{n+1/2}^{\pm} - z_{(k)}^{\pm}\right).
            \end{align*}
            \normalcolor
            \normalcolor 
\end{confidential}
\begin{confidential}
        \color{blue}
        More specifically
        \begin{align*}
        \tag{2D}
        \|u\|_{L^4(\Omega)}
        \leq 
        2^{-1/4} \|u\|^{1/2} \|\nabla u \|^{1/2},
        \end{align*}
        (or for a weaker result Lemma 6.1 in \cite{MR0259693} and the Ladyzhenskaya inequalities in \cite{MR108962}, Temam \cite[p. 197]{MR1846644}, where the constant is $2^{1/4}\approx 1.1892$, while Galdi's constant is $2^{-1/4}\approx 0.8409$), 
        \\
        \color{darkgreen}
        while in 3D, Galdi's result is
        \begin{align*}
        \tag{3D}
        \|u\|_{L^4(\Omega)}
        \leq 
        \Big(\frac{4}{3\sqrt{3}}\Big)^{3/4} \|u\|^{1/4} \|\nabla u \|^{3/4},
        \end{align*}
        hence the Galdi constant is $(\frac{4}{3\sqrt{3}})^{3/4}\approx 0.8218$,
        with weaker results in Temam \cite[p. 200]{MR1846644}
        where the Temam constant is $2^{1/2}\approx 1.4142$.
        Lions has it with just an unspecified constant \cite[p. 220]{MR0259693}.
        \\
        Layton \cite[p.11]{MR2442411} gives the constant $\frac{4}{3\sqrt{3}} \approx 0.7698$, citing Ladyzhenskaya \cite[p. 9]{MR254401}, but the constant in Ladyzhenskaya's book is the same as the one given by Temam, namely $2^{1/2}$.
        \normalcolor
        \\
\end{confidential}
        to obtain
\begin{confidential}
            \color{darkblue}
            \begin{align*}
            E_1 
            & 
            \coloneqq \int_{\Omega} b_{(k - 1)}  \cdot \nabla z_{n+1/2}^{+} \, a_{(k)}
            \leq \|b_{(k - 1)}\|_{L^4(\Omega)} \|\nabla z_{n+1/2}^+ \| \| a_{(k)}\|_{L^4(\Omega)} 
            \\
            & 
            \leq 
            \Big( \frac{2(d-1)}{d^{3/2}} \Big)^{d/4} \|b_{(k - 1)}\|^{1-d/4} \|\nabla b_{(k - 1)} \|^{d/4}
            \|\nabla z_{n+1/2}^+\| 
            \Big( \frac{2(d-1)}{d^{3/2}} \Big)^{d/4} \|a_{(k)}\|^{1-d/4} \|\nabla a_{(k)} \|^{d/4}
            \\
            & 
            =
            \Big( \frac{2(d-1)}{d^{3/2}} \Big)^{d/2} \|b_{(k - 1)}\|^{1-d/4} \|\nabla b_{(k - 1)} \|^{d/4}
            \|\nabla z_{n+1/2}^+\| 
            \|a_{(k)}\|^{1-d/4} \|\nabla a_{(k)} \|^{d/4}
            \\
            & 
            \tag{Cauchy-Schwarz}
            \leq
            \frac{1}{2}\Big( \frac{2(d-1)}{d^{3/2}} \Big)^{d/2} 
            \|\nabla z_{n+1/2}^+\|
            \|b_{(k - 1)}\|^{2-d/2} \|\nabla b_{(k - 1)} \|^{d/2}
            \\
            & 
            \qquad
            + 
            \frac{1}{2}\Big( \frac{2(d-1)}{d^{3/2}} \Big)^{d/2}
            \|\nabla z_{n+1/2}^+\|
            \underbrace{\frac{1}{\varepsilon_n^{d/4}}\|a_{(k)}\|^{2-d/2}}_{L^{4/(4-d)}}
            \underbrace{\varepsilon_n^{d/4}\|\nabla a_{(k)} \|^{d/2}}_{L^{4/d}}
            \\
            & 
            \tag{Young}
            \leq
            \frac{1}{2}\Big( \frac{2(d-1)}{d^{3/2}} \Big)^{d/2} 
            \|\nabla z_{n+1/2}^+\|
            \frac{1}{\varepsilon_n^{d/4}}\|b_{(k - 1)}\|^{2-d/2} \varepsilon_n^{d/4}\|\nabla b_{(k - 1)} \|^{d/2}
            \\
            & 
            \qquad
            + 
            \frac{1}{2}\Big( \frac{2(d-1)}{d^{3/2}} \Big)^{d/2}
            \|\nabla z_{n+1/2}^+\|
            \Big( 
            \frac{1}{(\varepsilon_n^{d/4})^{4/(4-d)}} \frac{4-d}{4} \|a_{(k)}\|^{(2-d/2)(4/(4-d))}
            + \frac{d}{4}(\varepsilon_n^{d/4})^{4/d} \|\nabla a_{(k)} \|^{\frac{d}{2}\frac{4}{d}}
            \Big)
            \\
            & 
            = 
            \frac{1}{2}\Big( \frac{2(d-1)}{d^{3/2}} \Big)^{d/2} 
            \|\nabla z_{n+1/2}^+\|
            \Big( 
            \frac{1}{\varepsilon_n^{d/(4-d)}} \frac{4-d}{4} \|b_{(k-1)}\|^{2} 
            + \frac{d}{4}\varepsilon_n\|\nabla b_{(k-1)} \|^{2}
            \Big)
            \\
            & 
            \qquad
            + 
            \frac{1}{2}\Big( \frac{2(d-1)}{d^{3/2}} \Big)^{d/2}
            \|\nabla z_{n+1/2}^+\|
            \Big( 
            \frac{1}{\varepsilon_n^{d/(4-d)}} \frac{4-d}{4} \|a_{(k)}\|^{2}
            + \frac{d}{4}\varepsilon_n \|\nabla a_{(k)} \|^{2}
            \Big)
            \\
            & 
            =
            \frac{1}{2}\Big( \frac{2(d-1)}{d^{3/2}} \Big)^{d/2} 
            \frac{1}{\varepsilon_n^{d/(4-d)}}
            \frac{4-d}{4}
            \|\nabla z_{n+1/2}^+\|
            \Big( 
            \|a_{(k)}\|^{2} + \|b_{(k-1)}\|^{2}
            \Big)
            \\
            & 
            \qquad
            + 
            \frac{1}{2}\Big( \frac{2(d-1)}{d^{3/2}} \Big)^{d/2}
            \frac{d}{4}\varepsilon_n 
            \|\nabla z_{n+1/2}^+\|
            \Big( 
            \|\nabla a_{(k)} \|^{2} + \|\nabla b_{(k-1)} \|^{2}
            \Big)
            ,
            \end{align*}
            so similarly
            \begin{align*}
            E_1  + E_2
            & 
            \geq
            - 
            \frac{4-d}{8}\Big( \frac{2(d-1)}{d^{3/2}} \Big)^{d/2} 
            \frac{1}{\varepsilon_n^{d/(4-d)}}
            \|\nabla z_{n+1/2}^+\|
            \Big( 
            \|a_{(k)}\|^{2} + \|b_{(k)}\|^{2}
            \Big)
            \\
            & 
            \qquad
            - 
            \frac{4-d}{8}\Big( \frac{2(d-1)}{d^{3/2}} \Big)^{d/2} 
            \frac{1}{\varepsilon_n^{d/(4-d)}}
            \|\nabla z_{n+1/2}^+\|
            \Big( 
            \|a_{(k-1)}\|^{2} + \|b_{(k-1)}\|^{2}
            \Big)
            \\
            & 
            \qquad
            - 
            \frac{d}{8}\Big( \frac{2(d-1)}{d^{3/2}} \Big)^{d/2}
            \varepsilon_n 
            \|\nabla z_{n+1/2}^+\|
            \Big( 
            \|\nabla a_{(k)} \|^{2} + \|\nabla b_{(k)} \|^{2}
            \Big)
            \\
            & 
            \qquad
            - 
            \frac{d}{8}\Big( \frac{2(d-1)}{d^{3/2}} \Big)^{d/2}
            \varepsilon_n 
            \|\nabla z_{n+1/2}^+\|
            \Big( 
            \|\nabla a_{(k-1)} \|^{2} + \|\nabla b_{(k-1)} \|^{2}
            \Big)
            .
            \end{align*}
            The idea is to find a positive 
            $$
            z>0$$
            and $\varepsilon_n$
            such that 
            \begin{align}
            \label{linear-conv-goal1}
            & 
            \frac{\nu+\nu_m}{4} 
            - 
            \frac{d}{8}\Big( \frac{2(d-1)}{d^{3/2}} \Big)^{d/2}
            \varepsilon_n 
            \|\nabla z_{n+1/2}^+\|
            \\
            & 
            \geq
            (1+z)
            \Big(
            \frac{(\nu-\nu_m)^2}{4(\nu+\nu_m)}
            + 
            \frac{d}{8}\Big( \frac{2(d-1)}{d^{3/2}} \Big)^{d/2}
            \varepsilon_n 
            \|\nabla z_{n+1/2}^+\|
            \Big).
            \notag
            \end{align}
            Choosing 
            \begin{align}
            \label{eq:z}
            & 
            z 
            = \frac{2 \nu\nu_m}{\nu^2 - \nu\nu_m + \nu_m^2},
            \qquad
            1 + z 
            = \frac{\nu^2 + \nu\nu_m + \nu_m^2}{\nu^2 + \nu_m^2 - \nu\nu_m}
            ,
            \qquad
            2 + z 
            = \frac{ 2 ( \nu^2 + \nu_m^2 ) }{\nu^2 - \nu\nu_m + \nu_m^2 }
            ,
            \end{align}
            also gives 
            \begin{align*}
            \varepsilon_n 
            = 
            \frac{2}{ d \gamma_n}
            \Big( \frac{ 2 (d-1) }{d^{3/2}} \Big) ^{-d/2}
            \frac{\nu\nu_m}{ \nu+\nu_m }
            .
            \end{align*}
            Now we evaluate
            \begin{align*}
            & 
            \frac{4-d}{8}\Big( \frac{2(d-1)}{d^{3/2}} \Big)^{d/2} 
            \frac{1}{\varepsilon_n^{d/(4-d)}}
            \|\nabla z_{n+1/2}^+\|
            =
            \frac{4-d}{8}\Big( \frac{2(d-1)}{d^{3/2}} \Big)^{d/2} 
            \varepsilon_n^{-d/(4-d)}
            \gamma_n
            \\
            & 
            =
            \frac{4-d}{8}\Big( \frac{2(d-1)}{d^{3/2}} \Big)^{d/2} 
            \Big( 
            \frac{2}{ d \gamma_n}
            \Big( \frac{ 2 (d-1) }{d^{3/2}} \Big) ^{-d/2}
            \frac{\nu\nu_m}{ \nu+\nu_m }
            \Big)^{-d/(4-d)}
            \gamma_n
            \\
            & 
            =
            \frac{4-d}{8}\Big( \frac{2(d-1)}{d^{3/2}} \Big)^{d/2} 
            \Big( 
            \frac{2}{ d \gamma_n} \Big)^{-d/(4-d)}
            \Big( \frac{ 2 (d-1) }{d^{3/2}} \Big) ^{\frac{d^2}{2(4-d)}}
            \Big( \frac{\nu\nu_m}{ \nu+\nu_m }\Big)^{-d/(4-d)}
            \gamma_n
            \\
            & 
            =
            \frac{4-d}{8}\Big( \frac{2(d-1)}{d^{3/2}} \Big)^{\frac{d}{2}+\frac{d^2}{2(4-d)}} 
            \Big( 
            \frac{ d \gamma_n}{2} \Big)^{d/(4-d)}
            \Big( \frac{ \nu+\nu_m }{\nu\nu_m} \Big)^{d/(4-d)}
            \gamma_n
            \\
            & 
            =
            \frac{4-d}{8}\Big( \frac{2(d-1)}{d^{3/2}} \Big)^{\frac{2d}{4-d}} 
            \Big( \frac{d}{2} \Big)^{d/(4-d)}
            \Big( \frac{ \nu+\nu_m }{\nu\nu_m} \Big)^{\frac{d}{4-d}}
            \gamma_n^{\frac{4}{4-d}}
            \end{align*}
            and
            \begin{align*}
            &
            \frac{d}{8}\Big( \frac{2(d-1)}{d^{3/2}} \Big)^{d/2}
            \varepsilon_n 
            \|\nabla z_{n+1/2}^+\|
            =
            \frac{\bcancel{d}}{8}\cancel{\Big( \frac{2(d-1)}{d^{3/2}} \Big)^{d/2}}
            \frac{2}{ \bcancel{d} \xcancel{\gamma_n}}
            \cancel{\Big( \frac{ 2 (d-1) }{d^{3/2}} \Big) ^{-d/2}}
            \frac{\nu\nu_m}{ \nu+\nu_m }
            \xcancel{\gamma_n}
            \\
            & 
            =
            \frac{1}{4}
            \frac{\nu\nu_m}{ \nu+\nu_m }
            .
            \end{align*}
            Therefore 
            \begin{align*}
            E_1  + E_2
            & 
            \geq
            - 
            \frac{4-d}{8}\Big( \frac{2(d-1)}{d^{3/2}} \Big)^{d/2} 
            \frac{1}{\varepsilon_n^{d/(4-d)}}
            \|\nabla z_{n+1/2}^+\|
            \Big( 
            \|a_{(k)}\|^{2} + \|b_{(k)}\|^{2}
            \Big)
            \\
            & 
            \qquad
            - 
            \frac{4-d}{8}\Big( \frac{2(d-1)}{d^{3/2}} \Big)^{d/2} 
            \frac{1}{\varepsilon_n^{d/(4-d)}}
            \|\nabla z_{n+1/2}^+\|
            \Big( 
            \|a_{(k-1)}\|^{2} + \|b_{(k-1)}\|^{2}
            \Big)
            \\
            & 
            \qquad
            - 
            \frac{d}{8}\Big( \frac{2(d-1)}{d^{3/2}} \Big)^{d/2}
            \varepsilon_n 
            \|\nabla z_{n+1/2}^+\|
            \Big( 
            \|\nabla a_{(k)} \|^{2} + \|\nabla b_{(k)} \|^{2}
            \Big)
            \\
            & 
            \qquad
            - 
            \frac{d}{8}\Big( \frac{2(d-1)}{d^{3/2}} \Big)^{d/2}
            \varepsilon_n 
            \|\nabla z_{n+1/2}^+\|
            \Big( 
            \|\nabla a_{(k-1)} \|^{2} + \|\nabla b_{(k-1)} \|^{2}
            \Big)
            \\
            &
            =
            - 
            \frac{4-d}{8}\Big( \frac{2(d-1)}{d^{3/2}} \Big)^{\frac{2d}{4-d}} 
            \Big( \frac{d}{2} \Big)^{d/(4-d)}
            \Big( \frac{ \nu+\nu_m }{\nu\nu_m} \Big)^{\frac{d}{4-d}}
            \gamma_n^{\frac{4}{4-d}}
            \Big( 
            \|a_{(k)}\|^{2} + \|b_{(k)}\|^{2}
            \Big)
            \\
            & 
            \qquad
            - 
            \frac{4-d}{8}\Big( \frac{2(d-1)}{d^{3/2}} \Big)^{\frac{2d}{4-d}} 
            \Big( \frac{d}{2} \Big)^{d/(4-d)}
            \Big( \frac{ \nu+\nu_m }{\nu\nu_m} \Big)^{\frac{d}{4-d}}
            \gamma_n^{\frac{4}{4-d}}
            \Big( 
            \|a_{(k-1)}\|^{2} + \|b_{(k-1)}\|^{2}
            \Big)
            \\
            & 
            \qquad
            - 
            \frac{1}{4}
            \frac{\nu\nu_m}{ \nu+\nu_m }
            \Big( 
            \|\nabla a_{(k)} \|^{2} + \|\nabla b_{(k)} \|^{2}
            \Big)
            \\
            & 
            \qquad
            - 
            \frac{1}{4}
            \frac{\nu\nu_m}{ \nu+\nu_m }
            \Big( 
            \|\nabla a_{(k-1)} \|^{2} + \|\nabla b_{(k-1)} \|^{2}
            \Big)
            .
            \end{align*}
            hence
            \normalcolor
\end{confidential}
        \begin{align*}
            & 
            \int_{\Omega}  b_{(k-1)}  \cdot \nabla z_{n+1/2}^+ \, a_{(k)} 
            + 
            \int_{\Omega} a_{(k-1)} \cdot \nabla z_{n+1/2}^- \, b_{(k)}
            \\
            &
            \geq
            - 
            \frac{4-d}{8}\Big( \frac{2(d-1)}{d^{3/2}} \Big)^{\frac{2d}{4-d}} 
            \Big( \frac{d}{2} \Big)^{d/(4-d)}
            \Big( \frac{ \nu+\nu_m }{\nu\nu_m} \Big)^{\frac{d}{4-d}}
            \gamma_n^{\frac{4}{4-d}}
            \Big( 
            \|a_{(k)}\|^{2} + \|b_{(k)}\|^{2}
            \Big)
            \\
            & 
            \qquad
            - 
            \frac{4-d}{8}\Big( \frac{2(d-1)}{d^{3/2}} \Big)^{\frac{2d}{4-d}} 
            \Big( \frac{d}{2} \Big)^{d/(4-d)}
            \Big( \frac{ \nu+\nu_m }{\nu\nu_m} \Big)^{\frac{d}{4-d}}
            \gamma_n^{\frac{4}{4-d}}
            \Big( 
            \|a_{(k-1)}\|^{2} + \|b_{(k-1)}\|^{2}
            \Big)
            \\
            & 
            \qquad
            - 
            \frac{1}{4}
            \frac{\nu\nu_m}{ \nu+\nu_m }
            \Big( 
            \|\nabla a_{(k)} \|^{2} + \|\nabla b_{(k)} \|^{2}
            \Big)
            - 
            \frac{1}{4}
            \frac{\nu\nu_m}{ \nu+\nu_m }
            \Big( 
            \|\nabla a_{(k-1)} \|^{2} + \|\nabla b_{(k-1)} \|^{2}
            \Big).
        \end{align*}
Finally, using this inequality in the relation above implies
\begin{confidential}
\color{red}
\begin{align*} 
& \frac{2}{\tau_n} \big(  \| a_{(k)} \|^2 + \| b_{(k)}\|^2
\big)
+
\frac{\nu+\nu_m}{4} \big(  \|\nabla a_{(k)} \|^2 
+ \|\nabla b_{(k)}\|^2 \big)
\\
& \qquad
+ \frac{|\nu - \nu_m|}{4} \bigg\| \sqrt{ \frac{ \nu + \nu_m}{ |\nu-\nu_m| }} \nabla a_{(k)}  + \mbox{\rm sign} (\nu-\nu_m)  \sqrt{\frac{|\nu-\nu_m|}{\nu+\nu_m}} \nabla b_{(k-1)}
\bigg\|^2
\\
& \qquad
+ \frac{|\nu - \nu_m|}{4} \bigg\| \sqrt{ \frac{ \nu + \nu_m}{ |\nu-\nu_m| }} \nabla b_{(k)} + \mbox{\rm sign} (\nu-\nu_m)  \sqrt{\frac{|\nu-\nu_m|}{\nu+\nu_m}} \nabla a_{(k-1)}
\bigg\|^2
\\
& \qquad 
-
\frac{4-d}{8}\Big( \frac{2(d-1)}{d^{3/2}} \Big)^{\frac{2d}{4-d}} 
\Big( \frac{d}{2} \Big)^{d/(4-d)}
\Big( \frac{ \nu+\nu_m }{\nu\nu_m} \Big)^{\frac{d}{4-d}}
\gamma_n^{\frac{4}{4-d}}
\Big( 
\|a_{(k)}\|^{2} + \|b_{(k)}\|^{2}
\Big)
\\
& 
\qquad
- 
\frac{4-d}{8}\Big( \frac{2(d-1)}{d^{3/2}} \Big)^{\frac{2d}{4-d}} 
\Big( \frac{d}{2} \Big)^{d/(4-d)}
\Big( \frac{ \nu+\nu_m }{\nu\nu_m} \Big)^{\frac{d}{4-d}}
\gamma_n^{\frac{4}{4-d}}
\Big( 
\|a_{(k-1)}\|^{2} + \|b_{(k-1)}\|^{2}
\Big)
\\
& 
\qquad
- 
\frac{1}{4}
\frac{\nu\nu_m}{ \nu+\nu_m }
\Big( 
\|\nabla a_{(k)} \|^{2} + \|\nabla b_{(k)} \|^{2}
\Big)
- 
\frac{1}{4}
\frac{\nu\nu_m}{ \nu+\nu_m }
\Big( 
\|\nabla a_{(k-1)} \|^{2} + \|\nabla b_{(k-1)} \|^{2}
\Big)
\\
&
\leq
\frac{(\nu - \nu_m)^2}{4(\nu+\nu_m)} \big( \| \nabla a_{(k-1)} \|^2 + \| \nabla b_{(k-1)} \|^2 \big) 
,
\notag
\end{align*}
\color{red}
which can be arranged as
\begin{align*} 
& 
\bigg(
\frac{2}{\tau_n} 
-
\frac{4-d}{8}\Big( \frac{2(d-1)}{d^{3/2}} \Big)^{\frac{2d}{4-d}} 
\Big( \frac{d}{2} \Big)^{d/(4-d)}
\Big( \frac{ \nu+\nu_m }{\nu\nu_m} \Big)^{\frac{d}{4-d}}
\gamma_n^{\frac{4}{4-d}}
\bigg)
\Big( 
\|a_{(k)}\|^{2} + \|b_{(k)}\|^{2}
\Big)
\\
& 
\qquad
+
\bigg(
\frac{\nu+\nu_m}{4} 
- 
\frac{1}{4}
\frac{\nu\nu_m}{ \nu+\nu_m }
\bigg)
\big(  \|\nabla a_{(k)} \|^2 
+ \|\nabla b_{(k)}\|^2 \big)
\\
& \qquad
+ \frac{|\nu - \nu_m|}{4} \bigg\| \sqrt{ \frac{ \nu + \nu_m}{ |\nu-\nu_m| }} \nabla a_{(k)}  + \mbox{\rm sign} (\nu-\nu_m)  \sqrt{\frac{|\nu-\nu_m|}{\nu+\nu_m}} \nabla b_{(k-1)}
\bigg\|^2
\\
& \qquad
+ \frac{|\nu - \nu_m|}{4} \bigg\| \sqrt{ \frac{ \nu + \nu_m}{ |\nu-\nu_m| }} \nabla b_{(k)} + \mbox{\rm sign} (\nu-\nu_m)  \sqrt{\frac{|\nu-\nu_m|}{\nu+\nu_m}} \nabla a_{(k-1)}
\bigg\|^2
\\
&
\leq
\frac{4-d}{8}\Big( \frac{2(d-1)}{d^{3/2}} \Big)^{\frac{2d}{4-d}} 
\Big( \frac{d}{2} \Big)^{d/(4-d)}
\Big( \frac{ \nu+\nu_m }{\nu\nu_m} \Big)^{\frac{d}{4-d}}
\gamma_n^{\frac{4}{4-d}}
\Big( 
\|a_{(k-1)}\|^{2} + \|b_{(k-1)}\|^{2}
\Big)
\\
& 
\qquad
+
\bigg(
\frac{(\nu - \nu_m)^2}{4(\nu+\nu_m)} 
+
\frac{1}{4}
\frac{\nu\nu_m}{ \nu+\nu_m }
\bigg)
\big( \| \nabla a_{(k-1)} \|^2 + \| \nabla b_{(k-1)} \|^2 \big) 
,
\notag
\end{align*}
\color{red}
or
\begin{align*} 
& 
\bigg(
\frac{2}{\tau_n} 
-
\frac{4-d}{8}\Big( \frac{2(d-1)}{d^{3/2}} \Big)^{\frac{2d}{4-d}} 
\Big( \frac{d}{2} \Big)^{d/(4-d)}
\Big( \frac{ \nu+\nu_m }{\nu\nu_m} \Big)^{\frac{d}{4-d}}
\gamma_n^{\frac{4}{4-d}}
\bigg)
\Big( 
\|a_{(k)}\|^{2} + \|b_{(k)}\|^{2}
\Big)
\\
& 
\qquad
+
\frac{\nu^2 + \nu\nu_m + \nu_m^2 }{ 4(\nu+\nu_m) }
\big(  \|\nabla a_{(k)} \|^2 
+ \|\nabla b_{(k)}\|^2 \big)
\\
& \qquad
+ \frac{|\nu - \nu_m|}{4} \bigg\| \sqrt{ \frac{ \nu + \nu_m}{ |\nu-\nu_m| }} \nabla a_{(k)}  + \mbox{\rm sign} (\nu-\nu_m)  \sqrt{\frac{|\nu-\nu_m|}{\nu+\nu_m}} \nabla b_{(k-1)}
\bigg\|^2
\\
& \qquad
+ \frac{|\nu - \nu_m|}{4} \bigg\| \sqrt{ \frac{ \nu + \nu_m}{ |\nu-\nu_m| }} \nabla b_{(k)} + \mbox{\rm sign} (\nu-\nu_m)  \sqrt{\frac{|\nu-\nu_m|}{\nu+\nu_m}} \nabla a_{(k-1)}
\bigg\|^2
\\
&
\leq
\frac{4-d}{8}\Big( \frac{2(d-1)}{d^{3/2}} \Big)^{\frac{2d}{4-d}} 
\Big( \frac{d}{2} \Big)^{d/(4-d)}
\Big( \frac{ \nu+\nu_m }{\nu\nu_m} \Big)^{\frac{d}{4-d}}
\gamma_n^{\frac{4}{4-d}}
\Big( 
\|a_{(k-1)}\|^{2} + \|b_{(k-1)}\|^{2}
\Big)
\\
& 
\qquad
+
\frac{\nu^2 - \nu\nu_m + \nu_m^2}{4(\nu+\nu_m)} 
\big( \| \nabla a_{(k-1)} \|^2 + \| \nabla b_{(k-1)} \|^2 \big) 
,
\notag
\end{align*}
\color{red}
so, using Wenlong's notation $\delta = \Big( \frac{2(d-1)}{d^{3/2}} \Big)^{\frac{d}{4}}$, we have
\begin{align*} 
& 
\bigg(
\frac{2}{\tau_n} 
-
\frac{4-d}{8} \big( \delta^2\gamma_n \big)^{\frac{4}{4-d}} 
\Big( \frac{d}{2} \frac{ \nu+\nu_m }{\nu\nu_m} \Big)^{\frac{d}{4-d}}
\bigg)
\Big( 
\|a_{(k)}\|^{2} + \|b_{(k)}\|^{2}
\Big)
\\
& 
\qquad
+
\frac{\nu^2 + \nu\nu_m + \nu_m^2 }{ 4(\nu+\nu_m) }
\big(  \|\nabla a_{(k)} \|^2 
+ \|\nabla b_{(k)}\|^2 \big)
\\
& \qquad
+ \frac{|\nu - \nu_m|}{4} \bigg\| \sqrt{ \frac{ \nu + \nu_m}{ |\nu-\nu_m| }} \nabla a_{(k)}  + \mbox{\rm sign} (\nu-\nu_m)  \sqrt{\frac{|\nu-\nu_m|}{\nu+\nu_m}} \nabla b_{(k-1)}
\bigg\|^2
\\
& \qquad
+ \frac{|\nu - \nu_m|}{4} \bigg\| \sqrt{ \frac{ \nu + \nu_m}{ |\nu-\nu_m| }} \nabla b_{(k)} + \mbox{\rm sign} (\nu-\nu_m)  \sqrt{\frac{|\nu-\nu_m|}{\nu+\nu_m}} \nabla a_{(k-1)}
\bigg\|^2
\\
&
\leq
\frac{4-d}{8} \big(  \delta^2 \gamma_n \big) ^{\frac{4}{4-d}} 
\Big( \frac{d}{2} \frac{ \nu+\nu_m }{\nu\nu_m} \Big)^{\frac{d}{4-d}}
\Big( 
\|a_{(k-1)}\|^{2} + \|b_{(k-1)}\|^{2}
\Big)
\\
& 
\qquad
+
\frac{\nu^2 - \nu\nu_m + \nu_m^2}{4(\nu+\nu_m)} 
\big( \| \nabla a_{(k-1)} \|^2 + \| \nabla b_{(k-1)} \|^2 \big) 
,
\notag
\end{align*}
hence dropping the positive terms in the LHS yields
\normalcolor
\end{confidential}
        \begin{align*} 
            & 
            \bigg(
            \frac{2}{\tau_n} 
            -
            \frac{4-d}{8} \big( \delta^2\gamma_n \big)^{\frac{4}{4-d}} 
            \Big( \frac{d}{2} \frac{ \nu+\nu_m }{\nu\nu_m} \Big)^{\frac{d}{4-d}}
            \bigg)
            \big( 
            \|a_{(k)}\|^{2} + \|b_{(k)}\|^{2}
            \big)
            \\
            & 
            \qquad
            +
            \frac{\nu^2 + \nu\nu_m + \nu_m^2 }{ 4(\nu+\nu_m) }
            \big(  \|\nabla a_{(k)} \|^2 
            + \|\nabla b_{(k)}\|^2 \big)
            \\
            &
            \leq
            \frac{4-d}{8} \big(  \delta^2 \gamma_n \big) ^{\frac{4}{4-d}} 
            \Big( \frac{d}{2} \frac{ \nu+\nu_m }{\nu\nu_m} \Big)^{\frac{d}{4-d}}
            \big( 
            \|a_{(k-1)}\|^{2} + \|b_{(k-1)}\|^{2}
            \big)
            \\
            & 
            \qquad
            +
            \frac{\nu^2 - \nu\nu_m + \nu_m^2}{4(\nu+\nu_m)} 
            \big( \| \nabla a_{(k-1)} \|^2 + \| \nabla b_{(k-1)} \|^2 \big) .
            \notag
        \end{align*}
\begin{confidential}
            \color{red}
            As above, see \eqref{linear-conv-goal1}, we would like the time step $\tau_n$ to be chosen in a way to satisfy
            \begin{align*}
            & 
            \frac{2}{\tau_n} 
            -
            \frac{4-d}{8} \big( \delta^2\gamma_n \big)^{\frac{4}{4-d}} 
            \Big( \frac{d}{2} \frac{ \nu+\nu_m }{\nu\nu_m} \Big)^{\frac{d}{4-d}}
            \\
            & 
            \geq
            (1+z)
            \frac{4-d}{8} \big(  \delta^2 \gamma_n \big) ^{\frac{4}{4-d}} 
            \Big( \frac{d}{2} \frac{ \nu+\nu_m }{\nu\nu_m} \Big)^{\frac{d}{4-d}}
            \end{align*}
            or
            \begin{align*}
            & 
            \frac{2}{\tau_n}  
            \geq
            (2+z)
            \frac{4-d}{8} \big(  \delta^2 \gamma_n \big) ^{\frac{4}{4-d}} 
            \Big( \frac{d}{2} \frac{ \nu+\nu_m }{\nu\nu_m} \Big)^{\frac{d}{4-d}},
            \end{align*}
            where $z$ was defined in \eqref{eq:z}.
            Hence
            \begin{align*}
            & 
            \frac{\cancel{2}}{\tau_n}  
            \geq
            \frac{ \cancel{2} ( \nu^2 + \nu_m^2 ) }{\nu^2 - \nu\nu_m + \nu_m^2 }
            \frac{4-d}{8} \big(  \delta^2 \gamma_n \big) ^{\frac{4}{4-d}} 
            \Big( \frac{d}{2} \frac{ \nu+\nu_m }{\nu\nu_m} \Big)^{\frac{d}{4-d}}
            ,
            \end{align*}
            i.e., 
            \begin{align*}
            & 
            \tau_n
            \leq
            \frac{8}{4-d}
            \big(  \delta^2 \gamma_n \big) ^{\frac{-4}{4-d}} 
            \frac{\nu^2 - \nu\nu_m + \nu_m^2 }{ \nu^2 + \nu_m^2  }
            \Big( \frac{d}{2} \frac{ \nu+\nu_m }{\nu\nu_m} \Big)^{-\frac{d}{4-d}}
            \\
            & 
            = 
            \frac{8}{4-d}
            \frac{1}{(  \delta^2 \gamma_n ) ^{4/(4-d)} }
            \frac{\nu^2 - \nu\nu_m + \nu_m^2 }{ \nu^2 + \nu_m^2  }
            \Big(\frac{2\nu\nu_m}{ d( \nu+\nu_m ) } \Big)^{d/(4-d)}
            ,
            \end{align*}
            which is the time restriction \eqref{time_cond}.
            \\
            This then gives
            \begin{align*} 
            & 
            \frac{\nu^2 + \nu\nu_m + \nu_m^2}{\nu^2 + \nu_m^2 - \nu\nu_m}
            \frac{4-d}{8} \big( \delta^2\gamma_n \big)^{\frac{4}{4-d}} 
            \Big( \frac{d}{2} \frac{ \nu+\nu_m }{\nu\nu_m} \Big)^{\frac{d}{4-d}}
            \big( 
            \|a_{(k)}\|^{2} + \|b_{(k)}\|^{2}
            \big)
            \\
            & 
            \qquad
            +
            \frac{\nu^2 + \nu\nu_m + \nu_m^2 }{ 4(\nu+\nu_m) }
            \big(  \|\nabla a_{(k)} \|^2 
            + \|\nabla b_{(k)}\|^2 \big)
            \\
            &
            \leq
            \frac{4-d}{8} \big(  \delta^2 \gamma_n \big) ^{\frac{4}{4-d}} 
            \Big( \frac{d}{2} \frac{ \nu+\nu_m }{\nu\nu_m} \Big)^{\frac{d}{4-d}}
            \big( 
            \|a_{(k-1)}\|^{2} + \|b_{(k-1)}\|^{2}
            \big)
            \\
            & 
            \qquad
            +
            \frac{\nu^2 - \nu\nu_m + \nu_m^2}{4(\nu+\nu_m)} 
            \big( \| \nabla a_{(k-1)} \|^2 + \| \nabla b_{(k-1)} \|^2 \big)
            \notag
            ,
            \notag
            \end{align*}
            i.e., 
            \begin{align*} 
            & 
            \frac{\nu^2 + \nu\nu_m + \nu_m^2}{\nu^2 + \nu_m^2 - \nu\nu_m}
            \bigg(
            \frac{4-d}{8} \big( \delta^2\gamma_n \big)^{\frac{4}{4-d}} 
            \Big( \frac{d}{2} \frac{ \nu+\nu_m }{\nu\nu_m} \Big)^{\frac{d}{4-d}}
            \big( 
            \|a_{(k)}\|^{2} + \|b_{(k)}\|^{2}
            \big)
            \\
            & 
            \qquad
            +
            \frac{\nu^2 - \nu\nu_m + \nu_m^2 }{ 4(\nu+\nu_m) }
            \big(  \|\nabla a_{(k)} \|^2 
            + \|\nabla b_{(k)}\|^2 \big)
            \bigg)
            \\
            &
            \leq
            \frac{4-d}{8} \big(  \delta^2 \gamma_n \big) ^{\frac{4}{4-d}} 
            \Big( \frac{d}{2} \frac{ \nu+\nu_m }{\nu\nu_m} \Big)^{\frac{d}{4-d}}
            \big( 
            \|a_{(k-1)}\|^{2} + \|b_{(k-1)}\|^{2}
            \big)
            \\
            & 
            \qquad
            +
            \frac{\nu^2 - \nu\nu_m + \nu_m^2}{4(\nu+\nu_m)} 
            \big( \| \nabla a_{(k-1)} \|^2 + \| \nabla b_{(k-1)} \|^2 \big)
            \notag
            ,
            \notag
            \end{align*}
            or 
            \begin{align*} 
            & 
            {\cal G}_{(k)}
            \leq
            \frac{\nu^2 - \nu\nu_m + \nu_m^2}{\nu^2 + \nu\nu_m + \nu_m^2}
            {\cal G}_{(k-1)}
            =
            \Big( 1 - \frac{ 2\nu\nu_m }{\nu^2 + \nu\nu_m + \nu_m^2} \Big)
            {\cal G}_{(k-1)}
            \notag
            \end{align*}
            \normalcolor
            \\
\end{confidential}
Hence, under the time step restriction \eqref{time_cond}, the above estimate yields the linear convergence relation \eqref{eq:linear conv}.
    \end{proof}

\subsection{Stability and a balance of energy}
\label{subsec:Conservation}
Now we consider the stability of the algorithm \eqref{eq:CN-MHD}. 
We denote the discrete kinetic energy of the system by $\mathcal{E}^N$, the viscous dissipation rate by $\mathcal{D}^N$:
    \begin{equation}
        \label{energy_def}
        \begin{split}
            \mathcal{E}^N =& \frac{1}{2} \big( \|z_N^+ \|^2 + \|z_N^- \|^2 \big), \\
        \mathcal{D}^N =& \min\{\nu, \nu_m\} \sum^{N-1}_{n=1} \tau_n \big(\|\nabla z_{n+1/2}^+ \|^2 + \|\nabla z_{n+1/2}^- \|^2 \big) \\
        & + |v^{-}| \sum^{N-1}_{n=1} \tau_n \big\|\nabla z_{n+1/2}^+ + \text{sign}(v^{-}) \nabla z_{n+1/2}^-\big\|^2
        .
        \end{split}
    \end{equation}

    \begin{theorem}
        The algorithm in \eqref{eq:CN-MHD} is unconditionally stable and satisfies 
        \begin{align}
            &\mathcal{E}^N + \frac{1}{2}\mathcal{D}^N
            + \frac{|v^{-}|}{2} \sum^{N-1}_{n=1} \tau_n \big\|\nabla z_{n+1/2}^+ 
            + {\rm{sign}}(v^{-}) \nabla z_{n+1/2}^-\big\|^2 \notag \\
            &\leq \sum_{n=1}^{N-1} \frac{\tau_{n}}{\min\{ \nu, \nu_{m} \}} \| f(t_{n+1/2}) \|_{-1}^{2} + \mathcal{E}^0.
            \label{eq:zStab-conclusion}
        \end{align}
        Moreover, 
        in the absence of external force $f$, the following energy 
        balance
        holds
        \begin{gather}
            \mathcal{E}^N + \mathcal{D}^N = \mathcal{E}^0.
            \label{eq:Energy-eq}
        \end{gather}
    \end{theorem}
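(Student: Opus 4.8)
The plan is to run a standard discrete energy argument adapted to the Els\"asser coupling. First I would test the $+$ and $-$ equations of \eqref{eq:CN-MHD} with the divergence-free midpoint states $z_{n+1/2}^{+}$ and $z_{n+1/2}^{-}$ respectively, then add the two resulting identities, multiply by $\tau_n$, and sum over $n$. Because $\nabla\cdot z_{n+1/2}^{\pm}=0$, the pressure contributions $(\nabla p_{n+1/2}^{\pm},z_{n+1/2}^{\pm})$ drop out. The midpoint time difference gives the telescoping identity $\big(\tfrac{z_{n+1}^{\pm}-z_n^{\pm}}{\tau_n},z_{n+1/2}^{\pm}\big)=\tfrac{1}{2\tau_n}\big(\|z_{n+1}^{\pm}\|^2-\|z_n^{\pm}\|^2\big)$. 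The linear transport term vanishes, $\mp\big((B_\circ\cdot\nabla)z_{n+1/2}^{\pm},z_{n+1/2}^{\pm}\big)=0$, since $B_\circ$ is constant (hence divergence-free) and $z_{n+1/2}^{\pm}\in X$; and the cross-coupling convection vanishes by the skew-symmetry encoded in \eqref{eq:N-form}--\eqref{eq:N-form-equi}, namely $\big((z_{n+1/2}^{\mp}\cdot\nabla)z_{n+1/2}^{\pm},z_{n+1/2}^{\pm}\big)=\mathcal{N}(z_{n+1/2}^{\mp},z_{n+1/2}^{\pm},z_{n+1/2}^{\pm})=0$, using $\nabla\cdot z_{n+1/2}^{\mp}=0$.

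The crux is the diffusion. Writing $\xi=\nabla z_{n+1/2}^+$ and $\eta=\nabla z_{n+1/2}^-$, adding the two tested equations leaves the quadratic form $\nu^+(\|\xi\|^2+\|\eta\|^2)+2\nu^-(\xi,\eta)$, which is \emph{indefinite} because $\nu^-$ may be negative. I would complete the square using the sign of $\nu^-$: since $\mathrm{sign}(\nu^-)^2=1$ and $|\nu^-|\,\mathrm{sign}(\nu^-)=\nu^-$, the expansion $|\nu^-|\,\|\xi+\mathrm{sign}(\nu^-)\eta\|^2=|\nu^-|(\|\xi\|^2+\|\eta\|^2)+2\nu^-(\xi,\eta)$, combined with the elementary identity $\min\{\nu,\nu_m\}+|\nu^-|=\nu^+$, yields the exact decomposition
\begin{align*}
\nu^+(\|\xi\|^2+\|\eta\|^2)+2\nu^-(\xi,\eta)=\min\{\nu,\nu_m\}(\|\xi\|^2+\|\eta\|^2)+|\nu^-|\,\|\xi+\mathrm{sign}(\nu^-)\eta\|^2.
\end{align*}
This is manifestly nonnegative and, after multiplication by $\tau_n$ and summation, is precisely the dissipation $\mathcal{D}^N$ in \eqref{energy_def}. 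Collecting all terms produces the discrete balance $\mathcal{E}^N+\mathcal{D}^N=\mathcal{E}^0+\sum_n\tau_n\big(f(t_{n+1/2}),z_{n+1/2}^++z_{n+1/2}^-\big)$, valid for an arbitrary sequence of steps $\tau_n>0$, which is the source of unconditional stability. Setting $f\equiv 0$ gives the energy balance \eqref{eq:Energy-eq} at once.

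For the general bound \eqref{eq:zStab-conclusion} I would split $\mathcal{D}^N$, keeping $K:=\tfrac12\mathcal{D}^N+\tfrac{|\nu^-|}{2}\sum_n\tau_n\|\xi+\mathrm{sign}(\nu^-)\eta\|^2$ on the left (these are exactly the dissipation terms appearing in \eqref{eq:zStab-conclusion}) and retaining the complementary reservoir $R:=\tfrac12\min\{\nu,\nu_m\}\sum_n\tau_n(\|\xi\|^2+\|\eta\|^2)$ to absorb the forcing, so that $\mathcal{D}^N=K+R$. I would then bound the force using the dual norm \eqref{eq:dual-norm} applied to $w=z_{n+1/2}^++z_{n+1/2}^-\in X$, giving $\big(f,z_{n+1/2}^++z_{n+1/2}^-\big)\le\|f\|_{-1}\,\|\xi+\eta\|$, followed by Young's inequality with weight $\min\{\nu,\nu_m\}/2$ and the elementary estimate $\|\xi+\eta\|^2\le 2(\|\xi\|^2+\|\eta\|^2)$, which produces $\tfrac{\min\{\nu,\nu_m\}}{2}(\|\xi\|^2+\|\eta\|^2)+\tfrac{1}{\min\{\nu,\nu_m\}}\|f\|_{-1}^2$. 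Multiplying by $\tau_n$ and summing, the first piece cancels exactly against the reservoir $R$, leaving $\mathcal{E}^N+K\le\mathcal{E}^0+\sum_n\tfrac{\tau_n}{\min\{\nu,\nu_m\}}\|f(t_{n+1/2})\|_{-1}^2$, which is \eqref{eq:zStab-conclusion}.

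The telescoping and the skew-symmetry cancellations are routine. The main obstacle, and really the only nontrivial point, is the diffusion step: recognizing that the cross-diffusion $2\nu^-(\nabla z_{n+1/2}^+,\nabla z_{n+1/2}^-)$, which carries no definite sign, is absorbed by completing the square against the symmetric part with the factor $\mathrm{sign}(\nu^-)$, so that the total dissipation is nonnegative and matches the definition of $\mathcal{D}^N$ \emph{exactly} rather than up to a constant. The accompanying bookkeeping—threading the extra $\tfrac{|\nu^-|}{2}$-term and choosing the Young weight so the forcing emerges with the sharp coefficient $\tau_n/\min\{\nu,\nu_m\}$—is the part that must be carried out with care.
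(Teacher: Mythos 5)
Your proposal is correct and follows essentially the same route as the paper: test with the midpoint values so the pressure and convective terms vanish by skew-symmetry, absorb the indefinite cross-diffusion $2\nu^-(\nabla z^+_{n+1/2},\nabla z^-_{n+1/2})$ by completing the square with $\mathrm{sign}(\nu^-)$ (the paper's polarization identity, using $\nu^+-|\nu^-|=\min\{\nu,\nu_m\}$), and then bound the forcing via the dual norm and Young's inequality with weight $\min\{\nu,\nu_m\}/2$ before summing. The only cosmetic differences are that you establish the exact forced balance first and then split the dissipation, and you treat $(f,z^+_{n+1/2}+z^-_{n+1/2})$ jointly rather than term by term; both yield the identical constants in \eqref{eq:zStab-conclusion} and \eqref{eq:Energy-eq}.
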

\begin{proof}
We first take inner product of \eqref{eq:CN-MHD} with $2 z_{n+1/2}^{\pm}$, and use the skew-symmetry properties \eqref{eq:N-form}, \eqref{eq:N-form-equi} to obtain
        \begin{gather}
            \frac{1}{\tau_n} \big( \|z_{n+1}^{\pm} \|^2 - \big\|z_{n}^{\pm} \|^2 \big)
            + 2 \nu^{+} \|\nabla z_{n+1/2}^{\pm} \|^2 
            + 2 \nu^{-} \big( z_{n+1/2}^{\mp}, z_{n+1/2}^{\pm} \big) \notag \\
            = 2 ( f(t_{n+1/2}), z_{n+1/2}^{\pm}). 
            \label{eq:zStab-eq1}
        \end{gather}
Then we use the following polarization identity 
        \begin{align*}
& 
\nu^{-} \big( \nabla z_{n+1/2}^{\mp}, \nabla z_{n+1/2}^{\pm} \big) 
\\
& 
=
 \frac{|\nu^{-}|}{2} \big\|\nabla z_{n+1/2}^{\mp} + \text{sign}(\nu^{-}) \nabla z_{n+1/2}^{\pm} \big\|^2 
 - \frac{|\nu^{-}|}{2} \big( \|\nabla z_{n+1/2}^{\mp} \|^2 + \|\nabla z_{n+1/2}^{\pm} \|^2 \big),
        \end{align*}
to obtain
\begin{confidential}
        \color{darkblue}
        \begin{align*}
            &\frac{1}{\tau_n} \big( \|z_{n+1}^+ \|^2 + \|z_{n+1}^- \|^2 
            - \|z_{n}^+ \|^2 - \|z_{n}^- \|^2 \big) 
            + 2(\nu^{+} - |\nu^{-}|) \big( \|\nabla z_{n+1/2}^+ \|^2 + \|\nabla z_{n+1/2}^- \|^2 \big)  \\
            & + 2|\nu^{-}| \big\|\nabla z_{n+1/2}^+  + \text{sign}(\nu^{-}) \nabla z_{n+1/2}^-\big\|^2 \\
            &= 2 \big( ( f(t_{n+1/2}), z_{n+1/2}^{+}) + ( f(t_{n+1/2}), z_{n+1/2}^{-}) \big).
        \end{align*}
        \normalcolor
\end{confidential}
 \begin{align}
&
\label{eq:zStab-eq2}
\frac{1}{2\tau_n} \big( \|z_{n+1}^+ \|^2 + \|z_{n+1}^- \|^2 
            - \|z_{n}^+ \|^2 - \|z_{n}^- \|^2 \big) 
\\
& 
\quad
+ (\nu^{+} - |\nu^{-}|) \big( \|\nabla z_{n+1/2}^+ \|^2 + \|\nabla z_{n+1/2}^- \|^2 \big)  
+ |\nu^{-}| \big\|\nabla z_{n+1/2}^+  + \text{sign}(\nu^{-}) \nabla z_{n+1/2}^-\big\|^2 
\notag 
\\
&
= ( f(t_{n+1/2}), z_{n+1/2}^{+}) + ( f(t_{n+1/2}), z_{n+1/2}^{-})
.
\notag
\end{align}
Using 
$\nu^{+} - |\nu^{-}| = \min\{ \nu, \nu_{m} \}$, the definition of dual norm \eqref{eq:dual-norm}, and Young's inequality 
\begin{confidential}
            \color{darkblue}
            \begin{align*}
                &( f(t_{n+1/2}), z_{n+1/2}^{+}) + ( f(t_{n+1/2}), z_{n+1/2}^{-}) \\
                \leq& \| f(t_{n+1/2}) \|_{-1} \| \nabla z_{n+1/2}^{+} \| 
                + \| f(t_{n+1/2}) \|_{-1} \| z_{n+1/2}^{-} \|  \\
                \leq& \frac{1}{2 \min\{ \nu, \nu_{m} \}} \| f(t_{n+1/2}) \|_{-1}^{2}
                + \frac{\min\{ \nu, \nu_{m} \}}{2} \| \nabla z_{n+1/2}^{+} \|^{2} 
                + \frac{1}{2 \min\{ \nu, \nu_{m} \}} \| f(t_{n+1/2}) \|_{-1}^{2}
                + \frac{\min\{ \nu, \nu_{m} \}}{2} \| \nabla z_{n+1/2}^{-} \|^{2}
            \end{align*}
            \normalcolor
\end{confidential}
we have
\begin{align}
&
\frac{1}{2\tau_n} \big( \|z_{n+1}^+ \|^2 + \|z_{n+1}^- \|^2 
            - \|z_{n}^+ \|^2 - \|z_{n}^- \|^2 \big) 
\label{eq:zStab-eq3}
\\
& 
+ \frac{1}{2}\min\{ \nu, \nu_{m} \} \big( \|\nabla z_{n+1/2}^+ \|^2 + \|\nabla z_{n+1/2}^- \|^2 \big)  
+ |\nu^{-}| \big\|\nabla z_{n+1/2}^+  + \text{sign}(\nu^{-}) \nabla z_{n+1/2}^-\big\|^2 
\notag 
\\
&
\leq 
\frac{1}{\min\{ \nu, \nu_{m} \}} \| f(t_{n+1/2}) \|_{-1}^{2}.
\notag
        \end{align}
\begin{confidential}
            \color{darkblue}
            \begin{align*}
                &\frac{1}{2} \big( \|z_{n+1}^+ \|^2 + \|z_{n+1}^- \|^2 
                - \|z_{n}^+ \|^2 - \|z_{n}^- \|^2 \big) 
                + \frac{\min\{ \nu, \nu_{m} \}\tau_{n}}{2} \big( \|\nabla z_{n+1/2}^+ \|^2 + \|\nabla z_{n+1/2}^- \|^2 \big)   \\
                & + |\nu^{-}| \tau_{n} \big\|\nabla z_{n+1/2}^+  + \text{sign}(\nu^{-}) \nabla z_{n+1/2}^-\big\|^2  \\
                &\leq \frac{\tau_{n}}{\min\{ \nu, \nu_{m} \}} \| f(t_{n+1/2}) \|_{-1}^{2}. 
            \end{align*}
            \begin{align*}
                &\frac{1}{2} \big( \|z_{N}^+ \|^2 + \|z_{N}^- \big)
                + \frac{\min\{ \nu, \nu_{m} \}}{2} \sum_{n=1}^{N-1} \tau_{n}
                \big( \|\nabla z_{n+1/2}^+ \|^2 + \|\nabla z_{n+1/2}^- \|^2 \big) \\
                &+ |\nu^{-}| \sum_{n=1}^{N-1} \tau_{n} \big\|\nabla z_{n+1/2}^+ + \text{sign}(\nu^{-}) \nabla z_{n+1/2}^-\big\|^2 \\
                &\leq \sum_{n=1}^{N-1} \frac{\tau_{n}}{\min\{ \nu, \nu_{m} \}} \| f(t_{n+1/2}) \|_{-1}^{2} + \frac{1}{2} \big( \|z_{0}^+ \|^2 + \|z_{0}^- \big)
            \end{align*}
            \normalcolor
\end{confidential}
Finally, if we multiply \eqref{eq:zStab-eq3} by $\tau_{n}$ and sum 
over $n$ from $1$ to $N-1$, we obtain \eqref{eq:zStab-conclusion}.
In the case when the source function $f$ vanishes, summation of \eqref{eq:zStab-eq2} over $n$ from $1$ to $N-1$ 
yields \eqref{eq:Energy-eq}.
    \end{proof}

    \subsection{Conservation of quadratic invariants
    : energy, cross-helicity and magnetic-helicity}
    \label{subsec:Quad-Invar} 
We note that the fully coupled system \eqref{eq:CN-MHD}
can be equivalently written as
    \begin{align}
    	\begin{cases}
    		\displaystyle \frac{u_{n+1} - u_{n}}{\tau_{n}} - \big( B_{n+1/2} \cdot \nabla \big) B_{n+1/2} + \big( u_{n+1/2} \cdot \nabla \big) u_{n+1/2}  
    		- \nu \Delta u_{n+1/2} 
\\ 
\displaystyle 
\qquad 
+ \nabla \Big( \frac{p_{n+1/2}^+ + p_{n+1/2}^-}{2} \Big) = f(t_{n+1/2}),  
\vspace{0.2cm}
\\
            \displaystyle \frac{B_{n+1} - B_{n}}{\tau_n} + \big( u_{n+1/2} \cdot \nabla \big) B_{n+1/2}
            - \big( B_{n+1/2} \cdot \nabla \big) u_{n+1/2}  - \nu_m \Delta B_{n+1/2} \\
            \displaystyle \qquad \qquad \qquad \qquad \qquad \quad \ + \nabla \Big( \frac{p_{n+1/2}^+ - p_{n+1/2}^-}{2} \Big) = 0, 
\\
\nabla \cdot u_{n+1/2} = 0, \qquad 
\nabla \cdot B_{n+1/2} = 0,
\end{cases}
\label{eq:CN-MHD-equi}
\end{align}
where by {\it Step 2} in \eqref{eq:BE-MHD} we have
$u_{n+1/2} = (u_{n+1} + u_{n})/2$ and $B_{n+1/2} = (B_{n+1} + B_{n})/2$. 
\\
We denote the energy, the cross-helicity and the magnetic-helicity corresponding to the solution at time $t_{n}$ of algorithm \eqref{eq:CN-MHD-equi} by
\begin{gather*}
        \mathcal{E}_{n} = \frac{1}{2} \int_{\Omega} (|u_{n}|^{2} + |B_{n}|^{2}) dx, \quad 
        \mathcal{H}_{C_{n}} = \frac{1}{2} \int_{\Omega} u_{n} \cdot B_{n} dx, \quad 
        \mathcal{H}_{M_{n}} = \frac{1}{2} \int_{\Omega} \mathbb{A}_{n} \cdot B_{n} dx. 
\end{gather*}

    \begin{theorem}\label{energy_cross_magnetic}
The algorithm in \eqref{eq:CN-MHD} or \eqref{eq:CN-MHD-equi}, conserves the energy, cross-helicity and magnetic-helicity 
\begin{align*}
{\cal E}_n = {\cal E}_0, 
\qquad
{\cal H}_{C_n} = {\cal H}_{C_0}, \qquad
{\cal H}_{M_n} = {\cal H}_{M_0},
\end{align*}
in the ideal case, i.e., in the absence of external forcing terms, and for zero kinematic viscosity and magnetic diffusivity.
    \end{theorem}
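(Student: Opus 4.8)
The plan is to exploit the fact that the midpoint discretization \eqref{eq:CN-MHD-equi}, in the ideal case ($\nu = \nu_m = 0$, $f = 0$), preserves exactly every quadratic invariant of the MHD system, and to verify this directly for each of the three functionals by testing the scheme against suitable midpoint quantities and telescoping in $n$. Throughout I would use that $u_{n+1/2} = (u_{n+1}+u_n)/2$ and $B_{n+1/2} = (B_{n+1}+B_n)/2$ are divergence-free, so that the trilinear convection terms are skew-symmetric (cf. \eqref{eq:N-form}, \eqref{eq:N-form-equi}) and the pressure gradients drop out against them. Conservation at a single step then upgrades to $\mathcal{E}_n = \mathcal{E}_0$, $\mathcal{H}_{C_n} = \mathcal{H}_{C_0}$, $\mathcal{H}_{M_n} = \mathcal{H}_{M_0}$ by induction on $n$.

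For the energy, I would test the momentum equation of \eqref{eq:CN-MHD-equi} with $u_{n+1/2}$ and the induction equation with $B_{n+1/2}$, then add. The discrete time-derivatives collapse via $(u_{n+1}-u_n, u_{n+1/2}) = \frac{1}{2}(\|u_{n+1}\|^2 - \|u_n\|^2)$ (and likewise for $B$), producing exactly $(\mathcal{E}_{n+1} - \mathcal{E}_n)/\tau_n$. The self-convection terms $((u_{n+1/2}\cdot\nabla)u_{n+1/2}, u_{n+1/2})$ and $((u_{n+1/2}\cdot\nabla)B_{n+1/2}, B_{n+1/2})$ vanish by skew-symmetry, the two Lorentz couplings combine into $-\int_\Omega (B_{n+1/2}\cdot\nabla)(u_{n+1/2}\cdot B_{n+1/2})\,dx = 0$, and the pressure terms vanish by incompressibility. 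For the cross-helicity I would instead test the momentum equation with $B_{n+1/2}$ and the induction equation with $u_{n+1/2}$ and add. After the cross products cancel, the time-derivative terms combine into $\frac{1}{\tau_n}\big((u_{n+1}, B_{n+1}) - (u_n, B_n)\big) = \frac{2}{\tau_n}(\mathcal{H}_{C_{n+1}} - \mathcal{H}_{C_n})$; the terms $((B_{n+1/2}\cdot\nabla)B_{n+1/2}, B_{n+1/2})$ and $((B_{n+1/2}\cdot\nabla)u_{n+1/2}, u_{n+1/2})$ vanish by skew-symmetry, while the remaining pair $((u_{n+1/2}\cdot\nabla)u_{n+1/2}, B_{n+1/2}) + ((u_{n+1/2}\cdot\nabla)B_{n+1/2}, u_{n+1/2})$ integrates to zero since $\nabla\cdot u_{n+1/2} = 0$. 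The pressure gradients again contribute nothing.

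The magnetic helicity is the delicate case, since it is quadratic in the vector potential rather than in $(u, B)$. The key step is to uncurl the discrete induction equation. In the ideal case the identity $(u\cdot\nabla)B - (B\cdot\nabla)u = -\nabla\times(u\times B)$ (valid for divergence-free $u, B$) recasts it as $\frac{B_{n+1}-B_n}{\tau_n} = \nabla\times(u_{n+1/2}\times B_{n+1/2}) - \nabla r_{n+1/2}$, where $r_{n+1/2} = (p_{n+1/2}^{+} - p_{n+1/2}^{-})/2$. Since the right-hand side is a curl plus a gradient and $B_n = \nabla\times\mathbb{A}_n$, I can define discrete potentials $\mathbb{A}_n$ so that $\frac{\mathbb{A}_{n+1}-\mathbb{A}_n}{\tau_n} = u_{n+1/2}\times B_{n+1/2} - \nabla\phi_{n+1/2}$ for a scalar $\phi_{n+1/2}$, with $\mathbb{A}_{n+1/2} = (\mathbb{A}_{n+1}+\mathbb{A}_n)/2$ satisfying $\nabla\times\mathbb{A}_{n+1/2} = B_{n+1/2}$. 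Writing the bilinear form $a(\mathbb{X},\mathbb{Y}) = \int_\Omega \mathbb{X}\cdot(\nabla\times\mathbb{Y})\,dx$, which is symmetric under the boundary/gauge conditions in force, the midpoint factorization $a(\mathbb{X},\mathbb{X}) - a(\mathbb{Y},\mathbb{Y}) = a(\mathbb{X}+\mathbb{Y}, \mathbb{X}-\mathbb{Y})$ gives $\mathcal{H}_{M_{n+1}} - \mathcal{H}_{M_n} = \int_\Omega \mathbb{A}_{n+1/2}\cdot(B_{n+1}-B_n)\,dx$. Substituting the induction equation, the curl term becomes $\tau_n\int_\Omega B_{n+1/2}\cdot(u_{n+1/2}\times B_{n+1/2})\,dx = 0$ by orthogonality, and the pressure contribution $-\tau_n\int_\Omega \mathbb{A}_{n+1/2}\cdot\nabla r_{n+1/2}\,dx$ vanishes after integration by parts in the Coulomb gauge $\nabla\cdot\mathbb{A}_{n+1/2} = 0$ (the boundary term being removed by the admissible boundary conditions). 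I expect this gauge-and-boundary bookkeeping, i.e. making precise the discrete uncurling and the vanishing of the pressure term, to be the main obstacle; the energy and cross-helicity identities are routine once the skew-symmetry cancellations are organized.
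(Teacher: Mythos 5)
Your proposal is correct and follows essentially the same route as the paper's proof: midpoint testing of \eqref{eq:CN-MHD-equi} with $u_{n+1/2}$, $B_{n+1/2}$ and skew-symmetry of $\mathcal{N}$ for energy and cross-helicity, and, for magnetic helicity, the Girault--Raviart potential $\mathbb{A}_n$ with $B_n=\nabla\times\mathbb{A}_n$, $\nabla\cdot\mathbb{A}_n=0$, $\mathbb{A}_n|_{\partial\Omega}=0$, the curl identity for the convective terms, and the orthogonality of $B_{n+1/2}$ with $u_{n+1/2}\times B_{n+1/2}$ --- your polarization step via the symmetric form $a(X,X)-a(Y,Y)=a(X+Y,X-Y)$ is exactly the paper's Gauss-divergence identity $\int_\Omega \nabla\cdot(\mathbb{A}_{n+1}\times\mathbb{A}_n)\,dx=0$ in different packaging. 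One cosmetic caveat: your intermediate uncurled evolution $(\mathbb{A}_{n+1}-\mathbb{A}_n)/\tau_n=u_{n+1/2}\times B_{n+1/2}-\nabla\phi_{n+1/2}$ is inconsistent with keeping the $-\nabla r_{n+1/2}$ term (its curl does not reproduce it), but since the argument you actually execute only uses $\nabla\times\mathbb{A}_{n+1/2}=B_{n+1/2}$, which follows from linearity of the curl, the proof is unaffected.
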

    \begin{proof}
We assume that $f = 0$, and $\nu = \nu_{m} = 0$.
First, we take inner product of first equation of \eqref{eq:CN-MHD-equi} with $u_{n+1/2}$, 
the second equation of \eqref{eq:CN-MHD-equi} with $B_{n+1/2}$ 
and use the skew-symmetry properties \eqref{eq:N-form}, \eqref{eq:N-form-equi} 
to obtain
        \begin{gather}
            \frac{1}{2} \big( \|u_{n+1} \|^2 + \|B_{n+1} \|^2 \big)
            - \frac{1}{2} \big( \|u_{n} \|^2 + \|B_{n} \|^2 \big) = 0. 
            \label{eq:Stab-eq2}
        \end{gather}
Summation of \eqref{eq:Stab-eq2} over $n$ from $1$ to $N-1$ and 
proves the conservation of the model energy, i.e., $\mathcal{E}_{N} = \mathcal{E}_{0}$.
 
Secondly, 
we take the inner product of the first equation in \eqref{eq:CN-MHD-equi} with $B_{n+1/2}$, 
the second equation of \eqref{eq:CN-MHD-equi} with $u_{n+1/2}$, to obtain
\begin{align*}
& \frac{1}{2 \tau_{n}} \big( u_{n+1} - u_{n}, B_{n+1} + B_{n} \big)
	+ \big( (u_{n+1/2} \cdot \nabla) u_{n+1/2}, B_{n+1/2} \big) = 0, 
\notag 
\\
& 
\frac{1}{2 \tau_{n}} \big( B_{n+1} - B_{n}, u_{n+1} + u_{n} \big)
            + \big( (u_{n+1/2} \cdot \nabla) B_{n+1/2}, u_{n+1/2} \big) = 0
            ,
\end{align*}
and using \eqref{eq:N-form}, \eqref{eq:N-form-equi},
these yield
\begin{align}
            \frac{1}{\tau_n} \int_{\Omega} u_{n+1} \cdot B_{n+1} dx 
            - \frac{1}{\tau_n} \int_{\Omega} u_{n} \cdot B_{n} dx = 0,
            \label{eq:Stab-eq4}
\end{align}
which proves the conservation of cross helicity, i.e.,
$\mathcal{H}_{C_{N}} = \mathcal{H}_{C_{0}}$.

For the result on the magnetic helicity, we  begin by recalling 
(see e.g., \cite[Theorem 3.2, Chapter I]{MR851383})
that,  since $\nabla \cdot B_{n} = 0$, there exists a function $\mathbb{A}_{n} \in \big(H^1(\Omega) \big)^{d}$ such that $B_{n} = \nabla \times \mathbb{A}_{n}$ for all $n$. Moreover, $\nabla \cdot \mathbb{A}_{n} = 0$ and $\mathbb{A}_{n} = 0$ on the boundary.
Next, 
we take the inner product of the second equation 
in
\eqref{eq:CN-MHD-equi} with $\mathbb{A}_{n+1/2} = (\mathbb{A}_{n+1} + \mathbb{A}_{n})/2$
        \begin{align}
            &\frac{1}{\tau_n} \big( B_{n+1} - B_{n}, \mathbb{A}_{n+1/2} \big) \notag \\
            &+ \big( (u_{n+1/2} \cdot \nabla) B_{n+1/2}, \mathbb{A}_{n+1/2} \big)
            - \big( (B_{n+1/2} \cdot \nabla) u_{n+1/2}, \mathbb{A}_{n+1/2} \big) = 0.
            \label{eq:Stab-eq5}
        \end{align}
Then, 
we use the identity 
        \begin{align*}
            \nabla \times ( B \times u ) 
            = (\nabla \cdot u)B + (u \cdot \nabla)B - (\nabla \cdot B)u - (B \cdot \nabla)u, 
        \end{align*}
to write the convective terms as follows
\begin{align}
&
\big( (u_{n+1/2} \cdot \nabla) B_{n+1/2}, \mathbb{A}_{n+1/2} \big)
            - \big( (B_{n+1/2} \cdot \nabla) u_{n+1/2}, \mathbb{A}_{n+1/2} \big) 
\label{eq:Stab-eq6} 
\\
&
=
 \big( \nabla \times ( B_{n+1/2} \times u_{n+1/2} ), \mathbb{A}_{n+1/2} \big)
            - \big( (\nabla \cdot u_{n+1/2}) B_{n+1/2}, \mathbb{A}_{n+1/2} \big) 
\notag 
\\
&
\qquad
+ \big( (\nabla \cdot B_{n+1/2} ) u_{n+1/2}, \mathbb{A}_{n+1/2} \big) 
\notag 
\\
& 
= 
\big( \nabla \times ( (\nabla \times \mathbb{A}_{n+1/2} ) \times u_{n+1/2} ), \mathbb{A}_{n+1/2} \big) 
\notag 
\\
&
= \big( (\nabla \!\times\! \mathbb{A}_{n+1/2} ) \!\times\! u_{n+1/2}, \nabla \!\times\! \mathbb{A}_{n+1/2} \big) 
 - \int_{\partial{\Omega}} \!\!\! \big( (\nabla \!\times\! \mathbb{A}_{n+1/2} ) \!\times\! u_{n+1/2} \big) \!\times\! \mathbb{A}_{n+1/2} 
= 0, 
\notag 
\end{align}
where the last equality
comes from 
the homogeneous Dirichlet boundary condition on the velocity
$u|_{\partial{\Omega}} = 0$,
and the orthogonality of $\nabla \times \mathbb{A}_{n+1/2}$ and $(\nabla \times \mathbb{A}_{n+1/2}) \times u_{n+1/2}$. 
Using \eqref{eq:Stab-eq6} in \eqref{eq:Stab-eq5} 
gives
\begin{align}
            \int_{\Omega} \mathbb{A}_{n+1} \cdot B_{n+1} dx 
            + \int_{\Omega} \mathbb{A}_{n} \cdot B_{n+1} dx 
            - \int_{\Omega} \mathbb{A}_{n+1} \cdot B_{n} dx 
            = \int_{\Omega} \mathbb{A}_{n} \cdot B_{n} dx.
\label{eq:Stab-eq7}
\end{align}
Finally, 
by the Gauss' divergence theorem we have
\begin{gather*}
            \int_{\Omega} \mathbb{A}_{n} \cdot B_{n+1} dx 
            - \int_{\Omega} \mathbb{A}_{n+1} \cdot B_{n} dx
            = \int_{\Omega} \nabla \cdot \big( \mathbb{A}_{n+1} \times \mathbb{A}_{n} \big) dx 
            = 0,
        \end{gather*}
and therefore
\eqref{eq:Stab-eq7} 
implies the conservation of magnetic-helicity 
$\mathcal{H}_{M_{N}} = \mathcal{H}_{M_{0}}$.
    \end{proof}

    \subsection{Error Analysis}
    \label{subsec:Err-analy}
In this section, 
\color{red}$z^{\pm}(t_n)$ \normalcolor 
represent the true solutions 
of \eqref{eq:MHD-Elsa} at time $t_{n}$.
For the fully space-time discretization, $z_{n}^{\pm,h} \in X^{h}$ and $p_{n+1/2}^{\pm,h} \in Q^{h}$ are numerical approximations of $\color{red}z^{\pm}(t_{n})$ and $p(t_{n+1/2})$ respectively. 

\begin{confidential}
We denote 
\color{red}$z_{n+1/2}^{\pm} = (z^{\pm}(t_{n+1}) + z^{\pm}(t_{n}))/2$ : this is bad notation !!! \normalcolor
and 
{\color{blue}$z_{n+1/2}^{\pm,h} = (z_{n+1}^{\pm,h} + z_{n}^{\pm,h})/2$ this should not be a notation, it is a value}.
\end{confidential}

    The variational formulation of the fully-decoupled algorithm in \eqref{eq:CN-MHD} is: given $z_{n}^{\pm,h} \in X^h$, we solve $z_{n+1}^{\pm,h} \in X^h, p_{n+1/2}^{\pm,h} \in Q^h$ such that for all $(v^h, q^h) \in X^h \times Q^h$,
\begin{align}
\begin{cases}
\displaystyle \Big( \frac{z_{n+1}^{\pm,h} - z_n^{\pm,h}}{\tau_n}, v^h \Big)
            \mp \mathcal{N} \big(B_{\circ}, z_{n+1/2}^{\pm,h}, v^h \big) 
            + \mathcal{N} \big( z_{n+1/2}^{\mp,h}, z_{n+1/2}^{\pm,h}, v^h \big) 
\\ 
\displaystyle + \nu^+ \big( \nabla z_{n+1/2}^{\pm,h}, \nabla v^h \big)
            + \nu^- \big( \nabla z_{n+1/2}^{\mp,h}, \nabla v^h \big)
            - \big( p_{n+1/2}^{\pm,h}, \nabla \cdot v^h \big) 
            = \big( f(t_{n+1/2}), v^h \big),  
\\
 \displaystyle \big( \nabla \cdot z_{n+1/2}^{\pm,h}, q^{\pm,h} \big) = 0.
        \end{cases}
        \label{eq:CN-MHD-Weak}
    \end{align}
The following result 
recalls the consistency of the midpoint finite difference 
method.
\begin{lemma}
\label{lemma_taylor}
For any given 
nodes $\{t_n\}_{n=0}^{N}$ on the time interval $[0,T]$,  
if the mapping $u: [0,T] \to H^{\ell}(\Omega)$ is smooth enough, 
then
        \begin{align}
            \Big\| \big( u(\cdot,t_{n+1}) + u(\cdot,t_{n}) \big)/2 - u(\cdot,t_{n+1/2}) \Big\|_{\ell}
            \leq& C \tau_{n}^{3} \int_{t_n}^{t_{n+1}} \| u_{tt}(\cdot,t) \|_{\ell}^{2} dt, 
            \label{eq:consistency-eq1} \\
            \Big\| \frac{u(\cdot, t_{n+1}) - u(\cdot, t_{n})}{\tau_{n}} - u(\cdot,t_{n+1/2}) \Big\|_{\ell} 
            \leq& C \tau_{n}^{3} \int_{t_n}^{t_{n+1}} \| u_{ttt}(\cdot,t) \|_{\ell}^{2} dt.
            \label{eq:consistency-eq2}
        \end{align}
    \end{lemma}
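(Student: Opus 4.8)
The plan is to establish both estimates by Taylor expansion with integral (Lagrange) remainder about the midpoint $t_{n+1/2}$, treating $u$ as an $H^{\ell}(\Omega)$-valued function of $t$. The crucial structural feature is the midpoint symmetry $t_{n+1} = t_{n+1/2} + \tau_n/2$ and $t_n = t_{n+1/2} - \tau_n/2$, which forces the odd-order Taylor terms to cancel; Minkowski's integral inequality then lets me commute the spatial norm $\|\cdot\|_{\ell}$ past the temporal integral of the remainder.

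For \eqref{eq:consistency-eq1}, I would expand $u(\cdot,t_{n+1})$ and $u(\cdot,t_n)$ to second order about $t_{n+1/2}$, each remainder having the form $\int_{t_{n+1/2}}^{t_{n\pm1}} (t_{n\pm1}-s)\,u_{tt}(\cdot,s)\,ds$. Upon averaging the two expansions, the value and first-derivative contributions cancel, leaving only the combined second-order remainder. Taking $\|\cdot\|_{\ell}$, bounding the remainder weights by $\tau_n/2$, and applying the Cauchy--Schwarz inequality in time over the respective half-intervals converts the resulting $L^1$-in-time bound into one involving $\big(\int_{t_n}^{t_{n+1}} \|u_{tt}(\cdot,s)\|_{\ell}^2\,ds\big)^{1/2}$ together with an extra factor $\tau_n^{1/2}$; squaring then produces the stated bound (for the square of the $H^{\ell}$ norm).

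For \eqref{eq:consistency-eq2} the argument is identical in spirit but carried one order higher. Expanding $u(\cdot,t_{n\pm1})$ to third order about $t_{n+1/2}$ and forming the difference quotient $\big(u(\cdot,t_{n+1}) - u(\cdot,t_n)\big)/\tau_n$, the zeroth- and second-order terms cancel by symmetry while the first-order terms reproduce $u_t(\cdot,t_{n+1/2})$, so only a third-order remainder in $u_{ttt}$ survives. The same norm-bound-and-Cauchy--Schwarz step, now applied to the $\tau_n^{-1}$-scaled remainder, yields the estimate in terms of $\int_{t_n}^{t_{n+1}} \|u_{ttt}(\cdot,s)\|_{\ell}^2\,ds$.

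The computation is essentially routine, so I anticipate no genuine obstacle. The only points demanding care are the bookkeeping that guarantees the lower-order terms cancel---this is precisely where the midpoint symmetry enters---and the passage from the pointwise-in-time remainder to the integral form on the right-hand side, where Cauchy--Schwarz in time introduces the extra power of $\tau_n$. The smoothness hypothesis on $u$ ensures that $u_{tt}$ and $u_{ttt}$ are well-defined $H^{\ell}(\Omega)$-valued functions, so that all integral remainders are meaningful.
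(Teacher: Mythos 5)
Your proposal is correct and follows essentially the same route as the paper, whose proof is precisely a direct calculation via Taylor expansion with integral remainder about $t_{n+1/2}$, using the midpoint symmetry and Cauchy--Schwarz in time. Your parenthetical observation that the argument naturally bounds the \emph{square} of the $H^{\ell}$-norm is also apt, since the right-hand sides of \eqref{eq:consistency-eq1}--\eqref{eq:consistency-eq2} as stated are dimensionally consistent only with the squared norm on the left.
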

\begin{proof}
A direct calculation involving the
Taylor 
expansion with integral remainder 
about $t_{n+1/2}$. 
    \end{proof}

    We also need the following Bochner spaces on the time interval $[0,T]$ 
    \begin{align*}
        L^{p} \big(0,T; (H^{\ell}(\Omega))^{d} \big)
        =& \Big\{ v(\cdot,t) \in (H^{\ell}(\Omega))^{d}: \| v \|_{p,\ell} 
        = \Big( \int_{0}^{T} \|v(\cdot,t) \|_{\ell}^{p} dt \Big)^{1/p}  < \infty \Big\},  \\
        L^{\infty} \big(0,T; (H^{\ell}(\Omega))^{d} \big)
        =& \Big\{ v(\cdot,t) \in (H^{\ell}(\Omega))^{d}: \| v \|_{\infty,\ell} 
        = \sup_{0 < t < T} \|v(\cdot,t) \|_{\ell} < \infty \Big\},  \\
        L^{p} \big(0,T; X' \big)
        =& \Big\{ v(\cdot,t) \in X': \| v \|_{p,-1} 
        = \Big( \int_{0}^{T} \|v(\cdot,t) \|_{-1}^{p} dt \Big)^{1/p}  < \infty \Big\},
    \end{align*}
    and the corresponding discrete Bochner spaces
    \begin{align*}
        &\ell^{\infty} \big( \{ t_{n} \}_{n=0}^{N}; (H^{\ell}(\Omega))^{d} \big) \\
        &:= \Big\{ v(\cdot,t_{n}) \in (H^{\ell}(\Omega))^{d}: \| |v| \|_{\infty,\ell} 
        := \max_{0 \leq n \leq N} \| v (\cdot,t_{n}) \|_{\ell} < \infty \Big\}, \\
        &\ell^{\infty,1/2} \big( \{ t_{n} \}_{n=0}^{N}; (H^{\ell}(\Omega))^{d} \big) \\
        &:= \Big\{ v(\cdot,t_{n+1/2}) \in (H^{\ell}(\Omega))^{d}: \| |v| \|_{\infty,\ell,1/2} 
        := \max_{1 \leq n \leq N-1} \| v (\cdot,t_{n+1/2}) \|_{\ell} < \infty \Big\}, \\
        &\ell^{p,1/2} \big( \{ t_{n} \}_{n=0}^{N}; (H^{\ell}(\Omega))^{d} \big) \\
        &:= \Big\{ v(\cdot,t_{n+1/2}) \in (H^{\ell}(\Omega))^{d}: \| |v| \|_{p,\ell,1/2} 
        := \Big( \sum_{n=1}^{N-1} \tau_{n} \| v (\cdot,t_{n+1/2}) \|_{\ell}^{p} \Big)^{1/p} < \infty \Big\}, \\
        &\ell^{p,1/2} \big( \{ t_{n} \}_{n=0}^{N}; X' \big) \\
        &:= \Big\{ v(\cdot,t_{n+1/2}) \in X': \| |v| \|_{p,\ell,-1} 
        := \Big( \sum_{n=1}^{N-1} \tau_{n} \| v (\cdot,t_{n+1/2}) \|_{-1}^{p} \Big)^{1/p} < \infty \Big\}. \\
    \end{align*}
We denote the errors 
in the algorithm 
\eqref{eq:CN-MHD-Weak} at time $t_{n}$ by 
$e_{n}^{\pm} = z_{n}^{\pm,h} - z^{\pm} (t_{n}) $.
\begin{confidential}
{\\ \color{blue} This should be $e_{n}^{\pm,h}$ !!!}
\end{confidential}
    \begin{theorem}
        \label{error_theorem_1}
        We assume that $(z^{\pm}, p)$ in \eqref{eq:MHD-Elsa} satisfies the 
regularity assumptions        
        \begin{align}
            &z^{\pm} \in \ell^{\infty} \big( \{ t_{n} \}_{n\!=\!0}^{N}; (H^{\ell})^{d} \big)
            \cap \ell^{\infty,1/2} \big( \{ t_{n} \}_{n\!=\!0}^{N}; (H^{\ell})^{d} \big)
            \cap \ell^{2,1/2} \big( \{ t_{n} \}_{n\!=\!0}^{N}; (H^{r\!+\!1})^{d} \cap (H^{2})^{d} \big)
            , \notag \\
            &z^{\pm} \in \ell^{4,1/2} \big( \{ t_{n} \}_{n\!=\!0}^{N}; (H^{1})^{d} \big)
            \cap L^{4} \big(0,T; (H^{1})^{d} \big) \notag \\
            &z_t^{\pm} \in L^{2}\big(0,T;(H^{r+1})^{d} \big), \quad 
            z_{tt}^{\pm} \in L^{2}\big(0,T;(H^{r+1})^{d} \cap (H^1)^{d} \big), \notag \\
            &z_{ttt}^{\pm} \in L^{2}\big( 0,T;X' \big), \qquad \qquad 
            p \in \ell^{2,1/2} \big( \{ t_{n} \}_{n\!=\!0}^{N}; H^{s+1} \big).
            \label{eq:regu-L2}
        \end{align} 
        and time step size $\tau_{n}$ satisfies
        \begin{align}
            &\tau_{n} < \frac{1}{16 \big( C_{1}^{\ast}h Z_{n}^{(1)} + 256 C_{2}^{\ast} Z_{n}^{(2)} \big)}, \quad \forall n = 1,2, \cdots, N-1, 
            \label{eq:step-res-Error-L2} 
        \end{align}
where
        \begin{align*}
            &C_{1}^{\ast} = \frac{2(3(d\!-\!1))^{2d/3}C_{P}^{3-d} C_{I} (C_{A}^{1,0})^{2} }{\epsilon \nu^{\ast} d^{7d/6}}\big(1 + \frac{1}{\beta_{\tt{is}}} \big)^{2}, \ \ 
            C_{2}^{\ast} = \frac{(3(d\!-\!1))^{4d/3} C_{P}^{2(3-d)}}{32 (\nu^{\ast})^{3} d^{7d/3}}, 
\notag 
\\
&
Z_{n}^{(1)} 
= 
\mbox{$\frac{1}{2}$} \max \big\{ | z^{+}(t_{n+1}) + z^{+}(t_{n} ) |_{2} ,  | z^{-}(t_{n+1}) + z^{-}(t_{n} )  |_{2} 
 \big\}, 
\\
& 
Z_{n}^{(2)} = \frac{1}{2} \max \big\{  \| \nabla \big( z^{+}(t_{n+1}) + z^{+}(t_{n}) \big) \|^{4} , \| \nabla \big( z^{-}(t_{n+1}) + z^{-}(t_{n}) \big) \|^{4} \big\},  
\notag 
        \end{align*}
$\nu^{\ast} = \nu^{+} - |\nu^{-}| = \min\{ \nu, \nu_{m} \}$, 
and 
$C_{P}$ represents the 
constant in the  Poincar\'e inequality. 
Then the following error estimates hold 
\begin{align}
\|e_{N}^{+} \|^2 + \|e_{N}^{-} \|^2 
& 
\leq 
Ch^{2r} \big( \| |z^{+}| \|_{\infty,r+1}^{2} + \| |z^{-}| \|_{\infty,r+1}^{2} \big) 
\label{eq:error-L2-final} 
\\
& 
\quad
+ \exp \Big( C \sum_{n=1}^{N-1} \big( C_{1}^{\ast}h Z_{n}^{(1)} + C_{2}^{\ast} Z_{n}^{(2)} \big) \tau_{n} \Big) F \big( h^{2r}, h^{2s+2}, \tau_{\rm{max}}^{4}, \| B_{\circ} \|^{2} \big),    
\notag 
\end{align}
and
\begin{align}
&
\nu^{\ast} \sum^{N-1}_{n=0} \!\! \tau_n  
\big( \|\nabla (e^+_{n+1} +  e^+_{n} )/2 \|^2  + \|\nabla ( e^-_{n+1} + e^-_{n} ) /2 \|^2 \big)
\notag
\\
& 
\leq
C \nu^{\ast} h^{2r} \big(\tau_{\rm{max}}^{4} \| z^{\pm} \|_{2,r+1}^{2} 
+  \| |z^{\pm}| \|_{2,r+1,1/2}^{2} \big) 
\notag 
\\
& \quad
+ \exp \Big( C \sum_{n=1}^{N-1} \big( C_{1}^{\ast}h Z_{n}^{(1)} + C_{2}^{\ast} Z_{n}^{(2)} \big) \tau_{n} \Big) F \big( h^{2r}, h^{2s+2}, \tau_{\rm{max}}^{4}, \| B_{\circ} \|^{2} \big),   
\notag
\end{align}
where 
$\tau_{\max} = \max_{0 \leq n \leq N} \{\tau_n\}$ and 
        \begin{align*}
            &F \big( h^{2r}, h^{2s+2}, \tau_{\rm{max}}^{4}, \| B_{\circ} \|^{2} \big) \\
            &= \frac{h^{2r\!+\!2}}{\nu^{\ast}}\! \big( \| z^{+} \|_{2,r\!+\!1}^{2} \!+\! \| z^{-} \|_{2,r\!+\!1}^{2} \big)
            +\! \frac{\big(\!(\nu^{+})^{2} \!\!+\!\! (\nu^{-})^{2} \!\big) h^{2r}\tau_{\rm{max}}^{4}}{ \nu^{\ast}}\! 
            \big( \| z_{tt}^{+} \|_{2,r\!+\!1}^{2} \!\!+\!\! \| z_{tt}^{-} \|_{2,r\!+\!1}^{2} \big) \\
            &+ \frac{\big((\nu^{+})^{2} + (\nu^{-})^{2} \big) h^{2r}}{ \nu^{\ast}}
            \big( \| |z^{+}| \|_{2,r+1,1/2}^{2} + \| |z^{-}| \|_{2,r+1,1/2}^{2} \big) \notag \\
            &+\! \frac{ (\nu^{-})^{2} \tau_{\rm{max}}^{4}}{\nu^{\ast}} 
            \big( \| \nabla z_{tt}^{+} \|_{2,0}^{2} \!+\! \| \nabla z_{tt}^{-} \|_{2,0}^{2} \big)
            \!+\! \frac{h^{2s\!+\!2}}{\nu^{\ast}} \big( \| |p^{+}| \|_{2,s\!+\!1,1\!/\!2}^{2} 
            \!+\! \| |p^{-}| \|_{2,s\!+\!1,1\!/\!2}^{2} \big) \notag \\
            &+ \frac{ \tau_{\rm{max}}^{4}}{\nu^{\ast}} \big( \| \nabla z_{ttt}^{+} \|_{2,0}^{2} + \| \nabla z_{ttt}^{-} \|_{2,0}^{2} \big)
            + \frac{\tau_{\rm{max}}^{4}}{\nu^{\ast}} \| B_{\circ} \|^{2} \big( \| \nabla z_{tt}^{+} \|_{2,0}^{2} + \| \nabla z_{tt}^{-} \|_{2,0}^{2} \big)
            \notag \\
            &+ \frac{h^{2r} \tau_{\rm{max}}^{4}}{\nu^{\ast}} \| B_{\circ} \|^{2}
            \big( \| z_{tt}^{\pm} \|_{2,r+1}^{2} + \| z_{tt}^{\pm} \|_{2,r+1}^{2} \big) \notag \\
            &+ \frac{h^{2r} }{\nu^{\ast}} \| B_{\circ} \|^{2} \big( \| |z^{+}| \|_{2,r+1,1/2}^{2} + \| |z^{-}| \|_{2,r+1,1/2}^{2} \big) 
            \notag \\
            &+ \frac{ \tau_{\rm{max}}^{4} }{\nu^{\ast}} \big( \| |z^{-}| \|_{\infty,1}^{2} \| \nabla z_{tt}^{+} \|_{2,0}^{2} + \| |z^{+}| \|_{\infty,1}^{2} \| \nabla z_{tt}^{-} \|_{2,0}^{2} 
            + \| |z^{+}| \|_{\infty,1,1/2}^{2} \| \nabla z_{tt}^{-} \|_{2,0}^{2} \notag \\
            &\qquad \qquad \qquad \qquad + \| |z^{-}| \|_{\infty,1,1/2}^{2} \| \nabla z_{tt}^{+} \|_{2,0}^{2} \big) 
            \notag \\
            &+ \frac{ h^{2r}\tau_{\rm{max}}^{4}}{\nu^{\ast}} \big( \| | \nabla z^{-} | \|_{\infty,0}^{2} \| z^{+}_{tt} \|_{2,r+1}^{2} + \| | \nabla z^{+} | \|_{\infty,0}^{2} \| z^{-}_{tt} \|_{2,r+1}^{2} \big)  \notag \\
            & + \frac{ h^{2r}}{\nu^{\ast}} \big( \| |z^{+}| \|_{2,r+1,1/2}^{2} +  \| |z^{-}| \|_{2,r+1,1/2}^{2} \big)
            + \big( \| \xi_{0}^{+,h} \|^{2} +  \| \xi_{0}^{-,h} \|^{2} \big).
        \end{align*}
    \end{theorem}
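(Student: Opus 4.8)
The plan is to perform the classical finite element error analysis for the midpoint scheme: derive an error equation, split the error with the Stokes projection, bound the linear, consistency, and nonlinear contributions, and close with a discrete Gronwall inequality. \textbf{(Setup.)} First I would write the weak form of \eqref{eq:MHD-Elsa} at $t_{n+1/2}$ and subtract the scheme \eqref{eq:CN-MHD-Weak} to obtain an error identity for $e_n^\pm = z_n^{\pm,h} - z^\pm(t_n)$, valid for all $v^h \in V^h$, in which the temporal difference quotient and the midpoint averaging generate consistency residuals. Using the Stokes projection \eqref{eq:Stokes-def} I split $e_n^\pm = \phi_n^\pm + \eta_n^\pm$ with $\eta_n^\pm = I_{\rm{St}} z^\pm(t_n) - z^\pm(t_n)$ the projection error, controlled by \eqref{eq:Stoke-Approx} and \eqref{eq:approx-thm} (the source of the $h^{2r}$ and $h^{2s+2}$ powers in $F$), and $\phi_n^\pm \in V^h$ the discrete error. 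Testing with $v^h = \phi_{n+1/2}^\pm := (\phi_{n+1}^\pm + \phi_n^\pm)/2 \in V^h$ annihilates the discrete pressure, while the exact pressure is removed up to its best $Q^h$-approximation, contributing the $\||p^\pm|\|_{2,s+1,1/2}$ terms of $F$.

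\textbf{(Linear and consistency terms.)} The difference-quotient term telescopes into $\tfrac{1}{2\tau_n}(\|\phi_{n+1}^\pm\|^2 - \|\phi_n^\pm\|^2)$. The viscous terms, together with the mixed $\nu^-$ coupling, are reorganised by the polarization identity exactly as in Theorem \ref{themreo_1}, yielding the coercive combination $\nu^\ast(\|\nabla\phi_{n+1/2}^+\|^2 + \|\nabla\phi_{n+1/2}^-\|^2)$ plus a nonnegative sign-term that reproduces the dissipation on the left of the claimed estimate. The $B_\circ$-advection residual and the averaging and time-derivative residuals are estimated by the dual norm \eqref{eq:dual-norm}, Lemma \ref{lemma_taylor}, and Young's inequality, which supply the $\tau_{\max}^4$ and $\|B_\circ\|^2$ contributions in $F$.

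\textbf{(Nonlinear terms --- main obstacle.)} The crux is the cross-coupling difference $\mathcal{N}(z_{n+1/2}^{\mp,h}, z_{n+1/2}^{\pm,h}, \phi_{n+1/2}^\pm) - \mathcal{N}(z^\mp(t_{n+1/2}), z^\pm(t_{n+1/2}), \phi_{n+1/2}^\pm)$. I would expand each factor into its exact, projection-error, and discrete-error components and use the skew-symmetry \eqref{eq:N-form} to discard every contribution of the form $\mathcal{N}(\,\cdot\,, \phi_{n+1/2}^\pm, \phi_{n+1/2}^\pm)$. The surviving trilinear terms split into two groups. In the group where the exact solution carries two derivatives, an integration by parts followed by the inverse inequality \eqref{eq:inv-inequal} on the discrete factor and the Stokes bound \eqref{eq:Stoke-Approx}, with a Young parameter $\epsilon$ absorbing the gradient into the viscous term, yields a multiple of $h\,Z_n^{(1)}(\|\phi_{n+1/2}^+\|^2 + \|\phi_{n+1/2}^-\|^2)$ and fixes $C_1^\ast$. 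The Navier--Stokes-type group is bounded by the $L^4$ Ladyzhenskaya inequality \eqref{eq:Ladyzh} and Young's inequality to produce $C_2^\ast Z_n^{(2)}(\|\phi_{n+1/2}^+\|^2 + \|\phi_{n+1/2}^-\|^2)$. Tracking the explicit Ladyzhenskaya, Poincar\'e, inverse, and approximation constants determines the exact forms of $C_1^\ast$ and $C_2^\ast$; the delicate balancing of the three slots and of the $\pm$ coupling is where the bulk of the work lies.

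\textbf{(Closing the estimate.)} Collecting terms, multiplying by $\tau_n$, and summing over $n$, the step restriction \eqref{eq:step-res-Error-L2} ensures $16\big(C_1^\ast h Z_n^{(1)} + 256\, C_2^\ast Z_n^{(2)}\big)\tau_n < 1$, so the $\|\phi_{n+1}^\pm\|^2$ terms appearing on the right can be absorbed and the discrete Gronwall inequality applies, producing the amplification factor $\exp\big(C\sum_n (C_1^\ast h Z_n^{(1)} + C_2^\ast Z_n^{(2)})\tau_n\big)$ multiplying $F$. Bounding the telescoped projection-error data through \eqref{eq:approx-thm} and the regularity norms \eqref{eq:regu-L2}, and including the initial discrete error $\|\xi_0^{\pm,h}\|^2$, assembles $F$. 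A final triangle inequality $e_n^\pm = \phi_n^\pm + \eta_n^\pm$ together with \eqref{eq:Stoke-Approx} converts the discrete bounds into the stated estimates for $\|e_N^\pm\|^2$ and for the weighted gradient sum.
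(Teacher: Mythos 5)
Your overall architecture coincides with the paper's: the Stokes-projection splitting $e_n^\pm = \xi_n^{\pm,h} + \eta^\pm(t_n)$, testing with the midpoint average of the discrete error in $V^h$, the polarization identity from Theorem \ref{themreo_1} to extract the $\nu^\ast$-coercive combination, Lemma \ref{lemma_taylor} for the consistency residuals, the $L^2$-projected pressure, and a discrete Gr\"onwall step under precisely the absorption condition $16\big(C_1^\ast h Z_n^{(1)} + 256\, C_2^\ast Z_n^{(2)}\big)\tau_n < 1$. The genuine gap is at the single most delicate point, your ``Navier--Stokes-type group,'' i.e.\ the terms $\mathcal{N}\big(\xi_{n+1/2}^{\mp,h}, z_{n+1/2}^{\pm}, \xi_{n+1/2}^{\pm,h}\big)$: you propose to bound these with the $L^4$ Ladyzhenskaya inequality \eqref{eq:Ladyzh}, but that route does not produce the claimed $C_2^\ast Z_n^{(2)}\big(\|\xi^{+,h}\|^2+\|\xi^{-,h}\|^2\big)$ structure. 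By the skew-symmetric form \eqref{eq:N-form} this trilinear term has two halves. The half $\tfrac12\big(\xi^{\mp,h}\cdot\nabla z_{n+1/2}^{\pm}, \xi^{\pm,h}\big)$ does yield, after $L^4$--$L^2$--$L^4$ H\"older and Young, a remainder $\sim \|\nabla z_{n+1/2}^{\pm}\|^{4/(4-d)}\|\xi\|^2$, so the advertised fourth power appears only when $d=3$. Worse, the other half $\tfrac12\big(\xi^{\mp,h}\cdot\nabla \xi^{\pm,h}, z_{n+1/2}^{\pm}\big)$ carries a full power of $\|\nabla\xi^{\pm,h}\|$: absorbing the gradient factors $\|\nabla\xi^{\mp,h}\|^{d/4}\|\nabla\xi^{\pm,h}\|$ by Young (exponents $8/d$, $2$, $8/(4-d)$) leaves $\|z_{n+1/2}^{\pm}\|_{L^4}^{8/(4-d)}\,\|\xi^{\mp,h}\|^2$, which for $d=3$ is an \emph{eighth} power of $\|z\|_{L^4}$, not $\|\nabla z\|^4$. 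Consequently neither $Z_n^{(2)}$, nor $C_2^\ast$ (whose factors $C_P^{2(3-d)}$, $d^{-7d/3}$, $(3(d-1))^{4d/3}$ are fingerprints of different inequalities), nor the step restriction \eqref{eq:step-res-Error-L2} would emerge from your estimate as claimed.

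The paper closes this step by splitting each such trilinear term as $L^3\times L^2\times L^6$: the discrete-error slot is interpolated by the $L^3$ inequality \eqref{eq:Ladyzh-L3} and equalized with Poincar\'e to $(\|\xi\|\,\|\nabla\xi\|)^{1/2}$, the remaining slot is placed in $L^6$ and bounded by its gradient via the Sobolev embedding and Poincar\'e, and two successive Young steps (at cost $(\epsilon\nu^\ast)^{-3}$) give exactly $\|\nabla z_{n+1/2}^{\pm}\|^4\|\xi^{\mp,h}\|^2$ uniformly in $d=2,3$, with the stated constants; the same split handles the $h\,Z_n^{(1)}$ term. On that term, note also a small attribution slip in your plan: the $H^2$-seminorm in $Z_n^{(1)}$ does not arise from integrating by parts onto the exact solution, but from the projection error sitting in the middle slot of $\mathcal{N}\big(\xi^{\mp,h}, \eta_{n+1/2}^{\pm}, \xi^{\pm,h}\big)$, where $\|\nabla \eta_{n+1/2}^{\pm}\| \lesssim h\,|z_{n+1/2}^{\pm}|_2$ by \eqref{eq:Stoke-Approx} and \eqref{eq:approx-thm}, combined with the inverse inequality \eqref{eq:inv-inequal} applied to the discrete factor --- that part of your description is right. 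Your argument can be repaired either by adopting the $L^3$--$L^6$ split, or by accepting a theorem with modified constants, a modified $Z_n^{(2)}$ involving $\|z\|_{L^4}$ powers, and hence a different time-step restriction than \eqref{eq:step-res-Error-L2}.
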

\begin{proof}
The true solutions of \eqref{eq:MHD-Elsa} at time $t_{n+1/2}$ satisfies
        \begin{align}
            & \Big( \frac{ \color{red} z^{\pm}(t_{n+1}) - z^{\pm}(t_n) }{\tau_n} , v^h \Big) 
            \mp \mathcal{N} \big( B_{\circ}, z^{\pm}(t_{n+1/2}), v^h \big) 
            + \mathcal{N} \big( z^{\mp}(t_{n+1/2}), z^{\pm}(t_{n+1/2}), v^h \big) 
\label{eq:exact-eq} 
\\
            & + \nu^+ \big( \nabla z^{\pm}(t_{n+1/2}), \nabla v^h \big) 
            + \nu^- \big( \nabla z^{\mp}(t_{n+1/2}), \nabla v^h \big) 
            - \big( p^{\pm}(t_{n+1/2}), \nabla \cdot v^h \big) \notag \\
& 
= \big( f(t_{n+1/2}), v^h \big) 
+ \Big( \frac{ \color{red}z^{\pm}(t_{n+1}) - z^{\pm}(t_n)}{\tau_n} - z_{t}^{\pm}(t_{n+1/2}), v^h \Big), \qquad \forall v^{h} \in V^{h}. \notag 
        \end{align}
We denote by $I_{\rm{St}} z_{n}^{\pm}$ the velocity components of Stokes projection of $(z_{n}^{\pm},0)$ onto $V^h \times Q^h$ and decompose the error $e_{n}^{\pm}$ as
\begin{gather}
e_{n}^\pm 
= \xi^{\pm,h}_{n} + \color{red}\eta^{\pm}(t_{n})\normalcolor, \quad 
\xi^{\pm,h}_{n} := z_{n}^{\pm,h} -  I_{\rm{St}} \color{red} z^{\pm}(t_{n}), \quad
\color{red} \eta^{\pm}(t_{n}) := I_{\rm{St}} z^{\pm}(t_{n}) - z^{\pm}(t_{n}).
            \label{eq:decomp-error}
        \end{gather}
For convenience of the presentation, 
we denote 
\begin{align*}
& 
\xi^{\pm}_{n+1/2} := (\xi^{\pm}_{n+1} + \xi^{\pm}_{n} )/2 \normalcolor
,
\\
& 
\color{red}z_{n+1/2}^{\pm} 
:= (z^{\pm}(t_{n+1}) + z^{\pm}(t_{n}))/2,
\quad
\eta^{\pm}_{n+1/2} := (\eta^{\pm}(t_{n+1}) + \eta^{\pm}(t_{n}) )/2\normalcolor.
\end{align*}
We subtract \eqref{eq:exact-eq} from the first equation of \eqref{eq:CN-MHD-Weak} 
to obtain: 
\begin{confidential}
    \color{darkblue}
        \begin{align*}
            &\Big( \frac{z_{n+1}^{\pm,h} - z_n^{\pm,h}}{\tau_n}, v^h \Big) 
            - \Big( \frac{z_{n+1}^{\pm} - z_n^{\pm}}{\tau_n}, v^h \Big) 
            + \nu^+ \big( \nabla z_{n+1/2}^{\pm,h}, \nabla v^h \big)
            - \nu^+ \big( \nabla z^{\pm}(t_{n+1/2}), \nabla v^h \big)  \\
            & + \nu^- \big( \nabla z_{n+1/2}^{\mp,h}, \nabla v^h \big)
            - \nu^- \big( \nabla z^{\mp}(t_{n+1/2}), \nabla v^h \big) 
            + \big( p^{\pm}(t_{n+1/2}) - q^{h}, \nabla \cdot v^h \big) \\
            & \mp \mathcal{N} \big(B_{\circ}, z_{n+1/2}^{\pm,h}, v^h \big) 
            \pm \mathcal{N} \big( B_{\circ}, z^{\pm}(t_{n+1/2}), v^h \big)
            + \mathcal{N} \big( z_{n+1/2}^{\mp,h}, z_{n+1/2}^{\pm,h}, v^h \big)
            - \mathcal{N} \big( z^{\mp}(t_{n+1/2}), z^{\pm}(t_{n+1/2}), v^h \big) \\
            &= - \Big( \frac{z_{n+1}^{\pm} - z_n^{\pm}}{\tau_n} - z_{t}^{\pm}(t_{n+1/2}), v^h \Big).
        \end{align*}
\begin{align*}
&
\Big( \frac{e_{n+1}^{\pm} - e_n^{\pm}}{\tau_n}, v^h \Big) 
+ \nu^+ \big( \nabla z_{n+1/2}^{\pm,h} - \nabla z_{n+1/2}^{\pm}, \nabla v^h \big)
            + \nu^+ \big( \nabla z_{n+1/2}^{\pm} - \nabla z^{\pm}(t_{n+1/2}), \nabla v^h \big)  
\\
&
+ \nu^- \big( \nabla z_{n+1/2}^{\mp,h} - \nabla z_{n+1/2}^{\mp}, \nabla v^h \big)
            + \nu^- \big( \nabla z_{n+1/2}^{\mp} - \nabla z^{\mp}(t_{n+1/2}), \nabla v^h \big)
            + \big( p^{\pm}(t_{n+1/2}) - q^{h}, \nabla \cdot v^h \big) \\
            & \mp \mathcal{N} \big(B_{\circ}, z_{n+1/2}^{\pm,h} - z_{n+1/2}^{\pm}, v^h \big)
            \mp \mathcal{N} \big( B_{\circ}, z_{n+1/2}^{\pm} - z^{\pm}(t_{n+1/2}), v^h \big) \\
            &= \mathcal{N} \big( z^{\mp}(t_{n+1/2}), z^{\pm}(t_{n+1/2}), v^h \big)
            - \mathcal{N} \big( z_{n+1/2}^{\mp,h}, z_{n+1/2}^{\pm,h}, v^h \big)
            - \Big( \frac{z_{n+1}^{\pm} - z_n^{\pm}}{\tau_n} - z_{t}^{\pm}(t_{n+1/2}), v^h \Big). 
        \end{align*}
    \normalcolor
\end{confidential}
\begin{align}
&
\notag
\Big( \frac{e_{n+1}^{\pm} - e_n^{\pm}}{\tau_n}, v^h \Big) 
+ \nu^+ \big( \nabla  (e_{n+1}^{\pm} +  e_{n}^{\pm} )/2 , \nabla v^h \big) 
+ \nu^- \big( \nabla  ( e_{n+1}^{\mp} + e_{n}^{\mp} ) / 2 , \nabla v^h \big) 
\\
& \qquad
\mp \mathcal{N} \big(B_{\circ},  (e_{n+1}^{\mp}  + e_{n}^{\mp} ) / 2 , v^h \big)
\notag 
\\
&
= 
\nu^+ \big( \nabla z^{\pm}(t_{n+1/2}) - \nabla ((z^{\pm}(t_{n+1}) + z^{\pm}(t_{n}))/2) , \nabla v^h \big)
\notag 
\\
&
\quad
+ \nu^- \big( \nabla z^{\mp}(t_{n+1/2}) - \nabla ((z^{\mp}(t_{n+1}) + z^{\mp}(t_{n}))/2) ,  \nabla v^h \big) 
\notag 
\\
&
\quad
\pm \mathcal{N} \big( B_{\circ}, (z^{\pm}(t_{n+1}) + z^{\pm}(t_{n}))/2 - z^{\pm}(t_{n+1/2}), v^h \big)
            - \big( p^{\pm}(t_{n+1/2}) - q^{h}, \nabla \cdot v^h \big) 
\notag 
\\
& 
\quad
+ \mathcal{N} \big( z^{\mp}(t_{n+1/2}), z^{\pm}(t_{n+1/2}), v^h \big)
            - \mathcal{N} \big( z_{n+1/2}^{\mp,h}, z_{n+1/2}^{\pm,h}, v^h \big) 
\notag 
\\
&
\quad
- \Big( \frac{
	z^{\pm}(t_{n+1}) - z^{\pm}(t_n)}{\tau_n} - z_{t}^{\pm}(t_{n+1/2}), v^h \Big)
,
\quad \forall (v^{h},q^{h}) \in V^{h} \times Q^{h}
.
\notag 
        \end{align}
\begin{confidential}
    \color{darkblue}
    \begin{align*}
        &\Big( \frac{\xi_{n+1}^{\pm,h} - \xi_{n}^{\pm,h}}{\tau_n}, v^h \Big) 
        + \Big( \frac{\eta_{n+1}^{\pm} - \eta_n^{\pm}}{\tau_n}, v^h \Big) 
        + \nu^+ \big( \nabla \xi_{n+1/2}^{\pm,h} + \nabla \eta_{n+1/2}^{\pm}, \nabla v^h \big)
        \\
        &+ \nu^- \big( \nabla \xi_{n+1/2}^{\mp,h} + \nabla \eta_{n+1/2}^{\mp}, \nabla v^h \big)
        \mp \mathcal{N} \big(B_{\circ}, \xi_{n+1/2}^{\pm,h} + \eta_{n+1/2}^{\pm}, v^h \big) \\ 
        &= \nu^+ \big( \nabla z^{\pm}(t_{n+1/2}) - \nabla z_{n+1/2}^{\pm}, \nabla v^h \big)
        + \nu^- \big( \nabla z^{\mp}(t_{n+1/2}) - \nabla z_{n+1/2}^{\mp}, \nabla v^h \big) \notag \\
        &\pm \mathcal{N} \big( B_{\circ}, z_{n+1/2}^{\pm} - z^{\pm}(t_{n+1/2}), v^h \big)
        - \big( p^{\pm}(t_{n+1/2}) - q^{h}, \nabla \cdot v^h \big) \notag \\
        &+ \mathcal{N} \big( z^{\mp}(t_{n+1/2}), z^{\pm}(t_{n+1/2}), v^h \big)
        - \mathcal{N} \big( z_{n+1/2}^{\mp,h}, z_{n+1/2}^{\pm,h}, v^h \big) 
        - \Big( \frac{z_{n+1}^{\pm} - z_n^{\pm}}{\tau_n} - z_{t}^{\pm}(t_{n+1/2}), v^h \Big)
,
\quad \forall (v^{h},q^{h}) \in V^{h} \times Q^{h}
.
    \end{align*}
    \normalcolor
\end{confidential}        
By the decomposition of the error $e_{n}^{\pm}$ 
\eqref{eq:decomp-error}, 
the above 
can be equivalently written as
\begin{align}
\notag
&
\Big( \frac{\xi_{n+1}^{\pm,h} - \xi_{n}^{\pm,h}}{\tau_n}, v^h \Big)
            + \nu^+ \big( \nabla \xi_{n+1/2}^{\pm,h}, \nabla v^h \big)
            + \nu^- \big( \nabla \xi_{n+1/2}^{\mp,h}, \nabla v^h \big) 
            \mp \mathcal{N} \big(B_{\circ}, \xi_{n+1/2}^{\pm,h}, v^h \big)  
\\
& 
=
- \Big( \frac{
	\eta^{\pm}(t_{n+1}) - \eta^{\pm}(t_n)}{\tau_n}, v^h \Big)
- \nu^+ \big( \nabla \color{red}\eta_{n+1/2}^{\pm}\normalcolor, \nabla v^h \big)
            - \nu^- \big( \nabla \color{red}\eta_{n+1/2}^{\mp}\normalcolor, \nabla v^h \big)
            \pm \mathcal{N} \big(B_{\circ}, \color{red}\eta_{n+1/2}^{\pm}\normalcolor, v^h \big) \notag 
\\
& \quad
+ \nu^+ \big( \nabla z^{\pm}(t_{n+1/2}) - \nabla \color{red}z_{n+1/2}^{\pm}\normalcolor, \nabla v^h \big)
            + \nu^- \big( \nabla z^{\mp}(t_{n+1/2}) - \nabla \color{red}z_{n+1/2}^{\mp}\normalcolor, \nabla v^h \big)
            \notag 
\\
& \quad
 - \big( p^{\pm}(t_{n+1/2}) - q^{h}, \nabla \cdot v^h \big)
            - \Big( \frac{\color{red}z^{\pm}(t_{n+1})\normalcolor - \color{red}z^{\pm}(t_n)\normalcolor}{\tau_n} - z_{t}^{\pm}(t_{n+1/2}), v^h \Big)
            \notag 
\\
&\quad
\pm \mathcal{N} \big( B_{\circ}, \color{red}z_{n+1/2}^{\pm}\normalcolor - z^{\pm}(t_{n+1/2}), v^h \big)
            + \mathcal{N} \big( z^{\mp}(t_{n+1/2}), z^{\pm}(t_{n+1/2}), v^h \big) \notag 
\\
&
\quad
- \mathcal{N} \big( z_{n+1/2}^{\mp,h}, z_{n+1/2}^{\pm,h}, v^h \big)
.
\notag
\end{align}
We set $v^{h} = \xi_{n+1/2}^{\pm,h}$ 
and use skew-symmetry of $\mathcal{N}$
to obtain 
\begin{align}
\label{eq:errorL2-eq3}
&
\frac{1}{2\tau_{n}} \big( \| \xi_{n+1}^{\pm,h} \|^{2} - \| \xi_{n}^{\pm,h} \|^{2} \big) + \nu^+ \| \nabla \xi_{n+1/2}^{\pm,h} \|^{2} 
            + \nu^- \big( \nabla \xi_{n+1/2}^{\mp,h}, \nabla \xi_{n+1/2}^{\pm,h} \big) 
\\
& 
=
 - \Big( \frac{\color{red}\eta^{\pm}(t_{n+1})\normalcolor - \color{red}\eta^{\pm}(t_n)\normalcolor}{\tau_n}, \xi_{n+1/2}^{\pm,h} \Big)
- \nu^+ \big( \nabla {\color{red} \eta_{n+1/2}^{\pm} } , \nabla \xi_{n+1/2}^{\pm,h} \big)
            - \nu^- \big( \nabla {\color{red} \eta_{n+1/2}^{\mp} }, \nabla \xi_{n+1/2}^{\pm,h} \big) \notag 
\\
&
\quad
 + \nu^+ \big( \nabla z^{\pm}(t_{n+1/2}) - \nabla \color{red}z_{n+1/2}^{\pm}\normalcolor, \nabla \xi_{n+1/2}^{\pm,h} \big)
            + \nu^- \big( \nabla z^{\mp}(t_{n+1/2}) - \nabla \color{red}z_{n+1/2}^{\mp}\normalcolor, \nabla \xi_{n+1/2}^{\pm,h} \big) 
\notag 
\\
&
\quad
- \big( p^{\pm}(t_{n+1/2}) - q^{h}, \nabla \cdot \xi_{n+1/2}^{\pm,h} \big)
            - \Big( \frac{\color{red}z^{\pm}(t_{n+1})\normalcolor - \color{red}z^{\pm}(t_n)\normalcolor}{\tau_n} - z_{t}^{\pm}(t_{n+1/2}), \xi_{n+1/2}^{\pm,h} \Big) 
\notag \\
&
\quad
\pm \mathcal{N} \big(B_{\circ}, {\color{red} \eta_{n+1/2}^{\pm} } , \xi_{n+1/2}^{\pm,h} \big) 
            \pm \mathcal{N} \big( B_{\circ}, \color{red}z_{n+1/2}^{\pm}\normalcolor - z^{\pm}(t_{n+1/2}), \xi_{n+1/2}^{\pm,h} \big) 
\notag 
\\
&
\quad
+ \mathcal{N} \big( z^{\mp}(t_{n+1/2}), z^{\pm}(t_{n+1/2}), \xi_{n+1/2}^{\pm,h} \big) 
            - \mathcal{N} \big(\color{red} z_{n+1/2}^{\mp}\normalcolor, \color{red}z_{n+1/2}^{\pm}\normalcolor, \xi_{n+1/2}^{\pm,h} \big) 
\notag 
\\
&
\quad
+ \mathcal{N} \big( \color{red}z_{n+1/2}^{\mp}, z_{n+1/2}^{\pm}\normalcolor, \xi_{n+1/2}^{\pm,h} \big)
            - \mathcal{N} \big( z_{n+1/2}^{\mp,h}, z_{n+1/2}^{\pm,h}, \xi_{n+1/2}^{\pm,h} \big)
= I_1 +  \cdots + I_{11}. 
\notag 
\end{align}
Now we address each of the terms on the right hand side of \eqref{eq:errorL2-eq3}.
By the Cauchy-Schwarz
, Poincar\'e 
and Young's 
inequalities, 
the estimate of the Stokes projection 
\eqref{eq:Stoke-Approx}, 
the approximations in \eqref{eq:approx-thm}
        and H\"older's inequality 
we have
\begin{confidential}
    \color{darkblue}
        \begin{align*}
        - \Big( \frac{\eta_{n+1}^{\pm} - \eta_n^{\pm}}{\tau_n}, \xi_{n+1/2}^{\pm,h} \Big) 
        \leq& \frac{1}{\tau_n} \| \eta_{n+1}^{\pm} - \eta_n^{\pm} \| \| \xi_{n+1/2}^{\pm,h} \| 
        \leq \frac{C}{\tau_n \sqrt{\epsilon \nu^{\ast}}} \| \eta_{n+1}^{\pm} - \eta_n^{\pm} \| \sqrt{\epsilon \nu^{\ast}}\| \nabla \xi_{n+1/2}^{\pm,h} \| 
        \end{align*}
    \normalcolor
\end{confidential}
\begin{align}
\label{eq:errorL2-term1} 
I_1 
& 
= 
- \Big( \frac{\color{red}\eta^{\pm}(t_{n+1})\normalcolor - \color{red}\eta^{\pm}(t_n) \normalcolor}{\tau_n}, \xi_{n+1/2}^{\pm,h} \Big) 
\\
& \leq 
\frac{C (\epsilon)}{\tau_n^{2} \nu^{\ast}} \| \color{red} \eta^{\pm}(t_{n+1}) - \eta^{\pm}(t_n) \normalcolor\|^{2} + \epsilon \nu^{\ast} \| \nabla \xi_{n+1/2}^{\pm,h} \|^{2}  
\notag
\\
& 
\leq \frac{C (\epsilon) h^{2r+2}}{\tau_n^{2} \nu^{\ast}} \| \color{red}z^{\pm}(t_{n+1}) - z^{\pm}(t_n)\normalcolor \|_{r+1}^{2} + \epsilon \nu^{\ast} \| \nabla \xi_{n+1/2}^{\pm,h} \|^{2}, \notag 
\\
&
\leq \frac{C (\epsilon) h^{2r+2}}{\tau_n \nu^{\ast}} 
            \int_{t_{n}}^{t_{n+1}} \| z^{\pm} \|_{r+1}^{2} dt 
            + \epsilon \nu^{\ast} \| \nabla \xi_{n+1/2}^{\pm,h} \|^{2}, \notag 
\end{align}
where $\epsilon > 0$ is 
a constant to be 
chosen later, 
and $C(\epsilon) >0$ is a constant depending on $\epsilon$.
Using the Cauchy-Schwarz
, Young's inequality, 
the estimate 
\eqref{eq:Stoke-Approx}, the approximations 
\eqref{eq:approx-thm} and \eqref{eq:consistency-eq1} 
we also get
\begin{align}
I_2 
&
=
- \nu^+ \big( \nabla \color{red}\eta_{n+1/2}^{\pm}\normalcolor, \nabla \xi_{n+1/2}^{\pm,h} \big) 
            \label{eq:errorL2-term2} 
\leq  
C \nu^+ h^{r} \| \color{red}z_{n+1/2}^{\pm} \normalcolor\|_{r+1} \| \nabla \xi_{n+1/2}^{\pm,h} \|
\\
& 
\leq
 \frac{C(\epsilon)(\nu^{+})^{2} h^{2r}}{ \nu^{\ast}}
            \big( \| \color{red}z_{n+1/2}^{\pm}\normalcolor - z(t_{n+1/2}) \|_{r+1}^{2} + \| z(t_{n+1/2}) \|_{r+1}^{2} \big) + \epsilon \nu^{\ast} \| \nabla \xi_{n+1/2}^{\pm,h} \|^{2} 
\notag 
\\
&
\leq
 \frac{C(\epsilon)(\nu^{+})^{2} h^{2r}}{ \nu^{\ast}} \Big( \tau_{\rm{max}}^{3} \int_{t_{n}}^{t_{n+1}} \| z_{tt}^{\pm} \|_{r+1}^{2} dt + \| z^{\pm}(t_{n+1/2}) \|_{r+1}^{2} \Big) + \epsilon \nu^{\ast} \| \nabla \xi_{n+1/2}^{\pm,h} \|^{2}. \notag 
\end{align}
Similarly, 
\begin{align}
I_3 
&
= 
- \nu^- \big( \nabla \color{red}\eta_{n+1/2}^{\mp}\normalcolor, \nabla \xi_{n+1/2}^{\pm,h} \big) 
\label{eq:errorL2-term3} 
\\
&
\leq
 \frac{C(\epsilon)(\nu^{-})^{2} h^{2r}}{ \nu^{\ast}} \Big( \tau_{\rm{max}}^{3} \int_{t_{n}}^{t_{n+1}} \| z_{tt}^{\mp} \|_{r+1}^{2} dt + \| z^{\mp}(t_{n+1/2}) \|_{r+1}^{2} \Big) + \epsilon \nu^{\ast} \| \nabla \xi_{n+1/2}^{\pm,h} \|^{2}. 
 \notag 
\end{align}
By the Cauchy-Schwarz and Young inequalities, using \eqref{eq:consistency-eq1} 
we have
\begin{align}
I_4
&
= 
\nu^+ \big( \nabla z^{\pm}(t_{n+1/2}) - \nabla \color{red}z_{n+1/2}^{\pm}\normalcolor, \nabla \xi_{n+1/2}^{\pm,h} \big) 
\label{eq:errorL2-term4} 
\\
&
\leq
\nu^+ \| \nabla z^{\pm}(t_{n+1/2}) - \nabla \color{red}z_{n+1/2}^{\pm}\normalcolor \|
            \| \nabla \xi_{n+1/2}^{\pm,h} \| 
\notag 
\\
&
\leq
\frac{ C(\epsilon) (\nu^{+})^{2} \tau_{\rm{max}}^{3}}{\nu^{\ast}} 
            \int_{t_{n}}^{t_{n+1}} \| \nabla z_{tt}^{\pm} \|^{2} dt 
            + \epsilon \nu^{\ast} \| \nabla \xi_{n+1/2}^{\pm,h} \|^{2}
,
\notag 
\end{align}
and
\begin{align}
I_5 
&
= 
\nu^- \big( \nabla z^{\mp}(t_{n+1/2}) - \nabla \color{red}z_{n+1/2}^{\mp}\normalcolor, \nabla \xi_{n+1/2}^{\pm,h} \big) 
\label{eq:errorL2-term5}
\\
&
\leq
\nu^- \| \nabla z^{\mp}(t_{n+1/2}) - \nabla \color{red}z_{n+1/2}^{\mp}\normalcolor \| 
            \| \nabla \xi_{n+1/2}^{\pm,h} \| 
\notag
\\
&
\leq
\frac{ C(\epsilon) (\nu^{-})^{2} \tau_{\rm{max}}^{3}}{\nu^{\ast}} 
            \int_{t_{n}}^{t_{n+1}} \| \nabla z_{tt}^{\mp} \|^{2} dt 
            + \epsilon \nu^{\ast} \| \nabla \xi_{n+1/2}^{\pm,h} \|^{2}.
\notag 
\end{align}
Now we set $q^{h}$ to be the $L^{2}$-projection of $p^{\pm}(t_{n+1/2})$, 
and use the Cauchy-Schwarz inequality, 
\eqref{eq:approx-thm} and Young's inequality to obtain
\begin{align}
I_6 
&
= 
- \big( p^{\pm}(t_{n+1/2}) - q^{h}, \nabla \cdot \xi_{n+1/2}^{\pm,h} \big)
\leq \sqrt{d} \| p^{\pm}(t_{n+1/2}) - q^{h} \| \| \nabla \xi_{n+1/2}^{\pm,h} \| 
\label{eq:errorL2-term6} 
\\
& 
\leq
\frac{C(\epsilon) h^{2s+2}}{\nu^{\ast}} \| p^{\pm}(t_{n+1/2}) \|_{s+1}^{2}
            + \epsilon \nu^{\ast} \| \nabla \xi_{n+1/2}^{\pm,h} \|^{2}. \notag 
        \end{align}
Using the  Cauchy-Schwarz 
and Poincar\'e inequalit
ies, \eqref{eq:consistency-eq2} 
and Young's inequality we obtain
\begin{align}
\label{eq:errorL2-term7}  
I_7 
&
=  
- \Big( \frac{\color{red}z^{\pm}(t_{n+1}) \!-\! z^{\pm}(t_n) }{\tau_n} \!-\! z_{t}^{\pm}(t_{n+1/2}), \xi_{n+1/2}^{\pm,h} \Big) 
\\
&
\leq
 \frac{ C(\epsilon) \tau_{\rm{max}}^{3}}{\nu^{\ast}} \int_{t_{n}}^{t_{n+1}} \| \nabla z_{ttt}^{\mp} \|^{2} dt 
            + \epsilon \nu^{\ast} \| \nabla \xi_{n+1/2}^{\pm,h} \|^{2}. \notag 
        \end{align}
By Holder's inequality and the Sobolev embedding theorem, the Poincar\'e inequality, 
using 
$\nabla B_{\circ} = 0$, the approximations 
\eqref{eq:Stoke-Approx}, \eqref{eq:approx-thm} and \eqref{eq:consistency-eq1}, 
we have
\begin{confidential}
    \color{darkblue}
        \begin{align*}
            &\mathcal{N} \big(B_{\circ}, \eta_{n+1/2}^{\pm}, \xi_{n+1/2}^{\pm,h} \big) \\
            =& \big( (B_{\circ} \cdot \nabla ) \eta_{n+1/2}^{\pm}, \xi_{n+1/2}^{\pm,h} \big)
            + \frac{1}{2} \big( (\nabla \cdot B_{\circ}) \eta_{n+1/2}^{\pm}, \xi_{n+1/2}^{\pm,h} \big) \\
            =& \big( (B_{\circ} \cdot \nabla ) \eta_{n+1/2}^{\pm}, \xi_{n+1/2}^{\pm,h} \big) 
            \leq \| B_{\circ} \|_{L^{3}} \| \nabla \eta_{n+1/2}^{\pm} \| \| \xi_{n+1/2}^{\pm,h} \|_{L^{6}} \\
            \leq& C \big( \| B_{\circ} \| \| B_{\circ} \|_{1} \big)^{1/2} \| \nabla \eta_{n+1/2}^{\pm} \| \| \xi_{n+1/2}^{\pm,h} \|_{1} 
            \leq C \| B_{\circ} \| \| \nabla \eta_{n+1/2}^{\pm} \| \| \nabla \xi_{n+1/2}^{\pm,h} \|
        \end{align*}
    \normalcolor
\end{confidential}
\begin{align}
I_8 
= 
&
\pm \mathcal{N} \big(B_{\circ}, \color{red}\eta^{\pm}(t_{n+1/2})\normalcolor, \xi_{n+1/2}^{\pm,h} \big)  
\label{eq:errorL2-term8}   
\\
&
\leq C \big( \| B_{\circ} \| \| B_{\circ} \|_{1} \big)^{1/2} \| \nabla \color{red}\eta_{n+1/2}^{\pm}\normalcolor \| \| \xi_{n+1/2}^{\pm,h} \|_{1} \notag 
\\
&
\leq C \| B_{\circ} \| \| \nabla \color{red}\eta_{n+1/2}^{\pm}\normalcolor \| \| \nabla \xi_{n+1/2}^{\pm,h} \| \notag 
\\
& 
\leq Ch^{r} \| B_{\circ} \| \big( \| \color{red}z_{n+1/2}^{\pm}\normalcolor - z^{\pm}(t_{n+1/2}) \|_{r+1} + \| z^{\pm}(t_{n+1/2}) \|_{r+1} \big) \| \nabla \xi_{n+1/2}^{\pm,h} \| \notag 
\\
&
\leq \frac{C(\epsilon)h^{2r} }{\nu^{\ast}} \| B_{\circ} \|^{2}
            \big( \tau_{\rm{max}}^{3} \int_{t_{n}}^{t_{n+1}} \| z_{tt}^{\pm} \|_{r+1}^{2} dt + \| z^{\pm}(t_{n+1/2}) \|_{r+1}^{2} \big) 
            + \epsilon \nu^{\ast} \| \nabla \xi_{n+1/2}^{\pm,h} \|^{2}
, 
\notag 
\end{align}
and        
\begin{align}
I_9
= 
&
\pm \mathcal{N} \big( B_{\circ}, \color{red}z_{n+1/2}^{\pm}\normalcolor - z^{\pm}(t_{n+1/2}), \xi_{n+1/2}^{\pm,h} \big) 
\label{eq:errorL2-term9} 
\\
&
\leq C \| B_{\circ} \| \| \nabla\color{red} z_{n+1/2}^{\pm}\normalcolor - \nabla z^{\pm}(t_{n+1/2}) \| \| \nabla \xi_{n+1/2}^{\pm,h} \| 
\notag 
\\
 &
 \leq \frac{C(\epsilon) \tau_{\rm{max}}^{3}}{\nu^{\ast}} \| B_{\circ} \|^{2} 
            \int_{t_{n}}^{t_{n+1}} \| \nabla z_{tt}^{\pm} \|^{2} dt
            + \epsilon \nu^{\ast} \| \nabla \xi_{n+1/2}^{\pm,h} \|^{2}. \notag 
\end{align}
Similarly to \eqref{eq:errorL2-term9}
\begin{confidential}
    \color{darkblue}
        \begin{align*}
            &\mathcal{N} \big( z^{\mp}(t_{n+1\!/\!2}), z^{\pm}(t_{n+1\!/\!2}), \xi_{n+1\!/\!2}^{\pm,h} \big) 
            \!-\! \mathcal{N} \big( z_{n+1\!/\!2}^{\mp}, z_{n+1\!/\!2}^{\pm}, \xi_{n+1\!/\!2}^{\pm,h} \big) \\
            =& \mathcal{N} \big( z^{\mp}(t_{n+1\!/\!2}) - z_{n+1\!/\!2}^{\mp}, z^{\pm}(t_{n+1\!/\!2}), \xi_{n+1\!/\!2}^{\pm,h} \big) 
            + \mathcal{N} \big( z_{n+1\!/\!2}^{\mp}, z^{\pm}(t_{n+1\!/\!2}), \xi_{n+1\!/\!2}^{\pm,h} \big) 
            \!-\! \mathcal{N} \big( z_{n+1\!/\!2}^{\mp}, z_{n+1\!/\!2}^{\pm}, \xi_{n+1\!/\!2}^{\pm,h} \big) 
        \end{align*}
    \normalcolor
\end{confidential}
we 
obtain
\begin{align}
I_{10}
&
=
\mathcal{N} \big( z^{\mp}(t_{n+1\!/\!2}), z^{\pm}(t_{n+1\!/\!2}), \xi_{n+1\!/\!2}^{\pm,h} \big) 
\!-\! \mathcal{N} \big( z_{n+1\!/\!2}^{\mp}, z_{n+1\!/\!2}^{\pm}, \xi_{n+1\!/\!2}^{\pm,h} \big) 
            \label{eq:errorL2-term10-11} 
\\
& 
\leq
\mathcal{N} \big( z^{\mp}(t_{n+1\!/\!2}) \!-\! z_{n+1\!/\!2}^{\mp}, z^{\pm}(t_{n+1\!/\!2}), \xi_{n+1\!/\!2}^{\pm,h} \big) 
            + \mathcal{N} \big( \color{red}z_{n+1\!/\!2}^{\mp}\normalcolor , z^{\pm}(t_{n+1\!/\!2}) \!-\! \color{red}z_{n+1\!/\!2}^{\pm}\normalcolor, \xi_{n+1\!/\!2}^{\pm,h} \big) 
\notag 
\\
&
\leq 
C \| \nabla ( z^{\mp}(t_{n+1\!/\!2}) \!-\! \color{red}z_{n+1\!/\!2}^{\mp}\normalcolor ) \| 
            \| \nabla z^{\pm}(t_{n+1\!/\!2}) \| \| \nabla \xi_{n+1\!/\!2}^{\pm,h} \| 
\notag 
\\
& \quad
+ C \| \nabla z_{n+1\!/\!2}^{\mp} \| \| \nabla ( z^{\pm}(t_{n+1\!/\!2}) \!-\! \color{red}z_{n+1\!/\!2}^{\pm} )\normalcolor \| \| \nabla \xi_{n+1\!/\!2}^{\pm,h} \| \notag 
\\
& 
\leq \frac{C(\epsilon) \tau_{\rm{max}}^{3} }{\nu^{\ast}} \Big( \| |z^{\mp}| \|_{\infty,1}^{2} \int_{t_{n}}^{t_{n + 1}}   \| \nabla z_{tt}^{\pm} \|^{2} dt 
 + \| |z^{\pm}| \|_{\infty,1,1/2}^{2} \int_{t_{n}}^{t_{n\!+\!1}}  \| \nabla z_{tt}^{\mp} \|^{2} dt \Big) 
\notag
\\
& 
\qquad
+ \epsilon \nu^{\ast} \| \nabla \xi_{n+1\!/\!2}^{\pm,h} \|^{2}. \notag 
        \end{align}
By the skew-symmetriy property 
\eqref{eq:N-form} we can write  $I_{11}$ as
\begin{confidential}
    \color{darkblue}
        \begin{align*}
            &\mathcal{N} \big( \color{red}z_{n+1/2}^{\mp}\normalcolor, z_{n+1/2}^{\pm}, \xi_{n+1/2}^{\pm,h} \big)
            - \mathcal{N} \big( z_{n+1/2}^{\mp,h}, z_{n+1/2}^{\pm,h}, \xi_{n+1/2}^{\pm,h} \big) \\
            =& \mathcal{N} \big( z_{n+1/2}^{\mp}, z_{n+1/2}^{\pm}, \xi_{n+1/2}^{\pm,h} \big)
            - \mathcal{N} \big( z_{n+1/2}^{\mp,h}, z_{n+1/2}^{\pm}, \xi_{n+1/2}^{\pm,h} \big)
            + \mathcal{N} \big( z_{n+1/2}^{\mp,h}, z_{n+1/2}^{\pm}, \xi_{n+1/2}^{\pm,h} \big)
            \notag \\
            &- \mathcal{N} \big( z_{n+1/2}^{\mp,h}, z_{n+1/2}^{\pm,h}, \xi_{n+1/2}^{\pm,h} \big) \notag \\
            =& \mathcal{N} \big( - e_{n+1/2}^{\mp}, z_{n+1/2}^{\pm}, \xi_{n+1/2}^{\pm,h} \big)
            + \mathcal{N} \big( z_{n+1/2}^{\mp,h}, - e_{n+1/2}^{\pm}, \xi_{n+1/2}^{\pm,h} \big) \notag \\
            =& - \mathcal{N} \big( \xi_{n+1/2}^{\mp,h} + \eta_{n+1/2}^{\mp}, z_{n+1/2}^{\pm}, \xi_{n+1/2}^{\pm,h} \big)
            - \mathcal{N} \big( z_{n+1/2}^{\mp,h}, \xi_{n+1/2}^{\pm,h} + \eta_{n+1/2}^{\pm}, \xi_{n+1/2}^{\pm,h} \big) \notag \\
            =& - \mathcal{N} \big( \xi_{n+1/2}^{\mp,h}, z_{n+1/2}^{\pm}, \xi_{n+1/2}^{\pm,h} \big)
            - \mathcal{N} \big( \eta_{n+1/2}^{\mp}, z_{n+1/2}^{\pm}, \xi_{n+1/2}^{\pm,h} \big)
            - \mathcal{N} \big( z_{n+1/2}^{\mp,h}, \eta_{n+1/2}^{\pm}, \xi_{n+1/2}^{\pm,h} \big)\\
            =& - \mathcal{N} \big( \xi_{n+1/2}^{\mp,h}, z_{n+1/2}^{\pm}, \xi_{n+1/2}^{\pm,h} \big)
            - \mathcal{N} \big( \eta_{n+1/2}^{\mp}, z_{n+1/2}^{\pm}, \xi_{n+1/2}^{\pm,h} \big)
            - \mathcal{N} \big( z_{n+1/2}^{\mp,h} - z_{n+1/2}^{\mp}, \eta_{n+1/2}^{\pm}, \xi_{n+1/2}^{\pm,h} \big) \notag \\
            &- \mathcal{N} \big( z_{n+1/2}^{\mp}, \eta_{n+1/2}^{\pm}, \xi_{n+1/2}^{\pm,h} \big)
            \notag \\
            =& - \mathcal{N} \big( \xi_{n+1/2}^{\mp,h}, z_{n+1/2}^{\pm}, \xi_{n+1/2}^{\pm,h} \big)
            - \mathcal{N} \big( \eta_{n+1/2}^{\mp}, z_{n+1/2}^{\pm}, \xi_{n+1/2}^{\pm,h} \big)
            - \mathcal{N} \big( e_{n+1/2}^{\mp}, \eta_{n+1/2}^{\pm}, \xi_{n+1/2}^{\pm,h} \big) \notag \\
            &- \mathcal{N} \big( z_{n+1/2}^{\mp}, \eta_{n+1/2}^{\pm}, \xi_{n+1/2}^{\pm,h} \big)
            \notag \\
            =& - \mathcal{N} \big( \xi_{n+1/2}^{\mp,h}, z_{n+1/2}^{\pm}, \xi_{n+1/2}^{\pm,h} \big)
            - \mathcal{N} \big( \eta_{n+1/2}^{\mp}, z_{n+1/2}^{\pm}, \xi_{n+1/2}^{\pm,h} \big)
            - \mathcal{N} \big( \xi_{n+1/2}^{\mp,h} + \eta_{n+1/2}^{\mp}, \eta_{n+1/2}^{\pm}, \xi_{n+1/2}^{\pm,h} \big) \notag \\
            &- \mathcal{N} \big( z_{n+1/2}^{\mp}, \eta_{n+1/2}^{\pm}, \xi_{n+1/2}^{\pm,h} \big)
            \notag \\
            =& - \mathcal{N} \big( \xi_{n+1/2}^{\mp,h}, z_{n+1/2}^{\pm}, \xi_{n+1/2}^{\pm,h} \big)
            - \mathcal{N} \big( \eta_{n+1/2}^{\mp}, z_{n+1/2}^{\pm}, \xi_{n+1/2}^{\pm,h} \big)
            - \mathcal{N} \big( \xi_{n+1/2}^{\mp,h}, \eta_{n+1/2}^{\pm}, \xi_{n+1/2}^{\pm,h} \big) \notag \\ 
            & - \mathcal{N} \big( \eta_{n+1/2}^{\mp}, \eta_{n+1/2}^{\pm}, \xi_{n+1/2}^{\pm,h} \big)
            - \mathcal{N} \big( z_{n+1/2}^{\mp}, \eta_{n+1/2}^{\pm}, \xi_{n+1/2}^{\pm,h} \big).
        \end{align*}
    \normalcolor
\end{confidential}
\begin{align*}
I_{11}
& 
=
\mathcal{N} \big( \color{red}z_{n+1\!/\!2}^{\mp}\normalcolor, \color{red}z_{n+1\!/\!2}^{\pm}\normalcolor, \xi_{n+1\!/\!2}^{\pm,h} \big)
            - \mathcal{N} \big( z_{n+1\!/\!2}^{\mp,h}, z_{n+1\!/\!2}^{\pm,h}, \xi_{n+1\!/\!2}^{\pm,h} \big) 
\\
&
= - \mathcal{N} \big( \xi_{n+1\!/\!2}^{\mp,h}, \color{red}z_{n+1\!/\!2}^{\pm}\normalcolor, \xi_{n+1\!/\!2}^{\pm,h} \big)
            - \mathcal{N} \big( \color{red}\eta_{n+1\!/\!2}^{\mp}\normalcolor, \color{red}z_{n+1\!/\!2}^{\pm}\normalcolor, \xi_{n+1\!/\!2}^{\pm,h} \big)
            - \mathcal{N} \big( \xi_{n+1\!/\!2}^{\mp,h}, \color{red}\eta_{n+1\!/\!2}^{\pm}\normalcolor, \xi_{n+1\!/\!2}^{\pm,h} \big) 
\notag 
\\ 
& 
\qquad
- \mathcal{N} \big( \color{red}\eta_{n+1\!/\!2}^{\mp}\normalcolor, \color{red}\eta_{n+1\!/\!2}^{\pm}\normalcolor, \xi_{n+1\!/\!2}^{\pm,h} \big)
            - \mathcal{N} \big(\color{red} z_{n+1\!/\!2}^{\mp}\normalcolor, \color{red}\eta_{n+1\!/\!2}^{\pm}\normalcolor, \xi_{n+1\!/\!2}^{\pm,h} \big)
= I_{11}^{(1)} + \cdots +  I_{11}^{(5)}
.
\notag 
\end{align*}
Now we evaluate each term in the right hand side above as follows.
By H\"older's inequality, \eqref{eq:Ladyzh-L3}, 
        Poincar\'e 
and Young's inequality
we have
\begin{confidential}
    \color{darkblue}
        \begin{align*}
            &-\mathcal{N} \big( \xi_{n+1/2}^{\mp,h}, z_{n+1/2}^{\pm}, \xi_{n+1/2}^{\pm,h} \big)
            \\
            =& - \frac{1}{2} \big( (\xi_{n+1/2}^{\mp,h} \cdot \nabla ) z_{n+1/2}^{\pm}, \xi_{n+1/2}^{\pm,h} \big)
            - \frac{1}{2} \big( (\xi_{n+1/2}^{\mp,h} \cdot \nabla ) \xi_{n+1/2}^{\pm,h}, z_{n+1/2}^{\pm} \big) \\
            \leq & \frac{1}{2} \| \xi_{n+1/2}^{\mp,h} \|_{L^{4}} \| \nabla z_{n+1/2}^{\pm} \| \| \xi_{n+1/2}^{\pm,h} \|_{L^{4}}
            + \frac{1}{2} \| \xi_{n+1/2}^{\mp,h} \|_{L^{4}} \| \nabla \xi_{n+1/2}^{\pm,h} \|
            \| z_{n+1/2}^{\pm} \|_{L^{4}} \\
            \leq & \frac{1}{2} \Big( \Big( \frac{ 2(d-1) }{d^{3/2}} \Big)^{d/4} 
            \| \xi_{n+1/2}^{\mp,h} \|^{1-d/4} \| \nabla \xi_{n+1/2}^{\mp,h} \|^{d/4} \Big) \| \nabla z_{n+1/2}^{\pm} \|
            \Big( \Big( \frac{ 2(d-1) }{d^{3/2}} \Big)^{d/4} 
            \| \xi_{n+1/2}^{\pm,h} \|^{1-d/4} \| \nabla \xi_{n+1/2}^{\pm,h} \|^{d/4} \Big) \\
            &+ \frac{1}{2} \Big( \Big( \frac{ 2(d-1) }{d^{3/2}} \Big)^{d/4} 
            \| \xi_{n+1/2}^{\mp,h} \|^{1-d/4} \| \nabla \xi_{n+1/2}^{\mp,h} \|^{d/4} \Big) \| 
            \| \nabla \xi_{n+1/2}^{\pm,h} \| 
            \Big( \Big( \frac{ 2(d-1) }{d^{3/2}} \Big)^{d/4} 
            \| z_{n+1/2}^{\pm} \|^{1-d/4} \| \nabla z_{n+1/2}^{\pm} \|^{d/4} \Big)
            \\
            \leq& \frac{1}{4} \Big( \frac{ 2(d-1) }{d^{3/2}} \Big)^{d/2} 
            \| \nabla z_{n+1/2}^{\pm} \| \Big[ \| \xi_{n+1/2}^{\mp,h} \|^{2(1-d/4)} 
            (4 \epsilon \nu^{\ast}/d)^{-d/4} (4 \epsilon \nu^{\ast}/d)^{d/4} \| \nabla \xi_{n+1/2}^{\mp,h} \|^{d/2} \\
            &\qquad \qquad \qquad \qquad \qquad \qquad + \| \xi_{n+1/2}^{\pm,h} \|^{2(1-d/4)} 
            (4 \epsilon \nu^{\ast}/d)^{-d/4} (4 \epsilon \nu^{\ast}/d)^{d/4}
            \| \nabla \xi_{n+1/2}^{\pm,h} \|^{d/2} \Big] \\
            &+ \frac{1}{2} \Big( \frac{ 2(d-1) }{d^{3/2}} \Big)^{d/2} 
            (2 \epsilon \nu^{\ast})^{-1/2} (2 \epsilon \nu^{\ast})^{1/2}
            \| \nabla \xi_{n+1/2}^{\pm,h} \| \| \xi_{n+1/2}^{\mp,h} \|^{1-d/4} \| \nabla \xi_{n+1/2}^{\mp,h} \|^{d/4} \| z_{n+1/2}^{\pm} \|^{1-d/4} \| \nabla z_{n+1/2}^{\pm} \|^{d/4} \\
            \leq& \frac{1}{4} \frac{ 2^{d/4} (d-1)^{d/2} }{d^{3d/4}} 
            \frac{d^{d/4}}{2^{d/2} (\epsilon \nu^{\ast})^{d/4}} \| \nabla z_{n+1/2}^{\pm} \|
            \Big[ \| \xi_{n+1/2}^{\mp,h} \|^{2(1-d/4)} 
            (4 \epsilon \nu^{\ast}/d)^{d/4} \| \nabla \xi_{n+1/2}^{\mp,h} \|^{d/2} \\
            &\qquad \qquad \qquad \qquad \qquad \qquad \qquad \qquad \qquad
            + \| \xi_{n+1/2}^{\pm,h} \|^{2(1-d/4)} 
            (4 \epsilon \nu^{\ast}/d)^{d/4} \| \nabla \xi_{n+1/2}^{\pm,h} \|^{d/2} \Big] \\
            &+ \frac{(d-1)^{d/2}}{2^{(3 - d)/2} d^{3d/4} (\epsilon \nu^{\ast})^{1/2} } 
            (2 \epsilon \nu^{\ast})^{1/2}
            \| \nabla \xi_{n+1/2}^{\pm,h} \| \| \xi_{n+1/2}^{\mp,h} \|^{1-d/4} \| \nabla \xi_{n+1/2}^{\mp,h} \|^{d/4} \| z_{n+1/2}^{\pm} \|^{1-d/4} \| \nabla z_{n+1/2}^{\pm} \|^{d/4} \\
            \leq& \frac{(d-1)^{d/2}}{2^{(8+d)/4} d^{d/2} (\epsilon \nu^{\ast})^{d/4}}
            \| \nabla z_{n+1/2}^{\pm} \|
            \Big[ \| \xi_{n+1/2}^{\mp,h} \|^{2(1-d/4)} 
            (4 \epsilon \nu^{\ast}/d)^{d/4} \| \nabla \xi_{n+1/2}^{\mp,h} \|^{d/2} \\
            &\qquad \qquad \qquad \qquad \qquad \qquad \qquad \qquad \qquad
            + \| \xi_{n+1/2}^{\pm,h} \|^{2(1-d/4)} 
            (4 \epsilon \nu^{\ast}/d)^{d/4} \| \nabla \xi_{n+1/2}^{\pm,h} \|^{d/2} \Big] \\
            &+ \frac{(d-1)^{d/2}}{2^{(3 - d)/2} d^{3d/4} (\epsilon \nu^{\ast})^{1/2} } 
            (2 \epsilon \nu^{\ast})^{1/2}
            \| \nabla \xi_{n+1/2}^{\pm,h} \| \| \xi_{n+1/2}^{\mp,h} \|^{1-d/4} \| \nabla \xi_{n+1/2}^{\mp,h} \|^{d/4} \| z_{n+1/2}^{\pm} \|^{1-d/4} \| \nabla z_{n+1/2}^{\pm} \|^{d/4} \\
            \leq& \epsilon \nu^{\ast} \| \nabla \xi_{n+1/2}^{\mp,h} \|^{2}
            + \epsilon \nu^{\ast} \| \nabla \xi_{n+1/2}^{\pm,h} \|^{2}
            + \epsilon \nu^{\ast} \| \nabla \xi_{n+1/2}^{\pm,h} \|^{2} \\
            &+ \frac{4-d}{4} \frac{(d-1)^{2d/(4-d)}}{2^{(8+d)/(4-d)} d^{2d/(4-d)} (\epsilon \nu^{\ast})^{d/(4-d)}} \| \nabla z_{n+1/2}^{\pm} \|^{4/(4-d)} \Big( \| \xi_{n+1/2}^{\mp,h} \|^{2} + \| \xi_{n+1/2}^{\pm,h} \|^{2} \Big) \\
            &+ \frac{1}{2}\frac{(d-1)^{d}}{2^{3 - d} d^{3d/2} (\epsilon \nu^{\ast}) } 
            \| \xi_{n+1/2}^{\mp,h} \|^{2(1-d/4)} (4\epsilon \nu^{\ast}/d)^{-d/4} (4\epsilon \nu^{\ast}/d)^{d/4} \| \nabla \xi_{n+1/2}^{\mp,h} \|^{d/2} \| z_{n+1/2}^{\pm} \|^{2(1-d/4)} \| \nabla z_{n+1/2}^{\pm} \|^{d/2} \\
            \leq& \epsilon \nu^{\ast} \| \nabla \xi_{n+1/2}^{\mp,h} \|^{2} 
            + 2\epsilon \nu^{\ast} \| \nabla \xi_{n+1/2}^{\pm,h} \|^{2}
            + \frac{(4-d)(d-1)^{2d/(4-d)}}{2^{(16-d)/(4-d)} d^{2d/(4-d)} (\epsilon \nu^{\ast})^{d/(4-d)}} \| \nabla z_{n+1/2}^{\pm} \|^{4/(4-d)} \Big( \| \xi_{n+1/2}^{\mp,h} \|^{2} + \| \xi_{n+1/2}^{\pm,h} \|^{2} \Big) \\ 
            &+ \epsilon \nu^{\ast} \| \nabla \xi_{n+1/2}^{\mp,h} \|^{2}
            + \frac{4-d}{4} \Big[\frac{(d-1)^{d}}{2^{4 - d} d^{3d/2} (\epsilon \nu^{\ast}) } 
            \frac{d^{d/4}}{ 2^{d/2} (\epsilon \nu^{\ast})^{d/4}} \Big]^{4/(4-d)}
            \| \xi_{n+1/2}^{\mp,h} \|^{2} \| z_{n+1/2}^{\pm} \|^{2} 
            \| \nabla z_{n+1/2}^{\pm} \|
        \end{align*}
        The above calculation doesn't work! 
        \begin{align*}
            &-\mathcal{N} \big( \xi_{n+1/2}^{\mp,h}, z_{n+1/2}^{\pm}, \xi_{n+1/2}^{\pm,h} \big)
            \\
            =& - \frac{1}{2} \big( (\xi_{n+1/2}^{\mp,h} \cdot \nabla ) z_{n+1/2}^{\pm}, \xi_{n+1/2}^{\pm,h} \big)
            - \frac{1}{2} \big( (\xi_{n+1/2}^{\mp,h} \cdot \nabla ) \xi_{n+1/2}^{\pm,h}, z_{n+1/2}^{\pm} \big) \\
            \leq& \frac{1}{2} \| \xi_{n+1/2}^{\mp,h} \|_{L^{3}} \| \nabla z_{n+1/2}^{\pm} \| \| \xi_{n+1/2}^{\pm,h} \|_{L^{6}} 
            + \frac{1}{2} \| \xi_{n+1/2}^{\mp,h} \|_{L^{3}} \| \nabla \xi_{n+1/2}^{\pm,h} \|
            \| z_{n+1/2}^{\pm} \|_{L^{6}} \\
            \leq& \frac{1}{2} \Big( \big( \frac{ 1}{d^{1/2}} \big)^{d/6} 
            \| \xi_{n+1/2}^{\mp,h} \|^{1-d/6} \| \nabla \xi_{n+1/2}^{\mp,h} \|^{d/6}  \Big) \| \nabla z_{n+1/2}^{\pm} \|
            \Big( \big( \frac{ 3(d-1) }{d^{3/2}} \big)^{d/3}
            \| \xi_{n+1/2}^{\pm,h} \|^{1-d/3} \| \nabla \xi_{n+1/2}^{\pm,h} \|^{d/3} \Big) \\
            &+ \frac{1}{2} \Big( \big( \frac{ 1}{d^{1/2}} \big)^{d/6} 
            \| \xi_{n+1/2}^{\mp,h} \|^{1-d/6} \| \nabla \xi_{n+1/2}^{\mp,h} \|^{d/6}  \Big) 
            \| \nabla \xi_{n+1/2}^{\pm,h} \| 
            \Big( \big( \frac{ 3(d-1) }{d^{3/2}} \big)^{d/3}
            \| z_{n+1/2}^{\pm} \|^{1-d/3} \| \nabla z_{n+1/2}^{\pm} \|^{d/3} \Big) \\
            \leq& \frac{(3(d-1))^{d/3} }{2 d^{7d/12}} C_{P}^{1/2 - d/6} 
            \| \xi_{n+1/2}^{\mp,h} \|^{1/2} \| \nabla \xi_{n+1/2}^{\mp,h} \|^{1/2}
            \| \nabla z_{n+1/2}^{\pm} \| C_{P}^{1-d/3} \| \nabla \xi_{n+1/2}^{\pm,h} \| \\
            &+ \frac{(3(d-1))^{d/3} }{2 d^{7d/12}} C_{P}^{1/2 - d/6} 
            \| \xi_{n+1/2}^{\mp,h} \|^{1/2} \| \nabla \xi_{n+1/2}^{\mp,h} \|^{1/2} 
            \| \nabla \xi_{n+1/2}^{\pm,h} \| C_{P}^{1-d/3} \| \nabla z_{n+1/2}^{\pm} \| \\
            \leq& \frac{(3(d-1))^{d/3} }{d^{7d/12}} C_{P}^{(3-d)/2} 
            \| \xi_{n+1/2}^{\mp,h} \|^{1/2} \| \nabla \xi_{n+1/2}^{\mp,h} \|^{1/2} 
            \| \nabla z_{n+1/2}^{\pm} \| (2 \epsilon \nu^{\ast})^{-1/2}  (2 \epsilon \nu^{\ast})^{1/2} \| \nabla \xi_{n+1/2}^{\pm,h} \| \\
            \leq& \epsilon \nu^{\ast} \| \nabla \xi_{n+1/2}^{\pm,h} \|^{2}
            + \frac{(3(d-1))^{2d/3} }{4 \epsilon \nu^{\ast} d^{7d/6}} C_{P}^{3-d}  
            \| \xi_{n+1/2}^{\mp,h} \| (2 \epsilon \nu^{\ast})^{-1/2}  (2 \epsilon \nu^{\ast})^{1/2} \| \nabla \xi_{n+1/2}^{\mp,h} \| \| \nabla z_{n+1/2}^{\pm} \|^{2} \\
            \leq& \epsilon \nu^{\ast} \| \nabla \xi_{n+1/2}^{\pm,h} \|^{2} 
            + \epsilon \nu^{\ast} \| \nabla \xi_{n+1/2}^{\mp,h} \|^{2}
            + \frac{1}{2} \Big[ \frac{(3(d-1))^{2d/3} C_{P}^{3-d}}{4\sqrt{2} (\epsilon \nu^{\ast})^{3/2} d^{7d/6}} \Big]^{2} \| \xi_{n+1/2}^{\mp,h} \|^{2} 
            \| \nabla z_{n+1/2}^{\pm} \|^{4} \\
            =& \epsilon \nu^{\ast} \| \nabla \xi_{n+1/2}^{\pm,h} \|^{2} 
            + \epsilon \nu^{\ast} \| \nabla \xi_{n+1/2}^{\mp,h} \|^{2}
            + \frac{(3(d-1))^{4d/3} C_{P}^{2(3-d)}}{64 (\epsilon \nu^{\ast})^{3} d^{7d/3}}
            \| \xi_{n+1/2}^{\mp,h} \|^{2} \| \nabla z_{n+1/2}^{\pm} \|^{4},
        \end{align*}
        where $C_{P}$ is the bound coefficient of Poincar\'e inequality.
    \normalcolor
\end{confidential}
\begin{align}
& 
I_{11}^{(1)}
=
- \mathcal{N} \big( \xi_{n+1/2}^{\mp,h}, {\color{red} z_{n+1/2}^{\pm} } , \xi_{n+1/2}^{\pm,h} \big)
            \label{eq:NL-term1}
\\
& 
\leq \epsilon \nu^{\ast} \| \nabla \xi_{n+1/2}^{\pm,h} \|^{2} 
            + \epsilon \nu^{\ast} \| \nabla \xi_{n+1/2}^{\mp,h} \|^{2}
            + \frac{(3(d-1))^{4d/3} C_{P}^{2(3-d)}}{64 (\epsilon \nu^{\ast})^{3} d^{7d/3}}
            \| \nabla {\color{red} z_{n+1/2}^{\pm} } \|^{4} \| \xi_{n+1/2}^{\mp,h} \|^{2} . 
\notag 
        \end{align}
Similarly, 
        \begin{align}
& 
I_{11}^{(2)}
=
 - \mathcal{N} \big( {\color{red}\eta_{n+1/2}^{\mp} } , {\color{red} z_{n+1/2}^{\pm} } , \xi_{n+1/2}^{\pm,h} \big) \!
\label{eq:NL-term2}   
\\
& 
\leq
 \epsilon \nu^{\ast} \| \nabla \xi_{n+1/2}^{\pm,h} \|^{2} 
            \!+\! \frac{C(\epsilon)}{\nu^{\ast}} \Big( \| \nabla {\color{red}  z_{n+1}^{\pm} } \|^{2} +  \| {\color{red}  \nabla z_{n}^{\pm} } \|^{2} \Big)\! \| \nabla {\color{red}  \eta_{n+1/2}^{\mp} } \|^{2}. 
\notag 
        \end{align}
Using  H\"older's inequality, \eqref{eq:Ladyzh-L3}, 
Poincar\'e 
and Young's inequality, 
\eqref{eq:Stoke-Approx}, \eqref{eq:approx-thm} and the inverse inequality 
\eqref{eq:inv-inequal}
we get
\begin{align}
 & 
 I_{11}^{(3)}
 = 
- \mathcal{N} \big( \xi_{n+1/2}^{\mp,h}, {\color{red} \eta_{n+1/2}^{\pm} }, \xi_{n+1/2}^{\pm,h} \big) 
\label{eq:NL-term3} 
\\
& 
\leq
 \frac{(3(d-1))^{d/3} }{d^{7d/12}} C_{P}^{(3-d)/2} \big( \| \xi_{n+1/2}^{\mp,h} \| \| \nabla \xi_{n+1/2}^{\mp,h} \| \big)^{1/2}
            \| \nabla {\color{red} \eta_{n+1/2}^{\pm} } \| \| \nabla \xi_{n+1/2}^{\pm,h} \| 
\notag 
\\
& 
\leq
 \epsilon \nu^{\ast} \| \nabla \xi_{n+1/2}^{\pm,h} \|^{2} 
+ \frac{(3(d-1))^{2d/3}C_{P}^{3-d} C_{I} (C_{A}^{1,0})^{2} }{\epsilon \nu^{\ast} d^{7d/6}} \big(1 + \frac{1}{\beta_{\tt{is}}} \big)^{2} h | {\color{red} z_{n+1/2}^{\pm} } |_{2}
            \| \xi_{n+1/2}^{\mp,h} \|^{2}, 
\notag 
\end{align}
also
\begin{align}
&
 I_{11}^{(4)} 
= 
- \mathcal{N} \big( {\color{red}  \eta_{n+1/2}^{\mp} }, {\color{red}  \eta_{n+1/2}^{\pm} }, \xi_{n+1/2}^{\pm,h} \big) 
\label{eq:NL-term4} 
\leq C \big( \| \nabla {\color{red} \eta_{n+1}^{\mp} } \| + \| \nabla {\color{red}  \eta_{n}^{\mp} } \| \big) \| \nabla {\color{red}  \eta_{n+1/2}^{\pm} } \| \| \nabla  \xi_{n+1/2}^{\pm,h} \| 
\\
& 
\leq \epsilon \nu^{\ast} \| \nabla \xi_{n+1/2}^{\pm,h} \|^{2}
            + \frac{C(\epsilon)}{\nu^{\ast}} \big( \| \nabla {\color{red}  z_{n+1}^{\mp} } \|^{2} 
            + \| \nabla {\color{red}  z_{n}^{\mp} } \|^{2} \big) \| \nabla {\color{red}  \eta_{n+1/2}^{\pm} } \|^{2}, 
\notag 
\end{align}
and finally
\begin{align}
&
I_{11}^{(5)}
=
- \mathcal{N} \big( {\color{red}  z_{n+1/2}^{\mp} } , {\color{red}  \eta_{n+1/2}^{\pm} } , \xi_{n+1/2}^{\pm,h} \big)
\label{eq:NL-term5} 
\leq C \big( \| {\color{red} \nabla z_{n+1}^{\mp} } \| + \| \nabla {\color{red} z_{n}^{\mp} } \| \big)
            \| \nabla {\color{red}  \eta_{n+1/2}^{\pm} } \| \| \nabla \xi_{n+1/2}^{\pm,h} \| 
\\
& 
\leq \epsilon \nu^{\ast} \| \nabla \xi_{n+1/2}^{\pm,h} \|^{2}
            + \frac{C(\epsilon)}{\nu^{\ast}} \big( \| \nabla {\color{red}  z_{n+1}^{\mp} } \|^{2} 
            + \| \nabla {\color{red}  z_{n}^{\mp} } \|^{2} \big) 
            \| \nabla {\color{red}  \eta_{n+1/2}^{\pm} } \|^{2}. \notag  
        \end{align}
Therefore, we combine \eqref{eq:NL-term1}
-\eqref{eq:NL-term5} to obtain 
\begin{align}
I_{11}
& 
=
\mathcal{N} \big( {\color{red} z_{n+1\!/\!2}^{\mp} }, {\color{red}  z_{n+1\!/\!2}^{\pm} }, \xi_{n+1\!/\!2}^{\pm,h} \big)
            - \mathcal{N} \big( {\color{red} z_{n+1\!/\!2}^{\mp,h} }, {\color{red} z_{n+1\!/\!2}^{\pm,h} }, \xi_{n+1\!/\!2}^{\pm,h} \big) 
\label{eq:errorL2-term12-13-eq1}  
\\
&
\leq
\frac{(3(d\!-\!1))^{2d/3}C_{P}^{3-d} C_{I} (C_{A}^{1,0})^{2} }{\epsilon \nu^{\ast} d^{7d/6}} \big(1 + \frac{1}{\beta_{\tt{is}}} \big)^{2} h | {\color{red} z_{n+1/2}^{\pm}} |_{2} \| \xi_{n+1/2}^{\mp,h} \|^{2} 
\notag 
\\
& 
\quad 
+ \frac{(3(d\!-\!1))^{4d/3} C_{P}^{2(3-d)}}{64 (\epsilon \nu^{\ast})^{3} d^{7d/3}} \| \nabla {\color{red} z_{n+1/2}^{\pm} } \|^{4} \| \xi_{n+1/2}^{\mp,h} \|^{2} 
\notag
\\
& 
\quad
+ \frac{C(\epsilon)}{\nu^{\ast}} \Big( \| \nabla {\color{red} z_{n+1}^{\pm} } \|^{2} \!+\! \| \nabla {\color{red} z_{n}^{\pm} } \|^{2} \Big)
             \| \nabla {\color{red} \eta_{n+1/2}^{\mp} } \|^{2}
\notag 
\\
& 
\quad
+ \frac{C(\epsilon)}{\nu^{\ast}} \big( \| \nabla {\color{red} z_{n+1}^{\mp} } \|^{2} 
+ \| \nabla {\color{red} z_{n}^{\mp} } \|^{2} \big) \| \nabla {\color{red} \eta_{n+1/2}^{\pm} } \|^{2}
            + 5 \epsilon \nu^{\ast} \| \nabla \xi_{n+1/2}^{\pm,h} \|^{2}
            + \epsilon \nu^{\ast} \| \nabla \xi_{n+1/2}^{\mp,h} \|^{2}
. 
\notag 
        \end{align}
%
Now we use \eqref{eq:Stoke-Approx}, \eqref{eq:approx-thm}, the triangle inequality and \eqref{eq:consistency-eq1} 
to evaluate 
\begin{align}
& 
\| \nabla {\color{red} \eta_{n+1\!/\!2}^{\pm} } \|^{2}
\leq  
Ch^{2r} \| {\color{red} z_{n+1/2}^{\pm} } \|_{r+1}^{2}  
\notag
\leq 
C h^{2r} \tau_{\rm{max}}^{3}  \int_{t_{n}}^{t_{n+1}} \| z^{\pm}_{tt} \|_{r+1}^{2} dt + Ch^{2r} \| z^{\pm}(t_{n+1/2}) \|_{r+1}^{2}
, 
\notag  
\notag  
\end{align}
%
to get
\begin{align}
& 
I_{11}
= 
\mathcal{N} \big( {\color{red} z_{n+1\!/\!2}^{\mp} }, {\color{red} z_{n+1\!/\!2}^{\pm} }, \xi_{n+1\!/\!2}^{\pm,h} \big)
            - \mathcal{N} \big( {\color{red} z_{n+1\!/\!2}^{\mp,h} } , {\color{red}  z_{n+1\!/\!2}^{\pm,h} } , \xi_{n+1\!/\!2}^{\pm,h} \big) 
\label{eq:errorL2-term12-13}  
\\
&
\leq
\frac{(3(d\!-\!1))^{2d/3}C_{P}^{3-d} C_{I} (C_{A}^{1,0})^{2} }{\epsilon \nu^{\ast} d^{7d/6}} \big(1 + \frac{1}{\beta_{\tt{is}}} \big)^{2} h | z_{n+1/2}^{\pm} |_{2} \| \xi_{n+1/2}^{\mp,h} \|^{2} 
\notag 
\\
& \quad
+ \frac{(3(d\!-\!1))^{4d/3} C_{P}^{2(3-d)}}{64 (\epsilon \nu^{\ast})^{3} d^{7d/3}} \| \nabla {\color{red} z_{n+1/2}^{\pm} } \|^{4} \| \xi_{n+1/2}^{\mp,h} \|^{2}
            + 5 \epsilon \nu^{\ast} \| \nabla \xi_{n+1/2}^{\pm,h} \|^{2}
            + \epsilon \nu^{\ast} \| \nabla \xi_{n+1/2}^{\mp,h} \|^{2} \notag 
\\
&
\quad
+ \frac{C(\epsilon) h^{2r}}{\nu^{\ast}} \| | \nabla z^{\mp} | \|_{\infty,0}^{2} 
            \Big( \tau_{\rm{max}}^{3}  \int_{t_{n}}^{t_{n+1}} \| z^{\pm}_{tt} \|_{r+1}^{2} dt + \| z^{\pm}(t_{n+1/2}) \|_{r+1}^{2} \Big) \notag 
\\
&
\quad
+ \frac{C(\epsilon) h^{2r}}{\nu^{\ast}} \| | \nabla z^{\pm} | \|_{\infty,0}^{2}
            \Big( \tau_{\rm{max}}^{3} \int_{t_{n}}^{t_{n+1}} \| z^{\mp}_{tt} \|_{r+1}^{2} dt + \| z^{\mp}(t_{n+1/2}) \|_{r+1}^{2} \Big). \notag 
        \end{align}
We combine \eqref{eq:errorL2-eq3}-\eqref{eq:errorL2-term10-11} 
and \eqref{eq:errorL2-term12-13} 
to obtain the following bound for \eqref{eq:errorL2-eq3}
\begin{confidential}
		\color{darkblue}
			\begin{align*}
				&\frac{1}{2\tau_{n}} \big( \| \xi_{n+1}^{\pm,h} \|^{2} - \| \xi_{n}^{\pm,h} \|^{2} \big) + \nu^+ \| \nabla \xi_{n+1/2}^{\pm,h} \|^{2} 
				+ \nu^- \big( \nabla \xi_{n+1/2}^{\mp,h}, \nabla \xi_{n+1/2}^{\pm,h} \big)  \\
				\leq& \frac{C (\epsilon) h^{2r+2}}{\tau_n \nu^{\ast}} \int_{t_{n}}^{t_{n+1}} \| z^{\pm} \|_{r+1}^{2} dt 
				+ \epsilon \nu^{\ast} \| \nabla \xi_{n+1/2}^{\pm,h} \|^{2} \\
				&+ \frac{C(\epsilon)(\nu^{+})^{2} h^{2r}}{ \nu^{\ast}} \Big( \tau_{\rm{max}}^{3} \int_{t_{n}}^{t_{n+1}} \| z_{tt}^{\pm} \|_{r+1}^{2} dt + \| z^{\pm}(t_{n+1/2}) \|_{r+1}^{2} \Big) + \epsilon \nu^{\ast} \| \nabla \xi_{n+1/2}^{\pm,h} \|^{2} \\
				&+ \frac{C(\epsilon)(\nu^{-})^{2} h^{2r}}{ \nu^{\ast}} \Big( \tau_{\rm{max}}^{3} \int_{t_{n}}^{t_{n+1}} \| z_{tt}^{\mp} \|_{r+1}^{2} dt + \| z^{\mp}(t_{n+1/2}) \|_{r+1}^{2} \Big) + \epsilon \nu^{\ast} \| \nabla \xi_{n+1/2}^{\pm,h} \|^{2} \\
				&+ \frac{ C(\epsilon) (\nu^{-})^{2} \tau_{\rm{max}}^{3}}{\nu^{\ast}} 
				\int_{t_{n}}^{t_{n+1}} \| \nabla z_{tt}^{\mp} \|^{2} dt + \epsilon \nu^{\ast} \| \nabla \xi_{n+1/2}^{\pm,h} \|^{2} \\
				&+ \frac{ C(\epsilon) (\nu^{-})^{2} \tau_{\rm{max}}^{3}}{\nu^{\ast}} 
				\int_{t_{n}}^{t_{n+1}} \| \nabla z_{tt}^{\mp} \|^{2} dt + \epsilon \nu^{\ast} \| \nabla \xi_{n+1/2}^{\pm,h} \|^{2} \\
				&+ \frac{C(\epsilon) h^{2s+2}}{\nu^{\ast}} \| p^{\pm}(t_{n+1/2}) \|_{s+1}^{2}
				+ \epsilon \nu^{\ast} \| \nabla \xi_{n+1/2}^{\pm,h} \|^{2} \\
				&+ \frac{ C(\epsilon) \tau_{\rm{max}}^{3}}{\nu^{\ast}} \int_{t_{n}}^{t_{n+1}} \| \nabla z_{ttt}^{\mp} \|^{2} dt 
				+ \epsilon \nu^{\ast} \| \nabla \xi_{n+1/2}^{\pm,h} \|^{2} \\
				&+ \frac{C(\epsilon)h^{2r} }{\nu^{\ast}} \| B_{\circ} \|^{2}
				\big( \tau_{\rm{max}}^{3} \int_{t_{n}}^{t_{n+1}} \| z_{tt}^{\pm} \|_{r+1}^{2} dt + \| z^{\pm}(t_{n+1/2}) \|_{r+1}^{2} \big) 
				+ \epsilon \nu^{\ast} \| \nabla \xi_{n+1/2}^{\pm,h} \|^{2} \\
				&+ \frac{C(\epsilon) \tau_{\rm{max}}^{3}}{\nu^{\ast}} \| B_{\circ} \|^{2} 
				\int_{t_{n}}^{t_{n+1}} \| \nabla z_{tt}^{\pm} \|^{2} dt + \epsilon \nu^{\ast} \| \nabla \xi_{n+1/2}^{\pm,h} \|^{2} \\
				&+ \frac{C(\epsilon) \tau_{\rm{max}}^{3} }{\nu^{\ast}} \Big( \| |z^{\mp}| \|_{\infty,1}^{2} \!\!\int_{t_{n}}^{t_{n\!+\!1}} \!\! \| \nabla z_{tt}^{\pm} \|^{2} dt + \| |z^{\pm}| \|_{\infty,1,1/2}^{2} \!\!\int_{t_{n}}^{t_{n\!+\!1}} \!\! \| \nabla z_{tt}^{\mp} \|^{2} dt \Big) \!+\! \epsilon \nu^{\ast} \| \nabla \xi_{n+1\!/\!2}^{\pm,h} \|^{2} \\
				&+ \frac{(3(d\!-\!1))^{2d/3}C_{P}^{3-d} C_{I} (C_{A}^{1,0})^{2} }{\epsilon \nu^{\ast} d^{7d/6}} \big(1 + \frac{1}{\beta_{\tt{is}}} \big)^{2} h | z_{n+1/2}^{\pm} |_{2} \| \xi_{n+1/2}^{\mp,h} \|^{2} 
				\notag \\
				&+ \frac{(3(d\!-\!1))^{4d/3} C_{P}^{2(3-d)}}{64 (\epsilon \nu^{\ast})^{3} d^{7d/3}} \| \nabla z_{n+1/2}^{\pm} \|^{4} \| \xi_{n+1/2}^{\mp,h} \|^{2}
				+ 5 \epsilon \nu^{\ast} \| \nabla \xi_{n+1/2}^{\pm,h} \|^{2}
				+ \epsilon \nu^{\ast} \| \nabla \xi_{n+1/2}^{\mp,h} \|^{2} \notag \\
				&+ \frac{C(\epsilon) h^{2r}}{\nu^{\ast}} \| | \nabla z^{\mp} | \|_{\infty,0}^{2} 
				\Big( \tau_{\rm{max}}^{3}  \int_{t_{n}}^{t_{n+1}} \| z^{\pm}_{tt} \|_{r+1}^{2} dt + \| z^{\pm}(t_{n+1/2}) \|_{r+1}^{2} \Big) \notag \\
				&+ \frac{C(\epsilon) h^{2r}}{\nu^{\ast}} \| | \nabla z^{\pm} | \|_{\infty,0}^{2}
				\Big( \tau_{\rm{max}}^{3} \int_{t_{n}}^{t_{n+1}} \| z^{\mp}_{tt} \|_{r+1}^{2} dt + \| z^{\mp}(t_{n+1/2}) \|_{r+1}^{2} \Big).
			\end{align*}
\begin{align*}
&
\frac{1}{2\tau_{n}} \big( \| \xi_{n+1}^{\pm,h} \|^{2} - \| \xi_{n}^{\pm,h} \|^{2} \big) + \nu^+ \| \nabla \xi_{n+1/2}^{\pm,h} \|^{2} 
				+ \frac{|\nu^{-}|}{2} \big\|\nabla \xi_{n+1/2}^{\mp,h} + \text{sign}(\nu^{-}) \nabla \xi_{n+1/2}^{\pm,h} \big\|^2 
				- \frac{|\nu^{-}|}{2} \big( \|\nabla \xi_{n+1/2}^{\mp,h} \|^2 + \|\nabla \xi_{n+1/2}^{\pm,h} \|^2 \big) 
\\
& 
\leq \frac{C (\epsilon) h^{2r+2}}{\tau_n \nu^{\ast}} \int_{t_{n}}^{t_{n+1}} \| z^{\pm} \|_{r+1}^{2} dt
                + \frac{C(\epsilon)(\nu^{+})^{2} h^{2r}}{ \nu^{\ast}} \Big( \tau_{\rm{max}}^{3} \int_{t_{n}}^{t_{n+1}} \| z_{tt}^{\pm} \|_{r+1}^{2} dt + \| z^{\pm}(t_{n+1/2}) \|_{r+1}^{2} \Big) \\
                &+ \frac{C(\epsilon)(\nu^{-})^{2} h^{2r}}{ \nu^{\ast}} \Big( \tau_{\rm{max}}^{3} \int_{t_{n}}^{t_{n+1}} \| z_{tt}^{\mp} \|_{r+1}^{2} dt + \| z^{\mp}(t_{n+1/2}) \|_{r+1}^{2} \Big) \\
                &+ \frac{ C(\epsilon) (\nu^{-})^{2} \tau_{\rm{max}}^{3}}{\nu^{\ast}} 
                \Big( \int_{t_{n}}^{t_{n+1}} \| \nabla z_{tt}^{\mp} \|^{2} dt 
                + \int_{t_{n}}^{t_{n+1}} \| \nabla z_{tt}^{\mp} \|^{2} dt \Big)
                + \frac{C(\epsilon) h^{2s+2}}{\nu^{\ast}} \| p^{\pm}(t_{n+1/2}) \|_{s+1}^{2}
                + \frac{ C(\epsilon) \tau_{\rm{max}}^{3}}{\nu^{\ast}} \int_{t_{n}}^{t_{n+1}} \| \nabla z_{ttt}^{\mp} \|^{2} dt \\
                &+ \frac{C(\epsilon)h^{2r} }{\nu^{\ast}} \| B_{\circ} \|^{2}
				\big( \tau_{\rm{max}}^{3} \int_{t_{n}}^{t_{n+1}} \| z_{tt}^{\pm} \|_{r+1}^{2} dt + \| z^{\pm}(t_{n+1/2}) \|_{r+1}^{2} \big)
                + \frac{C(\epsilon) \tau_{\rm{max}}^{3}}{\nu^{\ast}} \| B_{\circ} \|^{2} 
				\int_{t_{n}}^{t_{n+1}} \| \nabla z_{tt}^{\pm} \|^{2} dt \\
                &+ \frac{C(\epsilon) \tau_{\rm{max}}^{3} }{\nu^{\ast}} \Big( \| |z^{\mp}| \|_{\infty,1}^{2} \!\!\int_{t_{n}}^{t_{n\!+\!1}} \!\! \| \nabla z_{tt}^{\pm} \|^{2} dt + \| |z^{\pm}| \|_{\infty,1,1/2}^{2} \!\!\int_{t_{n}}^{t_{n\!+\!1}} \!\! \| \nabla z_{tt}^{\mp} \|^{2} dt \Big) \\
                &+ \frac{C(\epsilon) h^{2r}}{\nu^{\ast}} \| | \nabla z^{\mp} | \|_{\infty,0}^{2} 
				\Big( \tau_{\rm{max}}^{3}  \int_{t_{n}}^{t_{n+1}} \| z^{\pm}_{tt} \|_{r+1}^{2} dt + \| z^{\pm}(t_{n+1/2}) \|_{r+1}^{2} \Big)  \\
				&+ \frac{C(\epsilon) h^{2r}}{\nu^{\ast}} \| | \nabla z^{\pm} | \|_{\infty,0}^{2}
				\Big( \tau_{\rm{max}}^{3} \int_{t_{n}}^{t_{n+1}} \| z^{\mp}_{tt} \|_{r+1}^{2} dt + \| z^{\mp}(t_{n+1/2}) \|_{r+1}^{2} \Big) \\
                &+ \frac{(3(d\!-\!1))^{2d/3}C_{P}^{3-d} C_{I} (C_{A}^{1,0})^{2} }{\epsilon \nu^{\ast} d^{7d/6}} \big(1 + \frac{1}{\beta_{\tt{is}}} \big)^{2} h | z_{n+1/2}^{\pm} |_{2} \| \xi_{n+1/2}^{\mp,h} \|^{2} 
				\notag \\
				&+ \frac{(3(d\!-\!1))^{4d/3} C_{P}^{2(3-d)}}{64 (\epsilon \nu^{\ast})^{3} d^{7d/3}} \| \nabla z_{n+1/2}^{\pm} \|^{4} \| \xi_{n+1/2}^{\mp,h} \|^{2}
				+ 15 \epsilon \nu^{\ast} \| \nabla \xi_{n+1/2}^{\pm,h} \|^{2}
				+ \epsilon \nu^{\ast} \| \nabla \xi_{n+1/2}^{\mp,h} \|^{2} \notag \\ 
			\end{align*}
		\normalcolor
\end{confidential}
\begin{align}
&
\frac{1}{2\tau_{n}} \big( \| \xi_{n+1}^{\pm,h} \|^{2} - \| \xi_{n}^{\pm,h} \|^{2} \big) + \nu^+ \| \nabla \xi_{n+1/2}^{\pm,h} \|^{2}
            - \frac{|\nu^{-}|}{2} \big( \|\nabla \xi_{n+1/2}^{\mp,h} \|^2 + \|\nabla \xi_{n+1/2}^{\pm,h} \|^2 \big) 
\label{eq:errorL2-eq4}
\\
&
\quad
+ \frac{|\nu^{-}|}{2} \big\|\nabla \xi_{n+1/2}^{\mp,h} + \text{sign}(\nu^{-}) \nabla \xi_{n+1/2}^{\pm,h} \big\|^2 
 - 15 \epsilon \nu^{\ast} \| \nabla \xi_{n+1/2}^{\pm,h} \|^{2}
 -  \epsilon \nu^{\ast} \| \nabla \xi_{n+1/2}^{\mp,h} \|^{2}  
\notag 
\\
&
\leq 
\frac{(3(d\!-\!1))^{2d/3}C_{P}^{3-d} C_{I} (C_{A}^{1,0})^{2} }{\epsilon \nu^{\ast} d^{7d/6}} \big(1 + \frac{1}{\beta_{\tt{is}}} \big)^{2} h | {\color{red}  z_{n+1/2}^{\pm} } |_{2} \| \xi_{n+1/2}^{\mp,h} \|^{2} 
\notag 
\\
& \quad
+ \frac{(3(d\!-\!1))^{4d/3} C_{P}^{2(3-d)}}{64 (\epsilon \nu^{\ast})^{3} d^{7d/3}} \| \nabla {\color{red} z_{n+1/2}^{\pm} } \|^{4} \| \xi_{n+1/2}^{\mp,h} \|^{2}
            + \frac{C (\epsilon) h^{2r+2}}{\tau_n \nu^{\ast}} \int_{t_{n}}^{t_{n+1}} \| z^{\pm} \|_{r+1}^{2} dt 
\notag 
\\
& \quad
+ \frac{C(\epsilon)(\nu^{+})^{2} h^{2r}}{ \nu^{\ast}} \Big( \tau_{\rm{max}}^{3} \int_{t_{n}}^{t_{n+1}} \| z_{tt}^{\pm} \|_{r+1}^{2} dt + \| z^{\pm}(t_{n+1/2}) \|_{r+1}^{2} \Big) 
\notag 
\\
& \quad
+ \frac{C(\epsilon)(\nu^{-})^{2} h^{2r}}{ \nu^{\ast}} \Big( \tau_{\rm{max}}^{3} \int_{t_{n}}^{t_{n+1}} \| z_{tt}^{\mp} \|_{r+1}^{2} dt + \| z^{\mp}(t_{n+1/2}) \|_{r+1}^{2} \Big) 
\notag 
\\
& \quad
+ \frac{ C(\epsilon) (\nu^{-})^{2} \tau_{\rm{max}}^{3}}{\nu^{\ast}} 
            \Big( \int_{t_{n}}^{t_{n+1}} \!\| \nabla z_{tt}^{\mp} \|^{2} dt 
            +\! \int_{t_{n}}^{t_{n+1}} \!\| \nabla z_{tt}^{\mp} \|^{2} dt \Big) 
            +\! \frac{C(\epsilon) h^{2s+2}}{\nu^{\ast}} \| p^{\pm}(t_{n+1/2}) \|_{s+1}^{2} 
\notag 
\\
& \quad
+ \frac{ C(\epsilon) \tau_{\rm{max}}^{3}}{\nu^{\ast}} \int_{t_{n}}^{t_{n+1}} \| \nabla z_{ttt}^{\mp} \|^{2} dt
            + \frac{C(\epsilon) \tau_{\rm{max}}^{3}}{\nu^{\ast}} \| B_{\circ} \|^{2} 
			\int_{t_{n}}^{t_{n+1}} \| \nabla z_{tt}^{\pm} \|^{2} dt 
\notag 
\\
& \quad
+ \frac{C(\epsilon)h^{2r} }{\nu^{\ast}} \| B_{\circ} \|^{2}
            \big( \tau_{\rm{max}}^{3} \int_{t_{n}}^{t_{n+1}} \| z_{tt}^{\pm} \|_{r+1}^{2} dt + \| z^{\pm}(t_{n+1/2}) \|_{r+1}^{2} \big) 
\notag 
\\
& \quad
+ \frac{C(\epsilon) \tau_{\rm{max}}^{3} }{\nu^{\ast}} \Big( \| |z^{\mp}| \|_{\infty,1}^{2} \!\!\int_{t_{n}}^{t_{n\!+\!1}} \!\! \| \nabla z_{tt}^{\pm} \|^{2} dt + \| |z^{\pm}| \|_{\infty,1,1/2}^{2} \!\!\int_{t_{n}}^{t_{n\!+\!1}} \!\! \| \nabla z_{tt}^{\mp} \|^{2} dt \Big) 
\notag 
\\
& \quad
+ \frac{C(\epsilon) h^{2r}}{\nu^{\ast}} \| | \nabla z^{\mp} | \|_{\infty,0}^{2} 
			\Big( \tau_{\rm{max}}^{3}  \int_{t_{n}}^{t_{n+1}} \| z^{\pm}_{tt} \|_{r+1}^{2} dt + \| z^{\pm}(t_{n+1/2}) \|_{r+1}^{2} \Big)  
\notag 
\\
& \quad
+ \frac{C(\epsilon) h^{2r}}{\nu^{\ast}} \| | \nabla z^{\pm} | \|_{\infty,0}^{2}
			\Big( \tau_{\rm{max}}^{3} \int_{t_{n}}^{t_{n+1}} \| z^{\mp}_{tt} \|_{r+1}^{2} dt + \| z^{\mp}(t_{n+1/2}) \|_{r+1}^{2} \Big). 
\notag 
\end{align}
Now adding the two 
relations above
, 
and summing 
over $n$ from $0$ to $N-1$ yields
\begin{confidential}
    \color{darkblue}
        \begin{align*}
            &\frac{1}{2\tau_{n}} \big( \| \xi_{n+1}^{+,h} \|^{2} - \| \xi_{n}^{+,h} \|^{2} \big)
            + \frac{1}{2\tau_{n}} \big( \| \xi_{n+1}^{-,h} \|^{2} - \| \xi_{n}^{-,h} \|^{2} \big)
            + (\nu^+ - |\nu^{-}| ) \big( \| \nabla \xi_{n+1/2}^{+,h} \|^{2} + \| \nabla \xi_{n+1/2}^{-,h} \|^{2} \big) \\
            & - 16 \epsilon \nu^{\ast} \big( \| \nabla \xi_{n+1/2}^{+,h} \|^{2} 
            + \| \nabla \xi_{n+1/2}^{-,h} \|^{2} \big) \\
            &\leq \frac{(3(d\!-\!1))^{2d/3}C_{P}^{3-d} C_{I} (C_{A}^{1,0})^{2} }{\epsilon \nu^{\ast} d^{7d/6}} \big(1 + \frac{1}{\beta_{\tt{is}}} \big)^{2} h 
            \big( | z_{n+1/2}^{+} |_{2} \| \xi_{n+1/2}^{-,h} \|^{2} + | z_{n+1/2}^{-} |_{2} \| \xi_{n+1/2}^{+,h} \|^{2} \big)
            \notag \\
            &+ \frac{(3(d\!-\!1))^{4d/3} C_{P}^{2(3-d)}}{64 (\epsilon \nu^{\ast})^{3} d^{7d/3}} 
            \big( \| \nabla z_{n+1/2}^{+} \|^{4} \| \xi_{n+1/2}^{-,h} \|^{2} + \| \nabla z_{n+1/2}^{-} \|^{4} \| \xi_{n+1/2}^{+,h} \|^{2} \big) + \cdots \cdots 
        \end{align*}
        \begin{align*}
            &\big( \| \xi_{n+1}^{+,h} \|^{2} - \| \xi_{n}^{+,h} \|^{2} \big) 
            + \big( \| \xi_{n+1}^{-,h} \|^{2} - \| \xi_{n}^{-,h} \|^{2} \big) 
            + 2 \nu^{\ast} \tau_{n} \big( 1 - 16 \epsilon \big) 
            \big( \| \nabla \xi_{n+1/2}^{+,h} \|^{2} + \| \nabla \xi_{n+1/2}^{-,h} \|^{2} \big) \\
            &\leq \frac{2(3(d\!-\!1))^{2d/3}C_{P}^{3-d} C_{I} (C_{A}^{1,0})^{2} }{\epsilon \nu^{\ast} d^{7d/6}} \big(1 + \frac{1}{\beta_{\tt{is}}} \big)^{2} h \tau_{n}
            \big( | z_{n+1/2}^{+} |_{2} \| \xi_{n+1/2}^{-,h} \|^{2} + | z_{n+1/2}^{-} |_{2} \| \xi_{n+1/2}^{+,h} \|^{2} \big) \\
            &+ \frac{(3(d\!-\!1))^{4d/3} C_{P}^{2(3-d)} \tau_{n}}{32 (\epsilon \nu^{\ast})^{3} d^{7d/3}} 
            \big( \| \nabla z_{n+1/2}^{+} \|^{4} \| \xi_{n+1/2}^{-,h} \|^{2} + \| \nabla z_{n+1/2}^{-} \|^{4} \| \xi_{n+1/2}^{+,h} \|^{2} \big) + \cdots \cdots 
        \end{align*}
        We denote 
        \begin{align*}
            C_{1}^{\ast} = \frac{2(3(d\!-\!1))^{2d/3}C_{P}^{3-d} C_{I} (C_{A}^{1,0})^{2} }{\epsilon \nu^{\ast} d^{7d/6}}\big(1 + \frac{1}{\beta_{\tt{is}}} \big)^{2}, \quad
            C_{2}^{\ast} = \frac{(3(d\!-\!1))^{4d/3} C_{P}^{2(3-d)}}{32 (\nu^{\ast})^{3} d^{7d/3}} 
        \end{align*}
        \begin{align*}
            &\big( \| \xi_{N}^{+,h} \|^{2} +  \| \xi_{N}^{-,h} \|^{2} \big)
            + 2 \nu^{\ast} \big( 1 - 16 \epsilon \big) \sum_{n=1}^{N} \tau_{n} \big( \| \nabla \xi_{n+1/2}^{+,h} \|^{2} + \| \nabla \xi_{n+1/2}^{-,h} \|^{2} \big) \\ 
            &\leq \sum_{n=1}^{N-1} \frac{C_{1}^{\ast}}{\epsilon} h \tau_{n}
            \big( | z_{n+1/2}^{+} |_{2} \| \xi_{n+1/2}^{-,h} \|^{2} + | z_{n+1/2}^{-} |_{2} \| \xi_{n+1/2}^{+,h} \|^{2} \big)  \notag \\
            &+ \sum_{n=1}^{N-1} \frac{C_{2}^{\ast}}{\epsilon^{3}} \tau_{n} \big( \| \nabla z_{n+1/2}^{+} \|^{4} \| \xi_{n+1/2}^{-,h} \|^{2} + \| \nabla z_{n+1/2}^{-} \|^{4} \| \xi_{n+1/2}^{+,h} \|^{2} \big)
            \notag \\
            &+\!\! \frac{C \!(\epsilon) h^{2r\!+\!2}}{\nu^{\ast}}\! \big( \| z^{+} \|_{2,r\!+\!1}^{2} \!+\! \| z^{-} \|_{2,r\!+\!1}^{2} \big) 
            \!+\! \frac{C(\epsilon)\big(\!(\nu^{+})^{2} \!\!+\!\! (\nu^{-})^{2} \!\big) h^{2r}\tau_{\rm{max}}^{4}}{ \nu^{\ast}}\! 
            \big( \| z_{tt}^{+} \|_{2,r\!+\!1}^{2} \!\!+\!\! \| z_{tt}^{-} \|_{2,r\!+\!1}^{2} \big) \notag \\
            &+ \frac{C(\epsilon) \big((\nu^{+})^{2} + (\nu^{-})^{2} \big) h^{2r}}{ \nu^{\ast}}
            \big( \| |z^{+}| \|_{2,r+1,1/2}^{2} + \| |z^{-}| \|_{2,r+1,1/2}^{2} \big) \notag \\
            &+\! \frac{ C(\epsilon) (\nu^{-})^{2} \tau_{\rm{max}}^{4}}{\nu^{\ast}} 
            \big( \| \nabla z_{tt}^{+} \|_{2,0}^{2} \!+\! \| \nabla z_{tt}^{-} \|_{2,0}^{2} \big)
            \!+\! \frac{C(\epsilon) h^{2s\!+\!2}}{\nu^{\ast}} \big( \| |p^{+}| \|_{2,s\!+\!1,1\!/\!2}^{2} 
            \!+\! \| |p^{-}| \|_{2,s\!+\!1,1\!/\!2}^{2} \big) \notag \\
            &+ \frac{ C(\epsilon) \tau_{\rm{max}}^{4}}{\nu^{\ast}} \big( \| \nabla z_{ttt}^{+} \|_{2,0}^{2} + \| \nabla z_{ttt}^{-} \|_{2,0}^{2} \big)
            + \frac{C(\epsilon) \tau_{\rm{max}}^{4}}{\nu^{\ast}} \| B_{\circ} \|^{2} \big( \| \nabla z_{tt}^{+} \|_{2,0}^{2} + \| \nabla z_{tt}^{-} \|_{2,0}^{2} \big)
            \notag \\
            &+ \frac{C(\epsilon)h^{2r} \tau_{\rm{max}}^{4}}{\nu^{\ast}} \| B_{\circ} \|^{2}
            \big( \| z_{tt}^{\pm} \|_{2,r+1}^{2} + \| z_{tt}^{\pm} \|_{2,r+1}^{2} \big) \notag \\
            &+ \frac{C(\epsilon)h^{2r} }{\nu^{\ast}} \| B_{\circ} \|^{2} \big( \| |z^{+}| \|_{2,r+1,1/2}^{2} + \| |z^{-}| \|_{2,r+1,1/2}^{2} \big) 
            \notag \\
            &+ \frac{C(\epsilon) \tau_{\rm{max}}^{4} }{\nu^{\ast}} \big( \| |z^{-}| \|_{\infty,1}^{2} \| \nabla z_{tt}^{+} \|_{2,0}^{2} + \| |z^{+}| \|_{\infty,1}^{2} \| \nabla z_{tt}^{-} \|_{2,0}^{2} 
            + \| |z^{+}| \|_{\infty,1,1/2}^{2} \| \nabla z_{tt}^{-} \|_{2,0}^{2} \notag \\
            &\qquad \qquad \qquad \qquad + \| |z^{-}| \|_{\infty,1,1/2}^{2} \| \nabla z_{tt}^{+} \|_{2,0}^{2} \big) 
            \notag \\
            &+ \frac{C(\epsilon) h^{2r}\tau_{\rm{max}}^{4}}{\nu^{\ast}} \big( \| | \nabla z^{-} | \|_{\infty,0}^{2} \| z^{+}_{tt} \|_{2,r+1}^{2} + \| | \nabla z^{+} | \|_{\infty,0}^{2} \| z^{-}_{tt} \|_{2,r+1}^{2} \big)  \notag \\
            & + \frac{C(\epsilon) h^{2r}}{\nu^{\ast}} \big( \| |z^{+}| \|_{2,r+1,1/2}^{2} +  \| |z^{-}| \|_{2,r+1,1/2}^{2} \big)
            + \big( \| \xi_{0}^{+,h} \|^{2} +  \| \xi_{0}^{-,h} \|^{2} \big) \notag 
        \end{align*}
        Equivalently, 
    \normalcolor
\end{confidential}
\begin{align}
&
\Big[ 1 - \Big( \frac{C_{1}^{\ast}h Z_{N-1}^{1}}{\epsilon} + \frac{C_{2}^{\ast} Z_{N-1}^{2}}{\epsilon^{3}} \Big) \tau_{N-1} \Big] \big( \| \xi_{N}^{+,h} \|^{2} +  \| \xi_{N}^{-,h} \|^{2} \big) 
\label{eq:errorL2-eq5}  
\\
&
\quad
+ 2 \nu^{\ast} \big( 1 - 16 \epsilon \big) \sum_{n=1}^{N-1} \tau_{n} \big( \| \nabla \xi_{n+1/2}^{+,h} \|^{2} + \| \nabla \xi_{n+1/2}^{-,h} \|^{2} \big) \notag \\ 
            &\leq \sum_{n=1}^{N-1}\! \Big( \frac{C_{1}^{\ast}h Z_{n}^{1}}{\epsilon} \!+\! \frac{C_{2}^{\ast} Z_{n}^{2}}{\epsilon^{3}} \Big) \tau_{n} \big( \| \xi_{n}^{+,h} \|^{2} +  \| \xi_{n}^{-,h} \|^{2} \big)
            + C(\epsilon) F \big( h^{2r}, h^{2s+2}, \tau_{\rm{max}}^{4}, \| B_{\circ} \|^{2} \big).\notag 
        \end{align}
\begin{confidential}
    \color{darkblue}
        where 
        \begin{align*}
            Z_{n}^{(1)} = \max\{ | z_{n+1/2}^{+} |_{2}, | z_{n+1/2}^{-} |_{2} \}, \quad
            Z_{n}^{(2)} = \max\{ \| \nabla z_{n+1/2}^{+} \|^{4}, \| \nabla z_{n+1/2}^{-} \|^{4} \}.
        \end{align*}
    \normalcolor
\end{confidential}
Here we impose the positivity conditions
\begin{align*}
            &1 - \Big( \frac{C_{1}^{\ast}h Z_{N-1}^{1}}{\epsilon} + \frac{C_{2}^{\ast} Z_{N-1}^{2}}{\epsilon^{3}} \Big) \tau_{N-1} > 0, \\
            & 1 - 16 \epsilon > 0, 
        \end{align*}
which 
imply 
the time step restrictions 
\eqref{eq:step-res-Error-L2}. 
T
he discrete Gr\"onwall inequality \cite[p. 369]{MR1043610} 
applied to \eqref{eq:errorL2-eq5} gives
\begin{confidential}
    \color{darkblue}
        \begin{align*}
            &\epsilon < \frac{1}{16}, \quad \frac{1}{\epsilon} > 16, \\
            &1 > \Big( \frac{C_{1}^{\ast}h Z_{N-1}^{(1)}}{\epsilon} + \frac{C_{2}^{\ast} Z_{N-1}^{(2)}}{\epsilon^{3}} \Big) \tau_{N-1} > 16 \big( C_{1}^{\ast}h Z_{N-1}^{(1)} + 256 C_{2}^{\ast} Z_{N-1}^{(2)} \big) \tau_{N-1} \\
            & \tau_{N-1} < \frac{1}{16 \big( C_{1}^{\ast}h Z_{N-1}^{(1)} + 256 C_{2}^{\ast} Z_{N-1}^{(2)} \big)}.
        \end{align*}
    \normalcolor
\end{confidential}
        \begin{align}
            &\big( \| \xi_{N}^{+,h} \|^{2} +  \| \xi_{N}^{-,h} \|^{2} \big) 
            + \nu^{\ast} \sum_{n=1}^{N-1} \tau_{n} \big( \| \nabla \xi_{n+1/2}^{+,h} \|^{2} + \| \nabla \xi_{n+1/2}^{-,h} \|^{2} \big)
            \label{eq:xi_estimate} \\
            &\leq C \exp \Big( C \sum_{n=1}^{N-1} \big( C_{1}^{\ast}h Z_{n}^{1} + C_{2}^{\ast} Z_{n}^{2} \big) \tau_{n} \Big) F \big( h^{2r}, h^{2s+2}, \tau_{\rm{max}}^{4}, \| B_{\circ} \|^{2} \big). \notag  
        \end{align}
%
%
We use the triangle inequality, 
\eqref{eq:Stoke-Approx}, \eqref{eq:approx-thm}, \eqref{eq:consistency-eq1} 
and \eqref{eq:xi_estimate} to 
obtain
\begin{align}
            &\| e_{N}^{\pm} \|^{2} \leq 2 \| \xi_{N}^{\pm,h} \|^{2} + 2 \| \eta_{N}^{\pm} \|^{2} 
\leq 2 \| \xi_{N}^{\pm,h} \|^{2} + C h^{2r} \| |z^{\pm}| \|_{\infty,r+1}^{2},
\label{eq:errorL2-xi-eta} 
\end{align}
and
\begin{align}
&
\nu^{\ast} \sum_{n=1}^{N-1} \tau_{n} \| \nabla ( e_{n+1}^{\pm}  +  e_{n}^{\pm}  ) / 2\|^{2} 
\leq 
2 \nu^{\ast} \!\sum_{n=1}^{N-1} \tau_{n} \big( \| \nabla \xi_{n+1/2}^{\pm,h} \|^{2} 
+ \| {\color{red} \nabla \eta_{n+1/2}^{\pm} } \|^{2} \big) 
\notag 
\\
&
\leq 
2 \nu^{\ast} \!\!\sum_{n=1}^{N-1}\!\! \tau_{n} \| \nabla \xi_{n+1/2}^{\pm,h} \|^{2} 
+ C \nu^{\ast} h^{2r} \tau_{n} \!\!\sum_{n=1}^{N-1} \!\!\big(\color{red} \| z_{n+1/2}^{\pm} 
- z^{\pm}(t_{n+1/2}) \normalcolor\|_{r+1}^{2} \!+\! \| z^{\pm}(t_{n+1/2}) \|_{r+1}^{2} \big)
\notag
\\
&
\leq 
2 \nu^{\ast} \!\!\sum_{n=1}^{N-1} \tau_{n} \| \nabla \xi_{n+1/2}^{\pm,h} \|^{2} \!+\! C \nu^{\ast} h^{2r} \big(\tau_{\rm{max}}^{4} \| z^{\pm} \|_{2,r+1}^{2} 
+ \| |z^{\pm}| \|_{2,r+1,1/2}^{2} \big). 
\notag 
\end{align} 
Finally, 
we apply 
\eqref{eq:Stoke-Approx}, \eqref{eq:approx-thm} 
in
\eqref{eq:errorL2-xi-eta} and use \eqref{eq:errorL2-eq5} to 
derive 
\eqref{eq:error-L2-final}. 
\end{proof}

\section{Time Adaptivity} 
To construct the time adaptive mechanism for the fully-coupled implicit monolithic method \eqref{eq:CN-MHD} with the BE partitioned iteration \eqref{eq:BE-iter} (PIM algorithm), we  estimate its local truncation error (LTE) by the fully-explicit two-step AB2-like scheme\footnote{The derivation of scheme is similar to that of two-step Adam-Bashforth (AB2) scheme, hence we call it AB2-like scheme.} and adopt the improved time step controller proposed in \cite{MR1227985} to adjust time steps. 
The AB2-like scheme for the ordinary differential equation $y' = 
g(t,y)$ is
\begin{align}
    y_{n\!+\!1}^{\tt{AB2-like}} \!&=\! y_{n} \!+\! \frac{t_{n\!+\!1} \!-\!t_{n}}{2 (t_{n-1/2} \!-\!t_{n-3/2})} 
    \Big[ (t_{n\!+\!1} \!+\!t_{n} \!-\!2 t_{n-3/2} )  g^{\tt{Mid}} (t_{n-1/2}, y_{n-1/2})   
    \label{eq:AB2-like-CN} \\
    & \qquad \qquad \qquad \qquad \qquad \qquad 
    \!-\! (t_{n\!+\!1} \!+\!t_{n} \!-\! 2 t_{n-1/2} ) g^{\tt{Mid}} (t_{n-3/2}, y_{n-3/2}) \Big], \notag 
\end{align}
where $g^{\tt{Mid}}(t_{n-1/2}, y_{n-1/2}) $ and $g^{\tt{Mid}} (t_{n-3/2}, y_{n-3/2})$ are calculated by the midpoint rule at time 
nodes
$t_{n}$, $t_{n-1}$ and $t_{n-1}$, $t_{n-2}$, respectively, i.e.,
\begin{align*}
    g^{\tt{Mid}}(t_{n-1/2}, y_{n-1/2}) 
    &= \frac{y_{n} - y_{n-1}}{\tau_{n-1}}, \quad
    g^{\tt{Mid}} (t_{n-3/2}, y_{n-3/2}) =
    \frac{y_{n-1} - y_{n-2}}{\tau_{n-2}}.
\end{align*}
From \eqref{eq:AB2-like-CN} 
we see that $y_{n\!+\!1}^{\tt{AB2-like}}$	is 
a
certain interpolant of the previous three midpoint rule solutions $\{ y_{n}, y_{n-1}, y_{n-2} \}$. 
Hence, the AB2-like scheme \eqref{eq:AB2-like-CN} is fully explicit.
We refer to \cite{MR4092601} for the derivation of the explicit AB2-like scheme \eqref{eq:AB2-like-CN} and the following estimator of LTE $\widehat{T}_{n+1}$ of the midpoint rule by AB2-like scheme
	\begin{align*}
		\widehat{T}_{n+1}
		=&\frac{1/24}{\mathcal{R}^{(n)} - 1/24} \big( y_{n+1}^{\tt{Mid}} - y_{n+1}^{\tt{AB2-like}} \big),
		\notag \\
		\mathcal{R}^{(n)} \!=\! & \frac{1}{12} \Big[ 2 \!+\! \frac{3}{\rho_n} \Big( 1 +  \frac{1}{2\rho_{n\!-\!1}} \Big)
		\Big( 1 + \frac{1}{2 \rho_n} \Big) +\! \frac{3}{2 \rho_n} \Big( 1\!+\!\frac{1}{\rho_n} + \frac{1}{2} \frac{1}{\rho_{n\!-\!1}} \frac{1}{\rho_n} \Big)
		\Big], 
	\end{align*}
	and $\rho_n = \tau_{n}/\tau_{n-1}$ denotes the ratio of consecutive time steps.
\begin{confidential}
    \color{darkblue}
    $\mathcal{R}^{(n)}$ for the DLN algorithm is 
    \begin{align*}
        \mathcal{R}^{(n)} \!=\! & \frac{1}{12} \Big[ 2 \!+\! \frac{3}{\tau_n} \Big( 1 + \big(1 \!-\! \beta_2^{(n\!-\!2)} \big) \frac{1}{\tau_{n\!-\!1}} \!+\!\beta_0^{(n\!-\! 2)} \frac{1}{\tau_{n\!-\!2}} \frac{1}{\tau_{n\!-\!1}} \Big)
		\Big( 1 + \big( 1 \!-\!\beta_2^{(n\!-\!1)} \big)\frac{1}{\tau_n}\!+\!\beta_0^{(n\!-\!1)} \frac{1}{\tau_{n\!-\!1}} \frac{1}{\tau_n} \Big) \notag \\
		&\quad \  +\! \frac{3}{\tau_n} \Big( 1\!+\!\frac{1}{\tau_n} + \big(1 \!-\!\beta_2^{(n\!-\!2)} \big) \frac{1}{\tau_{n\!-\!1}} \frac{1}{\tau_n}\!+\!\beta_0^{(n\!-\!2)} \frac{1}{\tau_{n\!-\!2}} \frac{1}{\tau_{n\!-\!1}} \frac{1}{\tau_n} \Big)
		\Big( 1-\beta_2^{(n\!-\!1)} \!+\! \beta_0^{(n\!-\!1)} \frac{1}{\tau_{n\!-\!1}} \Big) \Big], 
    \end{align*}
    \normalcolor
\end{confidential}
The AB2-like solutions for the MHD system in Els\"asser variables \eqref{eq:MHD-Elsa} at time $t_{n+1}$ is
\begin{align}
    z_{n+1}^{\pm,\tt{AB2-like}} \!&=\! z_{n}^{\pm} \!+\! \frac{t_{n\!+\!1} \!-\!t_{n}}{2 (t_{n-1/2} \!-\!t_{n-3/2})} 
    \Big[ (t_{n\!+\!1} \!+\!t_{n} \!-\!2 t_{n-3/2} )  \frac{z_{n}^{\pm} - z_{n-1}^{\pm}}{\tau_{n-1}}   
    \label{eq:AB2-like-MHD} \\
    & \qquad \qquad \qquad \qquad \qquad \qquad 
    \!-\! (t_{n\!+\!1} \!+\!t_{n} \!-\! 2 t_{n-1/2} ) \frac{z_{n-1}^{\pm} - z_{n-2}^{\pm}}{\tau_{n-2}} \Big], \notag
\end{align}
where $\{z_{n}^{\pm}, z_{n-1}^{\pm}, z_{n-2}^{\pm} \}$ are three previous solutions by the PIM algorithm in \eqref{eq:CN-MHD}. 
The corresponding estimator of LTE for the PIM algorithm \eqref{eq:CN-MHD} is 
\begin{gather}
    \widehat{T}_{n+1}
    = \max\bigg\{ \frac{|1/24| \big\| z_{n+1}^{+} - z_{n+1}^{+,\tt{AB2-like}} \big\|}{|\mathcal{R}^{(n)} - 1/24| \big\| z_{n+1}^{+} \big\|}, 
    \frac{|1/24| \big\| z_{n+1}^{-} - z_{n+1}^{-,\tt{AB2-like}} \big\|}{|\mathcal{R}^{(n)} - 1/24| \big\| z_{n+1}^{-} \big\|}
     \bigg\}.
    \label{eq:Estimator-LTE-CN}
\end{gather}
The improved time step controller is 
\begin{gather}
    \tau_{n+1} = \tau_{n} \cdot \min \Big\{ 1.5, \max \Big\{ 0.2, \kappa \big( {\tt{Tol}}/\| \widehat{T}_{n+1} \|  \big)^{\frac{1}{3}} \Big\} \Big\},
    \label{eq:improve-controller}
\end{gather}
where $\kappa \in (0, 1]$ is the safety factor and ${\tt{Tol}}$ is the required tolerance for the LTE.
We note that, for efficiency, 
the step controller in \eqref{eq:improve-controller} increases the step size if the estimator for LTE $\widehat{T}_{n+1}$ is small with respect to ${\tt{Tol}}$. 
Meanwhile, for the robustness of computations, the controller bounds the new time step $\tau_{n+1}$ between $0.2\tau_{n}$ and $1.5\tau_{n}$.
We summarize the time adaptivity mechanism in the following algorithm. 
\LinesNumberedHidden
    \begin{algorithm}[ptbh]
    	\caption{Adaptivity of the PIM algorithm \eqref{eq:CN-MHD} }
    	\label{alg:Adap-DLN}
    	\KwIn{tolerance ${\tt{Tol}}$, previous solutions $z_{n}^{\pm},z_{n-1}^{\pm},z_{n-2}^{\pm}$ by the PIM algorithm in \eqref{eq:CN-MHD},
    		current time step $\tau_{n}$, two previous time step $\tau_{n-1},\tau_{n-2}$, safety factor $\kappa$, \;}
    	compute the current solution $z_{n+1}^{\pm,\tt{PIM}}$ and $p_{n+1}^{\pm}$ 
    	by the refactorization process \;
    	compute the AB2-like solution $z_{n+1}^{\pm,\tt{AB2-like}}$ by \eqref{eq:AB2-like-MHD} \;
    	use $\tau_{n},\tau_{n-1},\tau_{n-2}$ to update $\rho_{n},\rho_{n-1}$ \;
    	compute $\widehat{T}_{n+1}$ by \eqref{eq:Estimator-LTE-CN}      \tcp*{relative estimator}
    	\uIf{$ \widehat{T}_{n+1}  < \rm{Tol}$}
    	{
    		$z_{n+1}^{\pm} \Leftarrow z_{n+1}^{\pm,\tt PIM}$  \tcp*{accept the result}
    		$\tau_{n\!+\!1} \!\Leftarrow \! \tau_{n} \cdot \min \big\{ \!1.5, \max \big\{\!0.2, \kappa \big(\!\frac {\text{Tol}}{ \widehat{T}_{n+1}  }\!\big)^{1/3} \big\} \!\big\}$  
    		\tcp*{adjust step by \eqref{eq:improve-controller}}
    	}\Else
    	{
    		\tt{// adjust current step to recompute}  
    		$\tau_{n} \!\Leftarrow \! \tau_{n} \cdot \min \big\{ 1.5, \max \big\{0.2, \kappa \big(\frac {\text{Tol}}{ \widehat{T}_{n+1}  }\big)^{1/3} \big\} \big\}$ \;
    	}
\end{algorithm}

\section{Numerical Tests}
\label{sec:Num-Tests}
For all numerical tests, we use the Taylor-Hood($\mathbb{P}2-\mathbb{P}1$) finite element space for spatial discretization and software Freefem++ for programming. 
We apply the PIM algorithm \eqref{eq:CN-MHD} to all numerical tests in this section and compare it with another commonly-used algorithm: \\
\textit{The constant time-stepping second order BDF2-AB2 method \cite{MR3831959}} 
\begin{align}
    \begin{cases}
        \displaystyle \frac{3z^{\pm}_{n+1} \!-\! 4z^{\pm}_n + z^{\pm}_{n-1}}{\tau} \mp \big(B_\circ \!\cdot \!\nabla \big) z^{\pm}_{n+1}
        + \big( (2 z^{\mp}_{n} - z^{\mp}_{n-1}) \!\cdot\! \nabla \big) z^{\pm}_{n+1} \\
        \displaystyle \qquad \qquad \qquad - \nu^+ \Delta z^{\pm}_{n+1}
        - \nu^- \Delta \big( 2 z^{\mp}_{n} - z^{\mp}_{n-1} \big) 
        + \nabla p^{\pm}_{n+1} \!=\! 0, \\
        \displaystyle \qquad \qquad \nabla \cdot z^{\pm}_{n+1} = 0, 
    \end{cases}
    \label{eq:BDF2-AB2}
\end{align}
where $\tau > 0$ is the constant time step size.
The stopping criterion for the $k$-th iteration in \eqref{eq:BE-iter} is
\begin{align*}
    \big\|z^{\pm}_{(k)} - z^{\pm}_{(k - 1)}\big\| / \|z^{\pm}_{(k)}\| \leq {\tt{Tol}},
\end{align*}
where ${\tt{Tol}}$ is the required tolerance.

\subsection{Electrically Conducting 2D Travelling Wave}
In this section we verify the rate of convergence of constant time-stepping PIM algorithm on an electrically conducting two-dimensional travelling wave problem \cite{WiLaTr15} on the domain $\Omega = [0.5, 1.5]^2$.
The exact solutions (in Els\"asser variables) are
\begin{equation}
    \begin{split}
        z^+ & = \begin{bmatrix}
        \frac{3}{4} + \frac{1}{4} \cos(2\pi(x - t)) \sin(2\pi(x - t)) e^{- 8\pi^2 \nu t} + \frac{1}{10} (y + 1)^2 e^{\nu_m t} \\
        - \frac{1}{4} \sin(2\pi(x - t)) \cos(2\pi(x - t)) e^{- 8\pi^2 \nu t} + \frac{1}{10} (x + 1)^2 e^{\nu_m t}
        \end{bmatrix}, \\
        z^- & = \begin{bmatrix}
         \frac{3}{4} + \frac{1}{4} \cos(2\pi(x - t)) \sin(2\pi(x - t)) e^{- 8\pi^2 \nu t} - \frac{1}{10} (y + 1)^2 e^{\nu_m t} \\
        - \frac{1}{4} \sin(2\pi(x - t)) \cos(2\pi(x - t)) e^{- 8\pi^2 \nu t} - \frac{1}{10} (x + 1)^2 e^{\nu_m t}
        \end{bmatrix}, \\
        p & = -\frac{1}{64} \big( \cos(4\pi(x - t)) + \cos(4\pi(y - t)) \big) e^{- 16 \pi^2 \nu t}.
    \end{split}
    \label{2d_flow_true}
\end{equation}
We set $\nu = \nu_m = 2.5 \times 10^{-4}$, the time step $\Delta t = h$, and the required tolerance $\tt{Tol}=1.e-6$ for the convergence of the iterations, at each time step. We test over three values of $B_{\circ}$: $(0,0)$, $(1,1)$, $(10,10)$, and simulate the problem over time interval $[0,1]$.
The initial conditions and boundary conditions are given by the exact solutions. 
The results
of the PIM algorithm \eqref{eq:CN-MHD}, and the BDF2-AB2 algorithm \eqref{eq:BDF2-AB2} are presented in 
\Cref{table:2D-wave-B00,table:2D-wave-B11,table:2D-wave-B1010}.
Both algorithms have the second-order convergence rate in $L^{2}$-norm 
for $B_{\circ} = (1,1), (10,10)$. 
Moreover, the PIM algorithm \eqref{eq:CN-MHD} obtains smaller errors for small magnitude 
of $B_{\circ}$ while BDF2-AB2 algorithm \eqref{eq:BDF2-AB2} loses the order of accuracy for the special case of $B_{\circ} = (0,0)$.
        \begin{table}[H]
            \centering 
            \begin{tabular}{c c c c c c }
            \hline\hline 
            \multicolumn{6}{c}{PIM algorithm} \\
            \hline 
            $\Delta t = h$ & $\| z^+ - z^{+,h} \|_{\infty,0}$ & Rate & $\| z^- - z^{-,h} \|_{\infty,0}$ & Rate & Average iteration \\ [0.5ex] 
            \hline 
            $1/16$  & $1.6803\rm{e}-2$  &           & $2.5740\rm{e}-2$  &           & $9.31$ \\
            $1/32$  & $2.2706\rm{e}-3$  & $2.8876$  & $5.9990\rm{e}-3$  & $2.1012$  & $6.19$ \\ 
            $1/64$  & $5.3284\rm{e}-4$  & $2.0913$  & $1.4732\rm{e}-3$  & $2.0258$  & $4.12$ \\
            $1/128$ & $1.3197\rm{e}-4$  & $2.0135$  & $3.6695\rm{e}-4$  & $2.0053$  & $3.02$ \\
            \hline 
            \hline
            \multicolumn{6}{c}{BDF2-AB2 algorithm} \\
            \hline
            $\Delta t = h$ & $\| z^+ - z^{+,h} \|_{\infty,0}$ & Rate & $\| z^- - z^{-,h} \|_{\infty,0}$ & Rate \\ [0.5ex] 
            \hline 
            $1/16$  & $4.2726\rm{e}-2$  &           & $6.5633\rm{e}-2$  &          \\
            $1/32$  & $1.0623\rm{e}-2$  & $2.0079$  & $2.2256\rm{e}-2$  & $1.5602$ \\ 
            $1/64$  & $9.7834\rm{e}-3$  & $0.1188$  & $7.9360\rm{e}-3$  & $1.4877$ \\
            $1/128$ & $5.0295\rm{e}-3$  & $0.9600$  & $3.2317\rm{e}-3$  & $1.2961$ \\
            \hline\hline
            \end{tabular}
            \caption{Error and rate of convergence with $B_\circ = (0,0)$}
            \label{table:2D-wave-B00}
        \end{table}


        \begin{table}[H]
            \centering 
            \begin{tabular}{c c c c c c }
            \hline\hline 
            \multicolumn{6}{c}{PIM algorithm} \\
            \hline 
            $\Delta t = h$ & $\| z^+ - z^{+,h} \|_{\infty,0}$ & Rate & $\| z^- - z^{-,h} \|_{\infty,0}$ & Rate & Average iteration \\ [0.5ex] 
            \hline 
            $1/16$  & $1.0656\rm{e}-2$  &           & $1.5860\rm{e}-2$  &          & $9.44$ \\
            $1/32$  & $2.0089\rm{e}-3$  & $2.4072$  & $3.3290\rm{e}-3$  & $2.2523$ & $6.09$ \\ 
            $1/64$  & $5.1407\rm{e}-4$  & $1.9663$  & $7.9070\rm{e}-4$  & $2.0739$ & $4.05$ \\
            $1/128$ & $1.2962\rm{e}-4$  & $1.9877$  & $1.9466\rm{e}-4$  & $2.0739$ & $3.01$ \\
            \hline 
            \hline
            \multicolumn{6}{c}{BDF2-AB2 algorithm} \\
            \hline
            $\Delta t = h$ & $\| z^+ - z^{+,h} \|_{\infty,0}$ & Rate & $\| z^- - z^{-,h} \|_{\infty,0}$ & Rate \\ [0.5ex] 
            \hline 
            $1/16$  & $2.5927\rm{e}-2$  &           & $4.0365\rm{e}-2$  &           \\
            $1/32$  & $5.1292\rm{e}-3$  & $2.3376$  & $1.0111\rm{e}-2$  & $1.9972$  \\ 
            $1/64$  & $1.2879\rm{e}-3$  & $1.9937$  & $2.4751\rm{e}-3$  & $2.0304$  \\
            $1/128$ & $3.2148\rm{e}-4$  & $2.0023$  & $6.0824\rm{e}-4$  & $2.0247$  \\
            \hline\hline
            \end{tabular}
            \caption{Error and rate of convergence with $B_\circ = (1,1)$}
            \label{table:2D-wave-B11}
        \end{table}


        \begin{table}[H]
            \centering 
            \begin{tabular}{c c c c c c }
            \hline\hline 
            \multicolumn{6}{c}{PIM algorithm} \\
            \hline 
            $\Delta t = h$ & $\| z^+ - z^{+,h} \|_{\infty,0}$ & Rate & $\| z^- - z^{-,h} \|_{\infty,0}$ & Rate & Average iteration \\ [0.5ex] 
            \hline 
            $1/16$  & $4.0128\rm{e}-3$  &           & $3.7020\rm{e}-3$  &           & $7.00$    \\
            $1/32$  & $1.0391\rm{e}-3$  & $2.0952$  & $1.0789\rm{e}-3$  & $2.1519$  & $5.91$ \\ 
            $1/64$  & $3.6672\rm{e}-4$  & $2.2101$  & $3.5414\rm{e}-4$  & $2.1153$  & $3.75$ \\
            $1/128$ & $1.1268\rm{e}-4$  & $1.9066$  & $1.1073\rm{e}-4$  & $1.8706$  & $3.02$ \\
            \hline 
            \hline
            \multicolumn{6}{c}{BDF2-AB2 algorithm} \\
            \hline
            $\Delta t = h$ & $\| z^+ - z^{+,h} \|_{\infty,0}$ & Rate & $\| z^- - z^{-,h} \|_{\infty,0}$ & Rate \\ [0.5ex] 
            \hline 
            $1/16$  & $7.2273\rm{e}-3$  &           & $5.6705\rm{e}-3$  &           \\
            $1/32$  & $1.0877\rm{e}-3$  & $2.7322$  & $1.0801\rm{e}-3$  & $2.3923$  \\ 
            $1/64$  & $1.9431\rm{e}-4$  & $2.4849$  & $2.2144\rm{e}-4$  & $2.2862$  \\
            $1/128$ & $4.7211\rm{e}-5$  & $2.0412$  & $5.5349\rm{e}-5$  & $2.0003$  \\
            \hline\hline
            \end{tabular}
            \caption{Error and rate of convergence with $B_\circ = (10,10)$}
            \label{table:2D-wave-B1010}
        \end{table}


\subsection{Hartmann Flows
}
\label{subsec:Hartmann}
The 2D Hartmann Flows problem on the domain $\Omega = [0,L] \times [-1,1]$ has the following exact solutions \cite{MR2289481}
\begin{align}
    u &= (u_1(y),0), \qquad B = (b_1(y),M), \qquad B_\circ = (0,M) ,
    \label{eq:Hart_Flow} \\
    u_1(y) & = \frac{G}{\nu \cdot Ha \cdot \tanh(Ha)} \Big(1 - \frac{\cosh(y \cdot Ha)}{\cosh(Ha)} \Big), \notag  \\
    b_1(y) & = \frac{G}{S} \Big( \frac{\sinh(y \cdot Ha)}{\sinh(Ha)} - y \Big),\qquad 
    p(x,y) = - G x - \frac{1}{2} S b_1^2(y), \notag 
\end{align}
where $M>0$ is the parameter of external magnetic field, $Ha>0$ is the Hartmann number and $L,G,S>0$ are other parameters of the problems. 
We first verify the convergence rate of PIM algorithm by setting $L = 6$, $G = S = 1$, $Ha = 5$, $\nu = 0.1 = \nu_{m}$ and required tolerance $\tt{Tol} = 1.\rm{e}-6$ for the fixed point iteration.
It is easy to check that the MHD equations in \eqref{eq:NSE-MHD} with steady solutions 
in \eqref{eq:Hart_Flow} is a conservative system. 
We simulate the problem by the constant time-stepping PIM algorithm with four values of $B_{\circ}$: $(0,1)$, $(0,10)$, $(0,100)$, $(0,1000)$ over the time interval $[0,1]$ to confirm the rate of convergence.
The two initial solutions and boundary conditions are 
compatible with 
the exact solutions in \eqref{eq:Hart_Flow}.
From \Cref{table:Hartmann-PIM}, we observe that the constant time-stepping PIM algorithm has achieved a super convergence rate in $L^{2}$-norm. 
For the long-time conservation, we compare the PIM algorithm
with the
 BDF2-AB2 algorithm in \eqref{eq:BDF2-AB2} 
over the time interval $[0,10]$.
We set the diameter $h = L/128$, and the constant time step $\Delta t = 1/128$. 
From \Cref{fig:ErrorENB1,fig:ErrorENB10}, 
we see that 
the performance of the BDF2-AB2 algorithm is 
slightly 
better than that of the PIM algorithm for a small value of $M$ ($M=1, 10$) in terms of kinetic energy $\mathcal{E}_{n}$.
However, as the value of $M$ increases, we find that the results of the two algorithms are very close
: the error of the PIM algorithm oscillates around that of the BDF2-AB2 algorithm in \Cref{fig:ErrorENB100,fig:ErrorENB1000}. 
We observe similar situations for the error of cross helicity $\mathcal{H}_{C_{n}}$ 
in \Cref{fig:ErrorCH_Const}.

\begin{table}[H]
    \centering 
    \begin{tabular}{c c c c c c }
    \hline\hline 
    \multicolumn{6}{c}{$B_{\circ} = (0,1)$} \\
    \hline 
    $\Delta t = h$ & $\| z^+ - z^{+,h} \|_{\infty,0}$ & Rate & $\| z^- - z^{-,h} \|_{\infty,0}$ & Rate & Average iteration \\ [0.5ex] 
    \hline 
    $1/16$  & $3.5146\rm{e}-3$  &           & $3.5166\rm{e}-3$  &           & $2.87$ \\
    $1/32$  & $4.2923\rm{e}-4$  & $3.0336$  & $4.2927\rm{e}-4$  & $3.0343$  & $2$ \\ 
    $1/64$  & $5.1377\rm{e}-5$  & $3.0625$  & $5.1377\rm{e}-5$  & $3.0627$  & $1.37$ \\
    $1/128$ & $5.7723\rm{e}-5$  & $3.2866$  & $6.0922\rm{e}-5$  & $3.3215$  & $1$ \\
    \hline 
    \hline
    \multicolumn{6}{c}{$B_{\circ} = (0,10)$} \\
    \hline
    $\Delta t = h$ & $\| z^+ - z^{+,h} \|_{\infty,0}$ & Rate & $\| z^- - z^{-,h} \|_{\infty,0}$ & Rate & Average iteration \\ [0.5ex] 
    \hline 
    $1/16$  & $8.8998\rm{e}-3$  &           & $8.9317\rm{e}-3$  &           & $3$ \\
    $1/32$  & $7.8380\rm{e}-4$  & $3.5052$  & $7.8431\rm{e}-4$  & $3.5094$  & $2$ \\ 
    $1/64$  & $6.7809\rm{e}-5$  & $3.5309$  & $6.7806\rm{e}-5$  & $3.5319$  & $1.89$ \\
    $1/128$ & $6.8948\rm{e}-6$  & $3.2979$  & $6.8948\rm{e}-6$  & $3.2978$  & $1$ \\
    \hline\hline
    \multicolumn{6}{c}{$B_{\circ} = (0,100)$} \\
    \hline
    $\Delta t = h$ & $\| z^+ - z^{+,h} \|_{\infty,0}$ & Rate & $\| z^- - z^{-,h} \|_{\infty,0}$ & Rate & Average iteration \\ [0.5ex] 
    \hline 
    $1/16$  & $3.5855\rm{e}-2$  &           & $3.5794\rm{e}-2$  &           & $3.07$ \\
    $1/32$  & $4.9008\rm{e}-3$  & $2.8711$  & $4.9041\rm{e}-3$  & $2.8676$  & $2.19$ \\ 
    $1/64$  & $4.4118\rm{e}-4$  & $3.4736$  & $4.4140\rm{e}-4$  & $3.4738$  & $2$ \\
    $1/128$ & $3.1643\rm{e}-5$  & $3.8014$  & $3.1652\rm{e}-5$  & $3.8017$  & $2$ \\
    \hline\hline 
    \multicolumn{6}{c}{$B_{\circ} = (0,1000)$} \\
    \hline
    $\Delta t = h$ & $\| z^+ - z^{+,h} \|_{\infty,0}$ & Rate & $\| z^- - z^{-,h} \|_{\infty,0}$ & Rate & Average iteration \\ [0.5ex] 
    \hline 
    $1/16$  & $6.5272\rm{e}-2$  &           & $6.5278\rm{e}-2$ &           & $3$ \\
    $1/32$  & $1.2988\rm{e}-2$  & $2.3293$  & $1.2982\rm{e}-2$ & $2.3301$  & $3$ \\ 
    $1/64$  & $2.0417\rm{e}-3$  & $2.6693$  & $2.0417\rm{e}-3$ & $2.6686$  & $2$ \\
    $1/128$ & $2.5951\rm{e}-4$  & $2.9759$  & $2.5959\rm{e}-4$ & $2.9755$  & $2$ \\
    \hline\hline 
    \end{tabular}
    \caption{Error and rate of convergence for PIM}
    \label{table:Hartmann-PIM}
\end{table}

\begin{confidential}
    \color{darkblue}
    \begin{table}[H]
        \centering 
        \begin{tabular}{c c c c c }
        \hline\hline 
        \multicolumn{5}{c}{$B_{\circ} = (0,1)$} \\
        \hline 
        $\Delta t = h$ & $\| z^+ - z^{+,h} \|_{\infty,0}$ & Rate & $\| z^- - z^{-,h} \|_{\infty,0}$ & Rate  \\ [0.5ex] 
        \hline 
        $1/16$  & $3.3159\rm{e}-3$  &           & $3.3176\rm{e}-3$  &            \\
        $1/32$  & $4.0882\rm{e}-4$  & $3.0199$  & $4.0883\rm{e}-4$  & $3.0206$   \\ 
        $1/64$  & $5.0317\rm{e}-5$  & $3.0223$  & $5.0317\rm{e}-5$  & $3.0224$   \\
        $1/128$ & $6.2518\rm{e}-6$  & $3.0087$  & $6.2518\rm{e}-6$  & $3.0087$   \\
        \hline 
        \hline
        \multicolumn{5}{c}{$B_{\circ} = (0,10)$} \\
        \hline
        $\Delta t = h$ & $\| z^+ - z^{+,h} \|_{\infty,0}$ & Rate & $\| z^- - z^{-,h} \|_{\infty,0}$ & Rate  \\ [0.5ex] 
        \hline 
        $1/16$  & $7.0822\rm{e}-3$  &           & $7.1115\rm{e}-3$  &            \\
        $1/32$  & $6.7629\rm{e}-4$  & $3.3885$  & $6.7685\rm{e}-4$  & $3.3933$   \\ 
        $1/64$  & $6.3002\rm{e}-5$  & $3.4242$  & $6.3000\rm{e}-5$  & $3.4254$   \\
        $1/128$ & $6.7196\rm{e}-6$  & $3.2290$  & $6.7195\rm{e}-6$  & $3.2289$   \\
        \hline\hline
        \multicolumn{5}{c}{$B_{\circ} = (0,100)$} \\
        \hline
        $\Delta t = h$ & $\| z^+ - z^{+,h} \|_{\infty,0}$ & Rate & $\| z^- - z^{-,h} \|_{\infty,0}$ & Rate  \\ [0.5ex] 
        \hline 
        $1/16$  & $1.9757\rm{e}-2$  &           & $1.9707\rm{e}-2$  &            \\
        $1/32$  & $2.9637\rm{e}-3$  & $2.7369$  & $2.9674\rm{e}-3$  & $2.7314$   \\ 
        $1/64$  & $3.0515\rm{e}-4$  & $3.2798$  & $3.0570\rm{e}-4$  & $3.2790$   \\
        $1/128$ & $2.4215\rm{e}-5$  & $3.6556$  & $2.4222\rm{e}-5$  & $3.6577$   \\
        \hline\hline 
        \multicolumn{5}{c}{$B_{\circ} = (0,1000)$} \\
        \hline
        $\Delta t = h$ & $\| z^+ - z^{+,h} \|_{\infty,0}$ & Rate & $\| z^- - z^{-,h} \|_{\infty,0}$ & Rate  \\ [0.5ex] 
        \hline 
        $1/16$  & $3.4083\rm{e}-2$  &           & $3.4119\rm{e}-2$ &            \\
        $1/32$  & $6.7094\rm{e}-3$  & $2.3448$  & $6.7064\rm{e}-3$ & $2.3469$   \\ 
        $1/64$  & $1.0632\rm{e}-3$  & $2.6578$  & $1.0631\rm{e}-3$ & $2.6573$   \\
        $1/128$ & $1.4217\rm{e}-4$  & $2.9027$  & $1.4223\rm{e}-4$ & $2.9019$   \\
        \hline\hline 
        \end{tabular}
        \caption{Error and rate of convergence for BDF2}
        \label{table:Hartmann-BDF2}
    \end{table}

    \normalcolor
\end{confidential}

\begin{figure}
    \centering
    \subfigure[Error of $\mathcal{E}_{n} = \frac{1}{2} \big( \| u \|^{2} + \| B \|^{2} \big)$ with $B_{\circ}=(0,1)$]{\label{fig:ErrorENB1}
        \includegraphics[width=5.0in,height=1.30in]{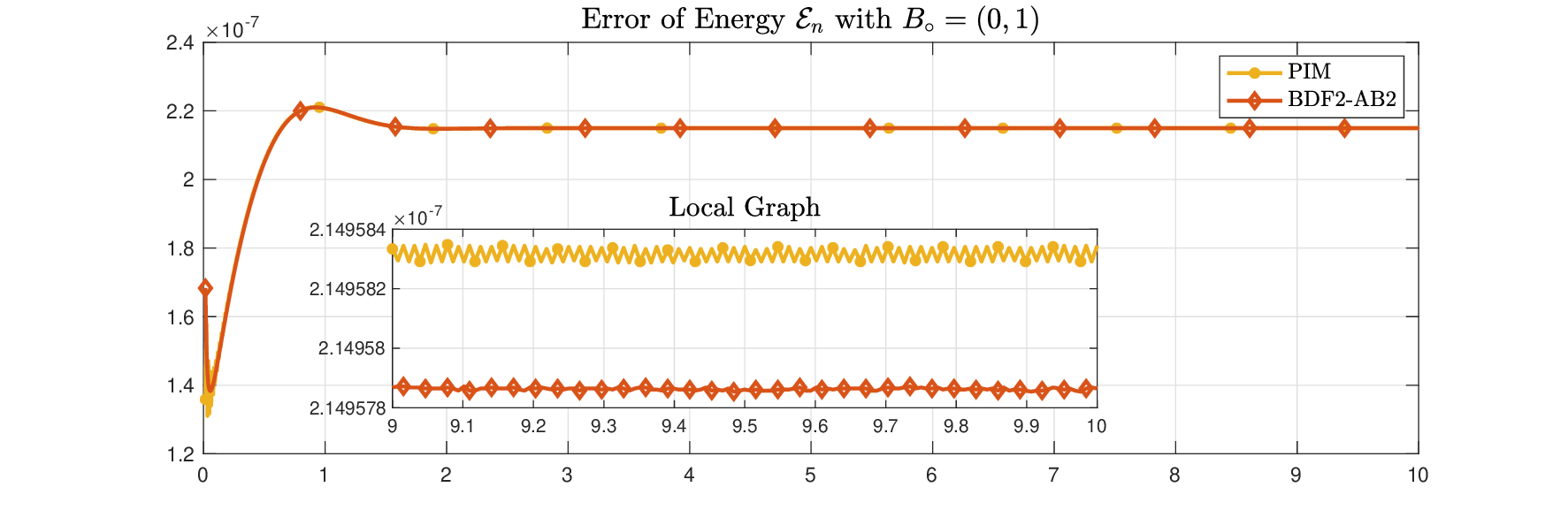}
    } \\ \vspace{0.002cm}
    \subfigure[Error of $\mathcal{E}_{n} = \frac{1}{2} \big( \| u \|^{2} + \| B \|^{2} \big)$ with $B_{\circ}=(0,10)$]{\label{fig:ErrorENB10}
        \includegraphics[width=5.0in,height=1.30in]{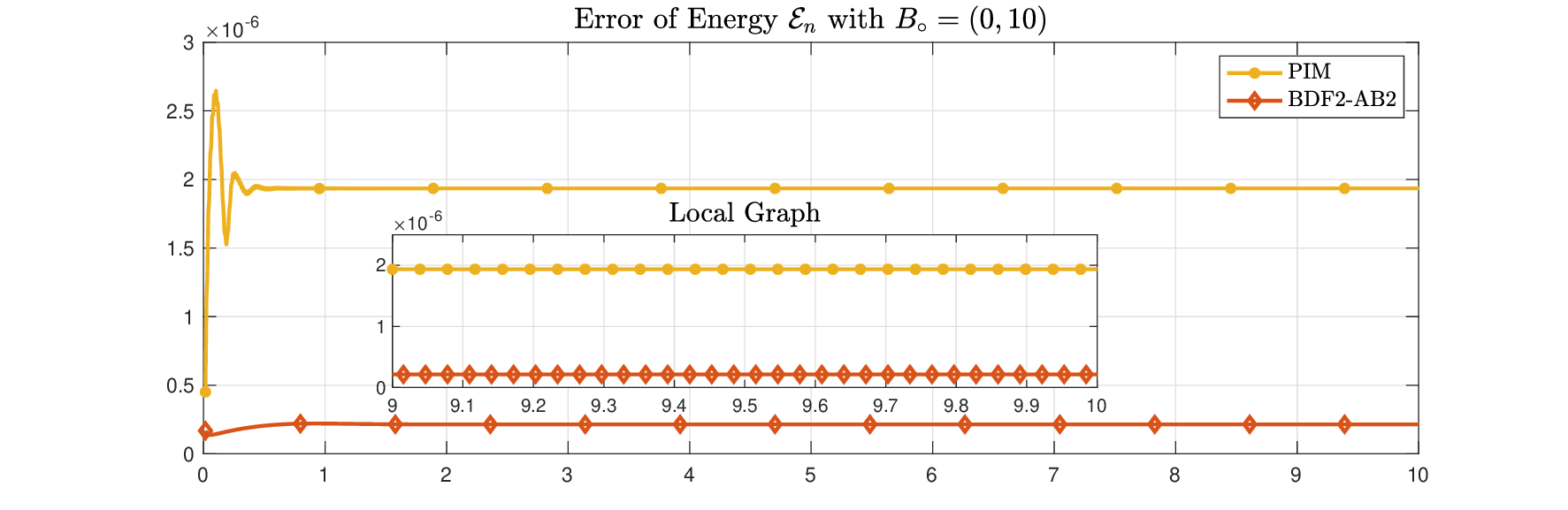}
    } \\ \vspace{0.002cm}
    \subfigure[Error of $\mathcal{E}_{n} = \frac{1}{2} \big( \| u \|^{2} + \| B \|^{2} \big)$ with $B_{\circ}=(0,100)$]{\label{fig:ErrorENB100}
        \includegraphics[width=5.0in,height=1.30in]{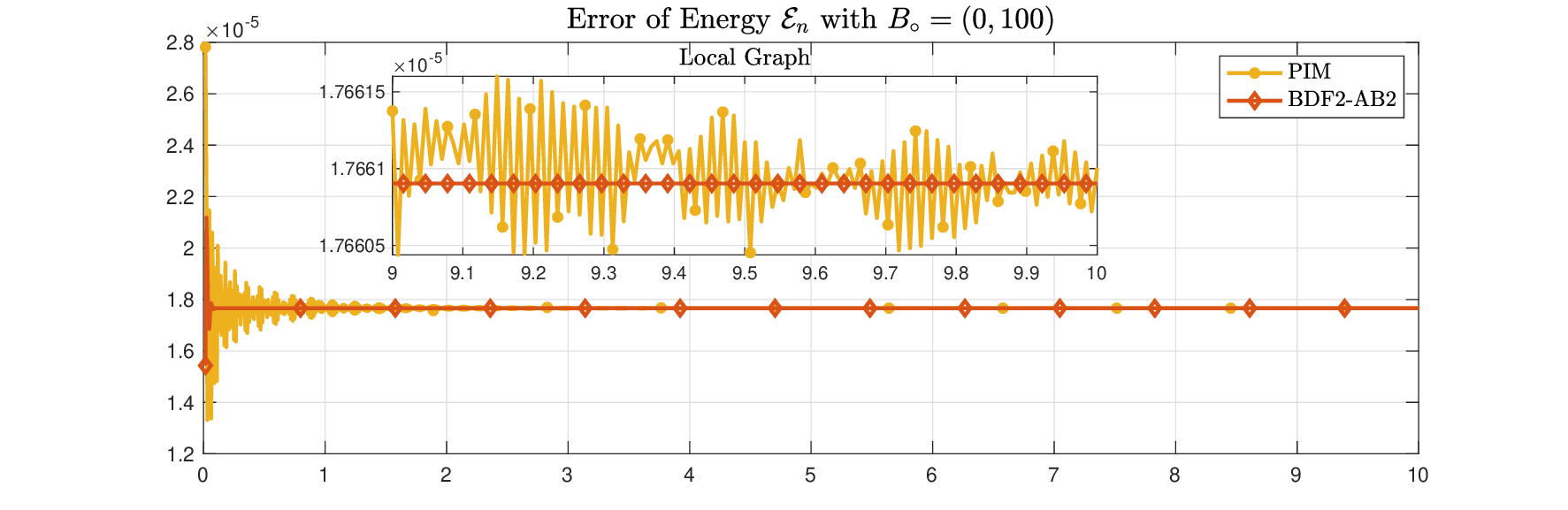}
    } \\ \vspace{0.002cm}
    \subfigure[Error of $\mathcal{E}_{n} = \frac{1}{2} \big( \| u \|^{2} + \| B \|^{2} \big)$ with $B_{\circ}=(0,1000)$]{\label{fig:ErrorENB1000}
        \includegraphics[width=5.0in,height=1.30in]{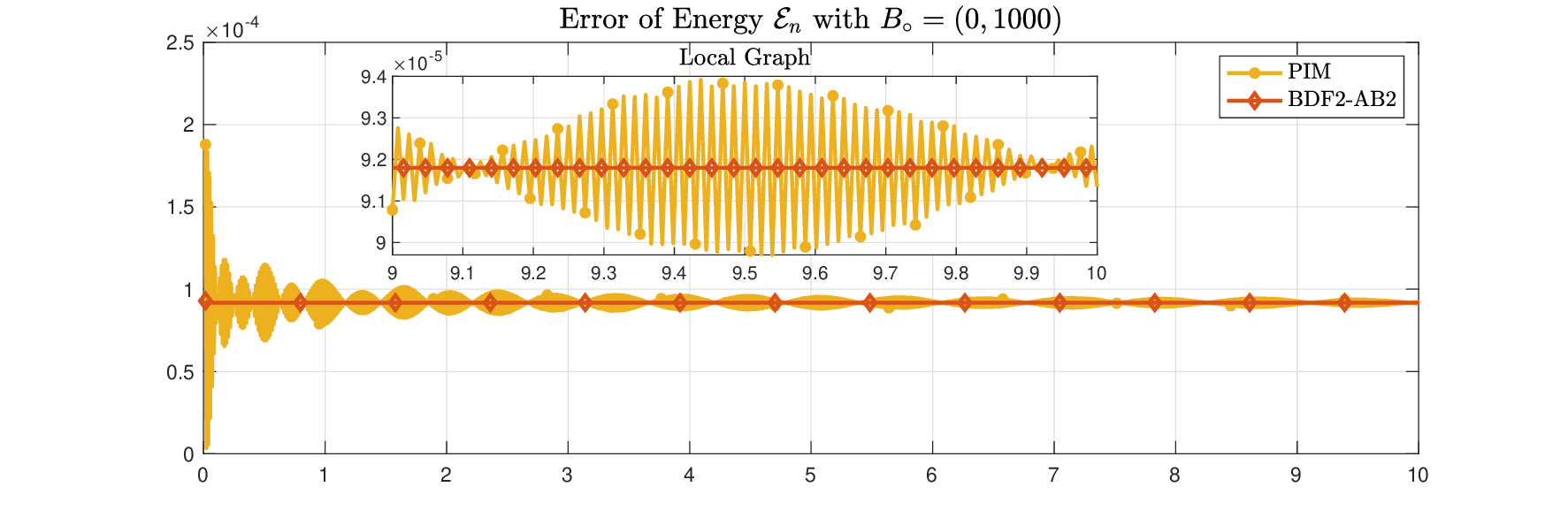}
    }
    \vspace{-0.3cm}
    \caption{The performance of the BDF2-AB2 algorithm is a 
    slightly
    better than that of the PIM algorithm for a small value of $M$ ($M=1, 10$) in terms of kinetic energy $\mathcal{E}_{n}$.
    However, as the value of $M$ increases, we find that the results of the two algorithms are very close
    : the error of the PIM algorithm oscillates around that of the BDF2-AB2 algorithm.
    }
    \label{fig:ErrorEN_Const}
\end{figure}

\begin{figure}
    \centering
    \subfigure[Error of $\mathcal{H}_{C_n}$ with $B_{\circ}=(0,1)$]{\label{fig:ErrorCHB1}
        \includegraphics[width=5.0in,height=1.30in]{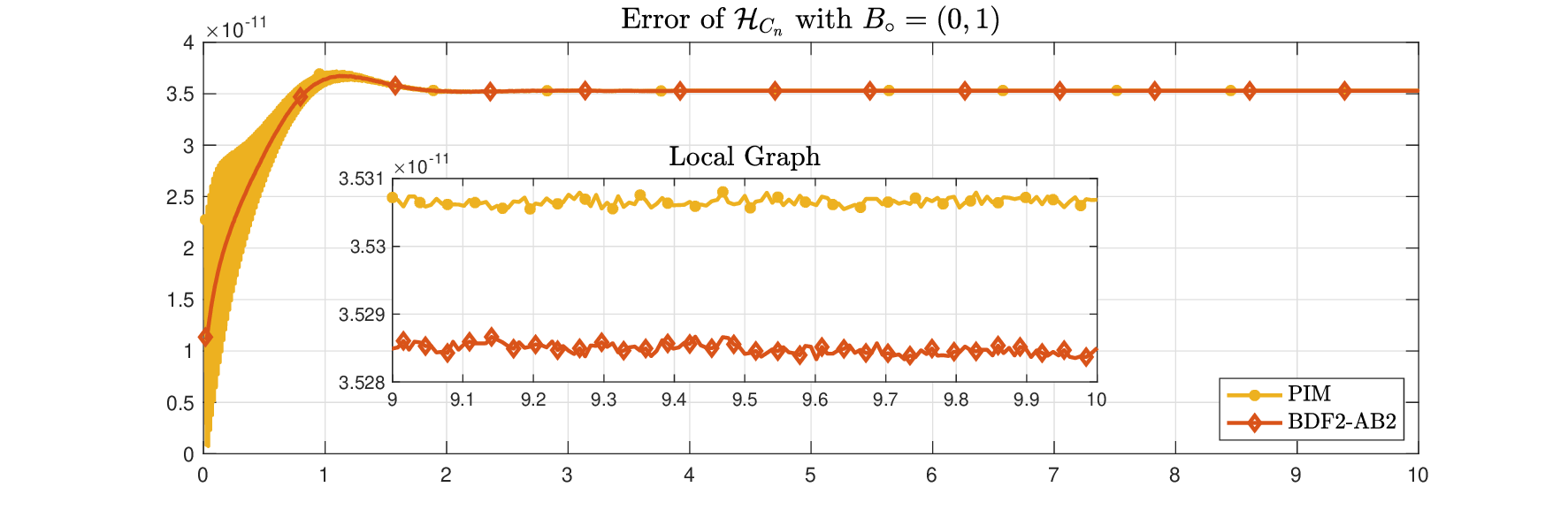}
    } \\ \vspace{0.002cm}
    \subfigure[Error of $\mathcal{H}_{C_n}$ with $B_{\circ}=(0,10)$]{\label{fig:ErrorCHB10}
        \includegraphics[width=5.0in,height=1.30in]{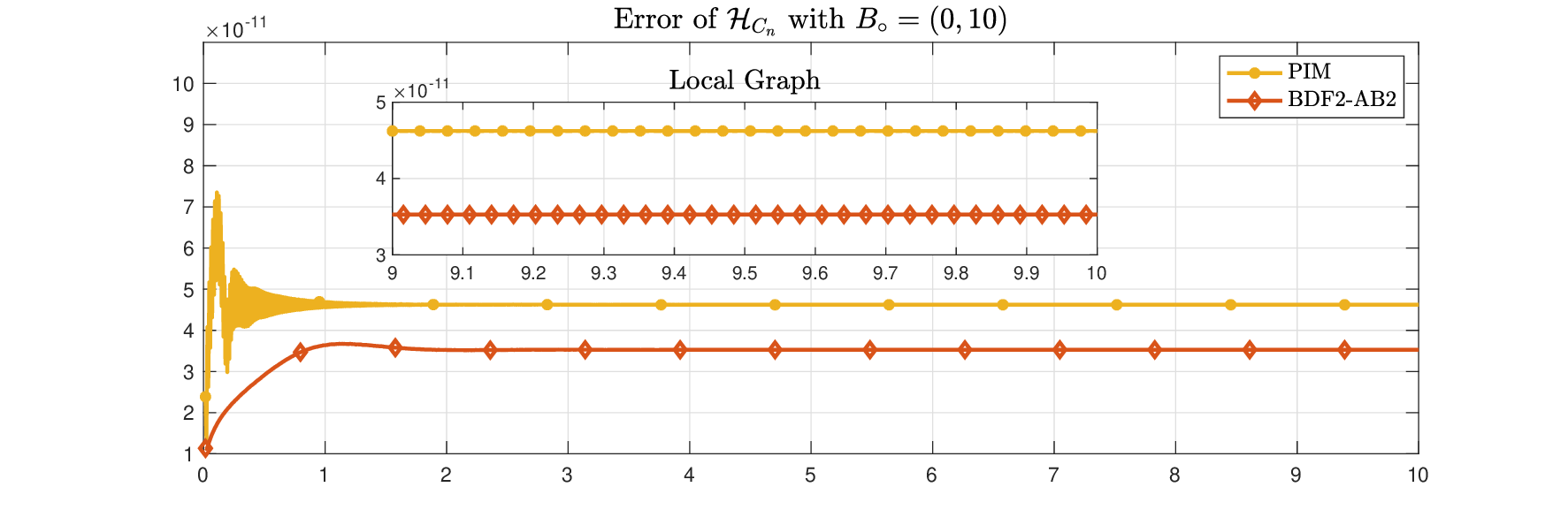}
    } \\ \vspace{0.002cm}
    \subfigure[Error of $\mathcal{H}_{C_n}$ with $B_{\circ}=(0,100)$]{\label{fig:ErrorCHB100}
        \includegraphics[width=5.0in,height=1.30in]{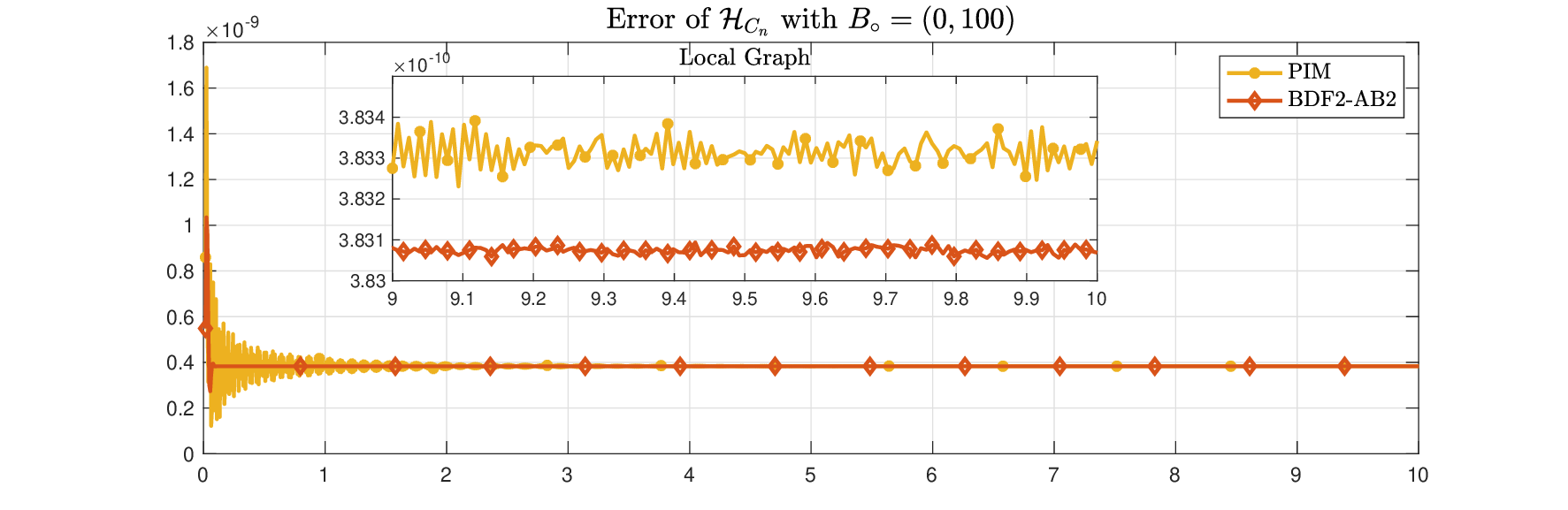}
    } \\ \vspace{0.002cm}
    \subfigure[Error of $\mathcal{H}_{C_n}$ with $B_{\circ}=(0,1000)$]{\label{fig:ErrorCHB1000}
        \includegraphics[width=5.0in,height=1.30in]{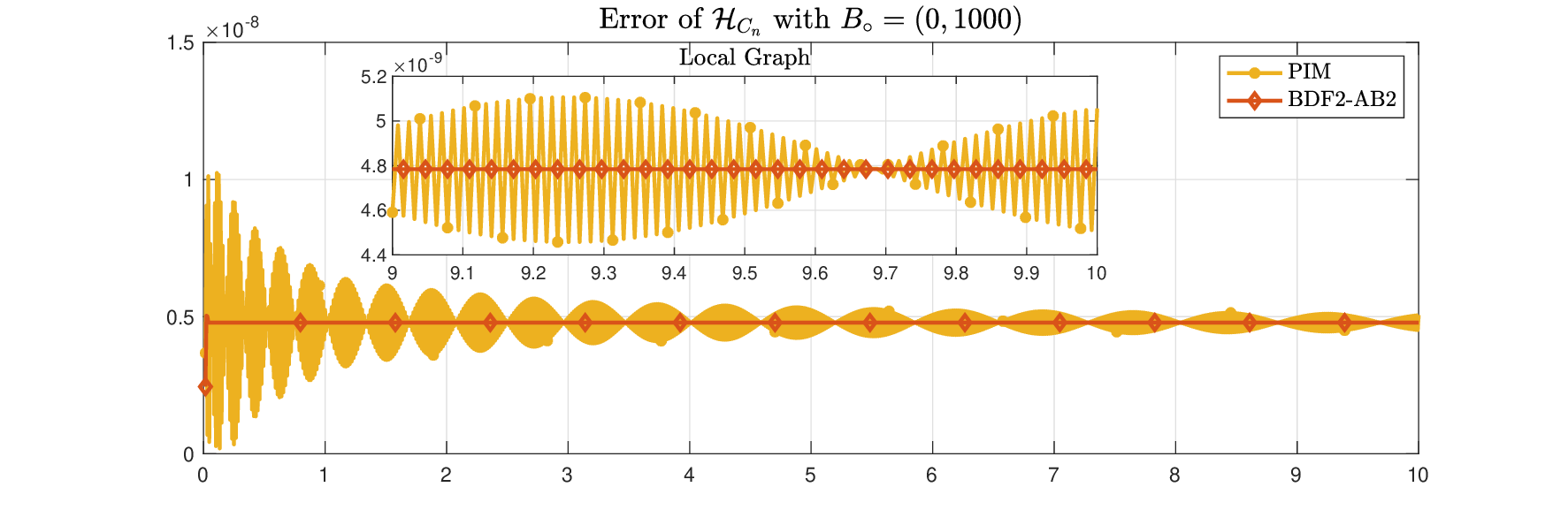}
    }
    \vspace{-0.3cm}
    \caption{The error 
    in the cross helicity $\mathcal{H}_{C_{n}}$ 
    behaves similarly
    to 
    the errors in the kinetic energy, for both algorithms.
    }
    \label{fig:ErrorCH_Const}
\end{figure}

\subsection{Adaptive Test}
In this subsection, we will show that the adaptive algorithm (Algorithm \eqref{alg:Adap-DLN}) has 
certain
advantages over the corresponding constant time-stepping algorithm in one highly stiff test problems. 
We revise the Hartmann flow problem in \eqref{eq:Hart_Flow} by multiplying 
the exact solution \eqref{eq:Hart_Flow}
with the following time 
dependent 
function 
\begin{align*}
    & G(t) = e^{g_{1}(t)} \big[ \cos(g_{2}(t)) + \sin(g_{2}(t)) \big], \qquad
    t_{0} \leq t \leq T, \\
    &\begin{cases}
        g_{1}(t) = 10^{\omega} ( t + 2 e^{-t} -2) , \\
        g_{2}(t) = 10^{\omega} ( 1 - e^{-t} - te^{-1}) .
    \end{cases}
\end{align*}
Here $G(t)$ is the first component of the solution vector to an extremely stiff ordinary differential system proposed by Lindberg \cite{Lin74_BIT}.  
\begin{figure}[h]
    \includegraphics[width=13cm,height=5cm]{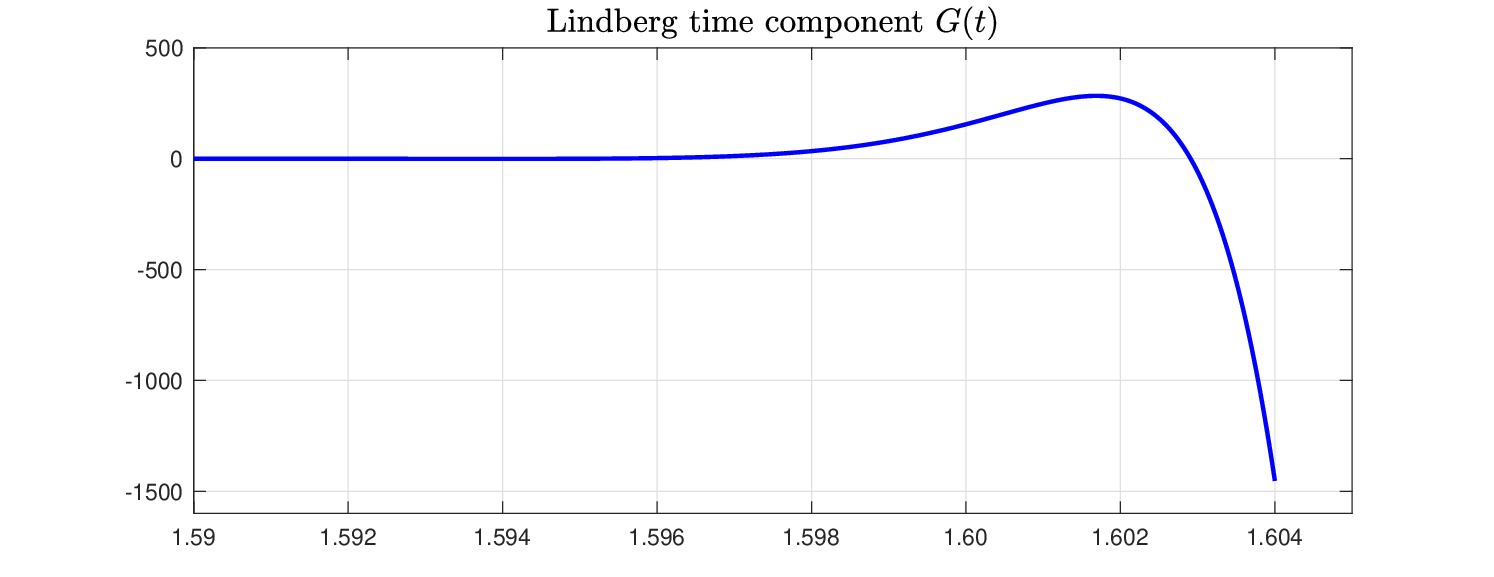}
    \caption{Time component function proposed by Lindberg}
    \label{fig:Time-component}
\end{figure}
The exact solutions of the revised Hartmann flow problem on $\Omega = [0,L] \times [-1,1]$ become
\begin{align}
    u &= (u_1(y),0), \qquad B = (b_1(y),M), \qquad B_\circ = (0,M) ,
    \label{eq:Hart_Flow_Unsteady} \\
    u_1(y) & = \frac{G}{\nu \cdot Ha \cdot \tanh(Ha)} \Big(1 - \frac{\cosh(y \cdot Ha)}{\cosh(Ha)} \Big) G(t), \notag  \\
    b_1(y) & = \frac{G}{S} \Big( \frac{\sinh(y \cdot Ha)}{\sinh(Ha)} - y \Big)G(t) ,\qquad 
    p(x,y) = \Big( - G x - \frac{1}{2} S b_1^2(y) \Big) G(t). \notag 
\end{align}
We set the parameters $L = 6$, $G = S = 1$, $Ha = 5$, $M = 100$, $\nu = 0.1 = \nu_{m}$, $\omega = 3.1$ and simulate the problem 
over the time interval $[1.59, 1.604]$. 
\Cref{fig:Time-component} displays the extreme stiffness of $G(t)$:
$G(t)$ increases rapidly from 0 ($t = 1.596$) to 300 ($t = 1.6015$) and then declines sharply to $-1400$ ($t = 1.604$). 
We set the diameter $h = L/80$,  required tolerance 
${\tt{Tol}} = 1.\rm{e}-4$ and the safety factor $\kappa = 0.95$.
The maximum time step is  
$\tau_{\rm{max}} = 1.\rm{e}-4$ for stability and the minimum time step $\tau_{\rm{min}} = 1.\rm{e}-6$ for efficiency.
Two initial time steps are selected to be the same as $\tau_{\rm{min}}$ for initial time accuracy. 
Initial conditions, boundary conditions and source functions are 
derived from the 
exact solutions. 
We verify the accuracy of the adaptive algorithm (Algorithm \eqref{alg:Adap-DLN}) by computing the error of energy 
in $u,B$
and the error of energy in the Els\"asser variables.
We also apply the corresponding constant time-stepping algorithms with the same number of time steps (342 steps) to confirm the efficiency of the time adaptivity in extremely stiff problems.

From \Cref{fig:KEPlot,fig:KEessPlot}, we observe that both algorithms obtain the true pattern in 
kinetic energy (KE) and kinetic energy in Els\"asser variables, even on the exceedingly stiff  regime time range ($t \in [1.603, 1.604]$). 
Meanwhile, \Cref{fig:CHPlot} shows that the cross helicity of 
both algorithms keeps at a higher level but displays an increasing trend, which is consistent with that of the exact solutions. 
\Cref{fig:Adapt_Error} confirms that the adaptive algorithm outperforms the corresponding constant time-stepping algorithm due to a much smaller error in kinetic energy at the end ($t = 1.604$). 
\Cref{fig:LTEPlot,fig:StepPlot} demonstrate that the adaptive algorithm (Algorithm \eqref{alg:Adap-DLN}) 
balances the efficiency and accuracy: the estimator of LTE $\widehat{T}_{n+1}$
at most time steps 
is 
close to but below the required tolerance ${\tt{Tol = 1.e-4}}$, 
so that  $k_{n}$ at most time steps 
is 
above $\tau_{\rm{min}}$. 
The adaptive algorithm (Algorithm \eqref{alg:Adap-DLN}) assigns more time steps to the highly stiff part ($t \geq 1.602$), which results in a smaller error in the kinetic energy 
at the end ($t = 1.604$).
From \Cref{fig:IterNumPlot}, both algorithms have relatively high computational costs on the extremely stiff time range ($t \in [1.603, 1.604]$) since the number of iterations at each time step increases from 2 to 6. 

\begin{figure}[ptbh]
    \centering
    \subfigure[log-plot of KE]{\label{fig:KEPlot}
        \includegraphics[width=5.0in,height=1.30in]{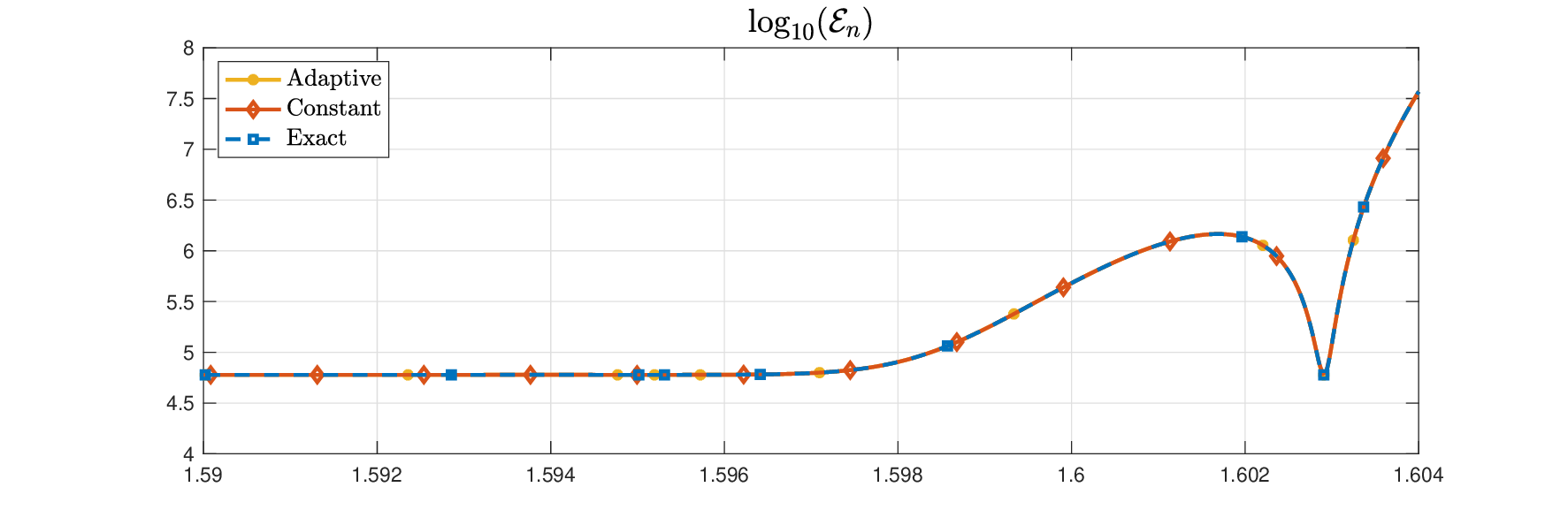}
    } \\ \vspace{0.005cm}
    \subfigure[log-plot of KE in Els\"asser variables]{\label{fig:KEessPlot}
        \includegraphics[width=5.0in,height=1.30in]{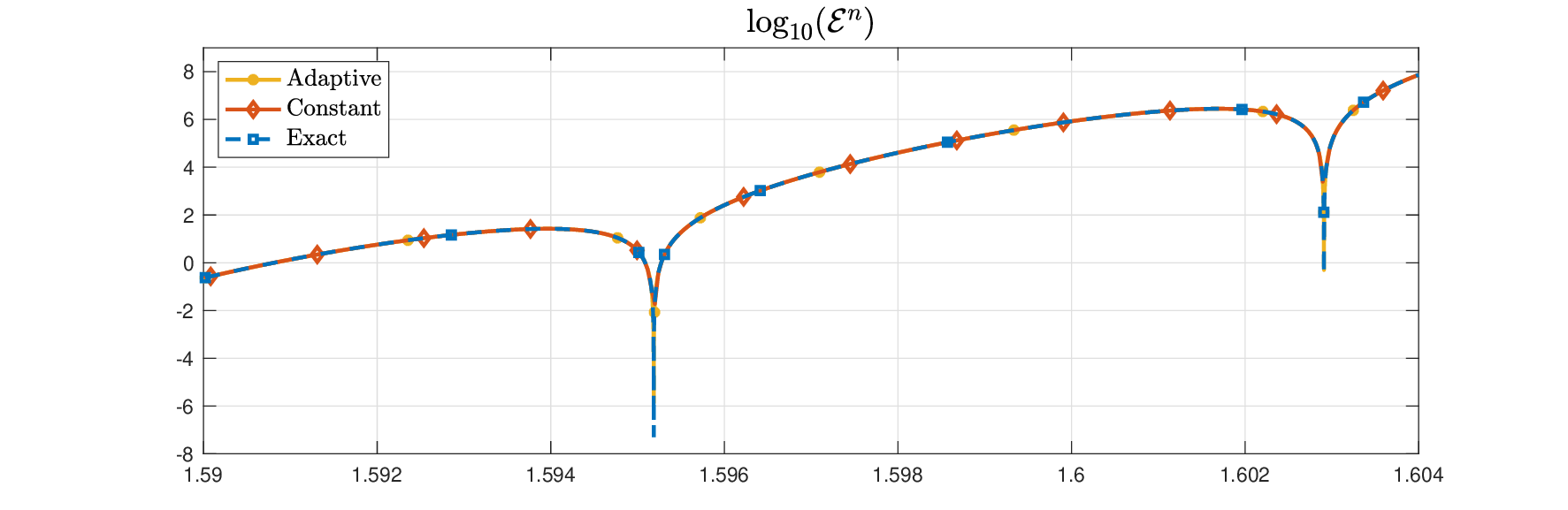}
    } \\ \vspace{0.005cm}
    \subfigure[log-plot of cross helicity]{\label{fig:CHPlot}
        \includegraphics[width=5.0in,height=1.30in]{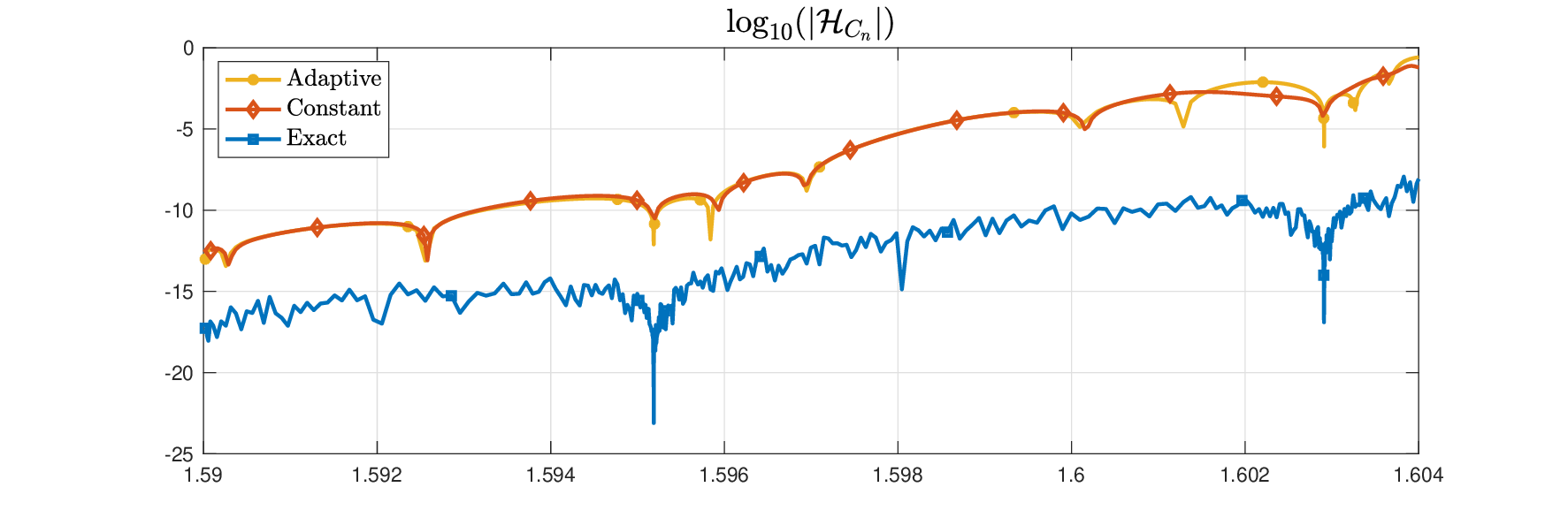}
    }
    \vspace{-0.3cm}
    \caption{Both algorithms obtain the true pattern in 
    kinetic energy (KE) and kinetic energy in Els\"asser variables, even on the extremely stiff time range. 
    Meanwhile cross helicity of 
    both algorithms keep at a higher level but display an increasing trend, which is consistent with that of the exact solutions.
    }
    \label{fig:Adapt_KECH}
\end{figure}

\begin{figure}[ptbh]
    \subfigure[Error of KE]{ \label{fig:ErrorENPlot}
        \hspace{-0.2cm}
        \begin{minipage}[t]{0.45\linewidth}
            \centering
            \includegraphics[width=2.6in,height=1.6in]{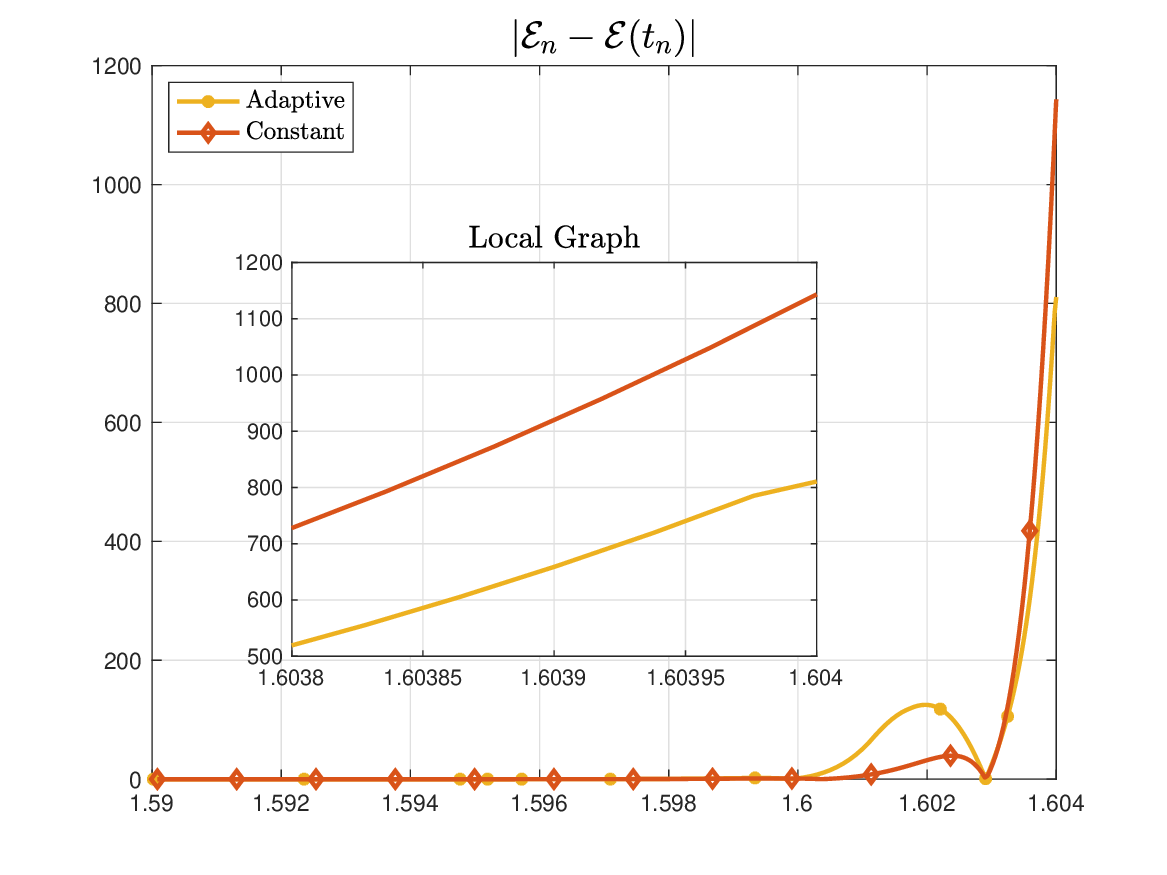}\\
        \end{minipage}
        \quad}%
    \subfigure[Error of KE in Els\"asser variables]{ \label{fig:ErrorENessPLot}
    \hspace{-0.2cm}
        \begin{minipage}[t]{0.45\linewidth}
            \centering
            \includegraphics[width=2.6in,height=1.6in]{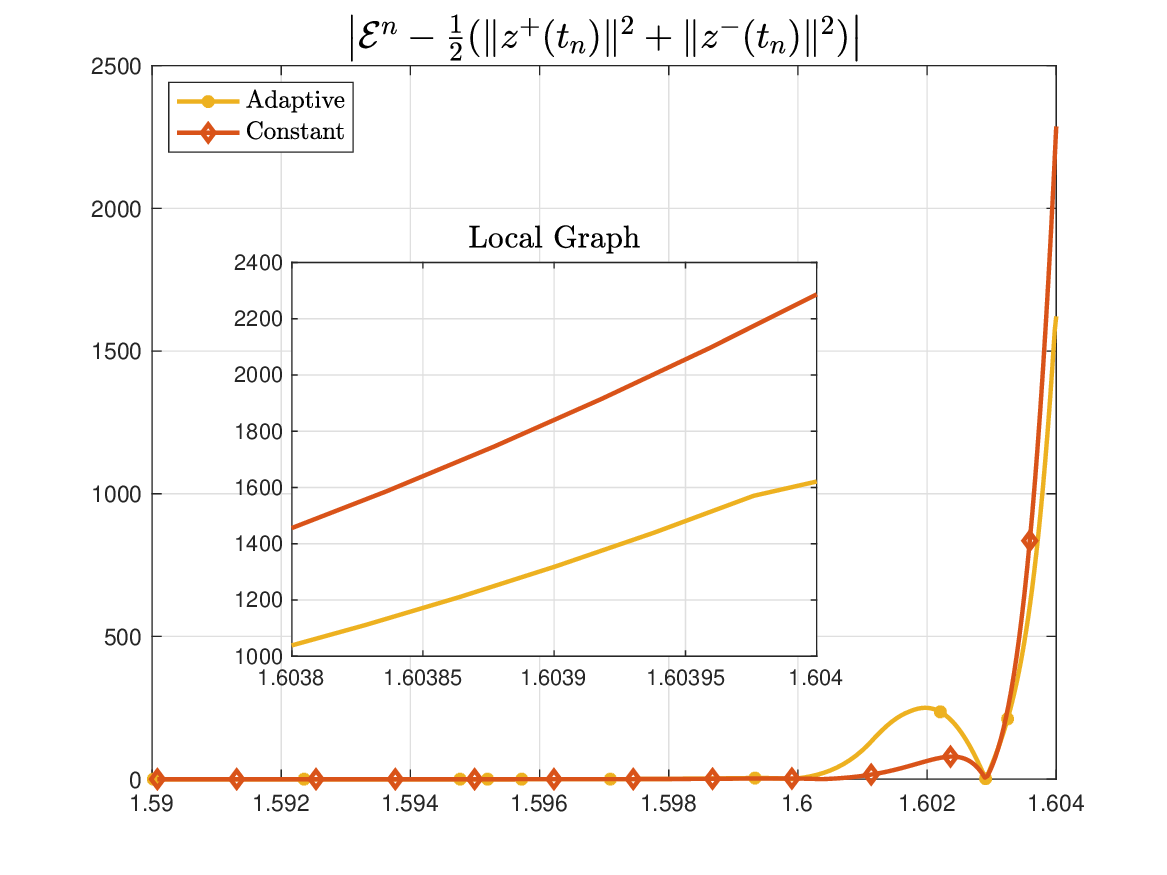}\\
    \end{minipage}}  	
    \par
    \centering
    \vspace{-0.3cm}
    \caption{The adaptive algorithm outperforms the corresponding constant time-stepping algorithm due to much smaller error in kinetic energy at the end ($t = 1.604$).
}
    \label{fig:Adapt_Error}
\end{figure}

\begin{figure}[ptbh]
    \centering
    \subfigure[Estimator of LTE]{\label{fig:LTEPlot}
        \includegraphics[width=5.0in,height=1.3in]{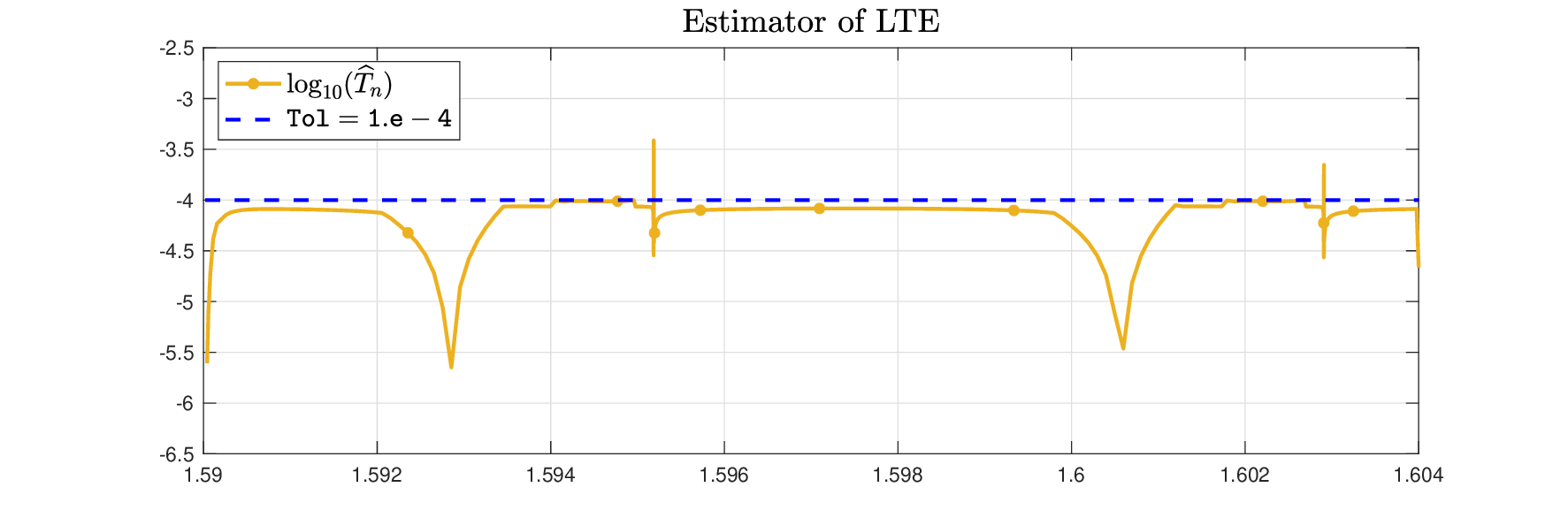}
    } \\ \vspace{0.01cm}
    \subfigure[Time step size]{\label{fig:StepPlot}
        \includegraphics[width=5.0in,height=1.3in]{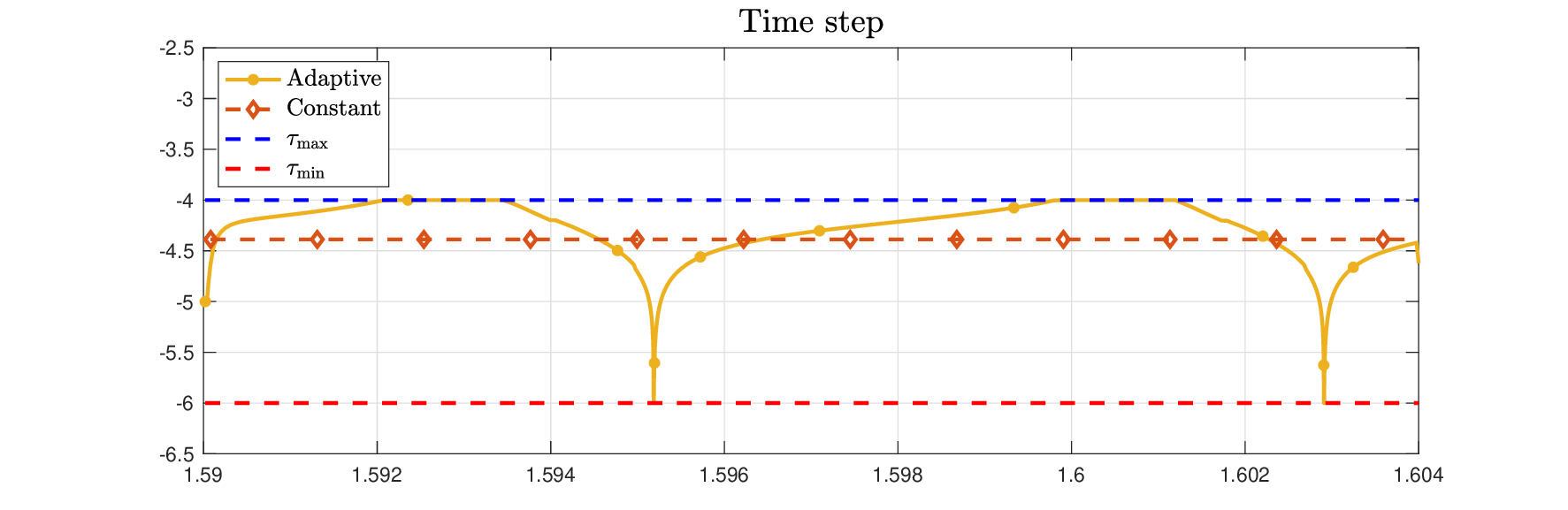}
    } \\ \vspace{0.01cm}
    \subfigure[Number of iteration at each time step]{\label{fig:IterNumPlot}
        \includegraphics[width=5.0in,height=1.3in]{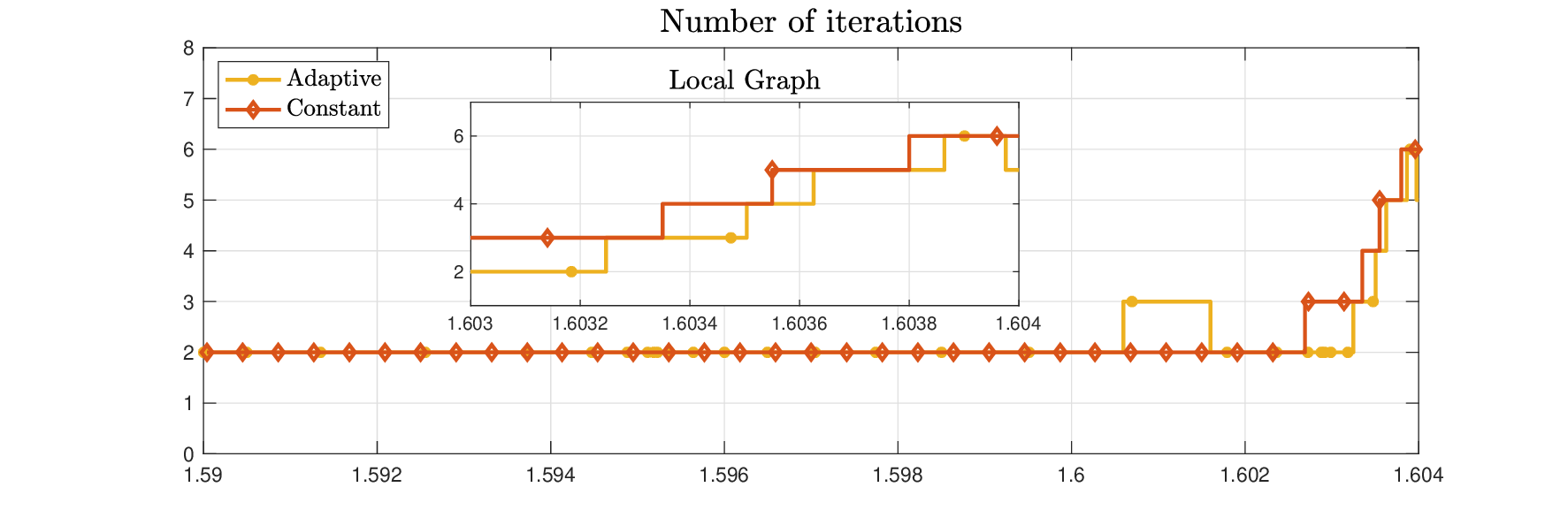}
    }
    \caption{
        The adaptive algorithm (Algorithm \eqref{alg:Adap-DLN}) 
        balances the efficiency and accuracy: the estimator of LTE $\widehat{T}_{n+1}$
        at most time steps 
        is close to, but below the required tolerance ${\tt{Tol = 1.e-4}}$, so that $\tau_{n}$ at most time steps 
        is above $\tau_{\rm{min}}$. 
        The adaptive algorithm (Algorithm \eqref{alg:Adap-DLN}) assigns more time steps to the highly stiff part ($t \geq 1.602$), which results in a smaller error in kinetic energy 
        at the end ($t = 1.604$).
        Both algorithms have relatively high computational costs on the extremely stiff time range ($t \in [1.603, 1.604]$) since the number of iterations at each time step increases from 2 to 6. 
    }
    \label{fig:Adapt_StepLTEIter}
\end{figure}

\section*{Acknowledgement}
We thank Professor Michael Neilan for many helpful discussions.
Catalin Trenchea is partially supported by the National Science Foundation under grant DMS-2208220.

\section{Conclusion} 
In this paper we developed an efficient fully-coupled implicit monolithic method for the MHD system in Els\"asser variables, in which the fully discrete solution is obtained by a partitioned iterative non-linear solver at each time step. 
We proved that the sequence of solutions by the partitioned iteration converges linearly to the solution of the method. 
We also showed that the proposed method has long-time conservation in the model energy, cross-helicity, magnetic helicity, and is second-order accurate under arbitrary time grids 
due to the time discretization of the midpoint rule.
A time-adaptive algorithm based on the local truncation error criterion was proposed, 
which balances time efficiency and accuracy. 
The first two numerical examples 
confirm the long-time conservation
of quadratic Hamiltonians, and the second-order accuracy of the proposed method, while the third 
stiff test problem demonstrates the advantage of time adaptivity.

\bibliographystyle{abbrv}
\bibliography{database2}

\end{document}